\newtheorem{theorem}{Theorem}
\theoremstyle{plain}
\newtheorem{lemma}[theorem]{Lemma}
\theoremstyle{plain}
\newtheorem{remark}[theorem]{Remark}
\theoremstyle{plain}
\theoremstyle{plain}
\newcommand{\hp}{s}
\newcommand{\dx}{\,dx}
\newcommand{\ds}{\,ds}
\newcommand{\dt}{\,dt}
\author{
Harald Garcke\footnotemark[1],
Michael Hinze\footnotemark[2],
Christian Kahle\footnotemark[3]
}
\date{\today}
\title{Optimal Control of time-discrete two-phase flow driven by a
diffuse-interface model~\footnotemark[4]
}
\begin{document}

\maketitle

\renewcommand{\thefootnote}{\fnsymbol{footnote}}

\footnotetext[1]{Fakult\"at f\"ur Mathematik, Universit\"at Regensburg, 93040 Regensburg, Germany
(\texttt {Harald.Garcke@mathematik.uni-regensburg.de}).}

\footnotetext[2]{Schwerpunkt Optimierung und Approximation, 
Fachbereich Mathematik, 
Universit\"at Hamburg, 
Bundesstrasse 55, 
20146 Hamburg,
Germany (\texttt {Michael.Hinze@uni-hamburg.de}).} 

\footnotetext[3]{Lehrstuhl f\"ur Optimalsteuerung, 
Zentrum Mathematik,
Technische Universit\"at M\"unchen, 
Garching bei M\"unchen, Germany 
(\texttt {Christian.Kahle@ma.tum.de}).}

\footnotetext[4]
{
The authors gratefully acknowledge the financial support by the 
Deutsche Forschungsgemeinschaft (DFG) through the grants GA695/6-2 (first author) and HI689/7-1
(second and third author) within the priority program SPP1506 ``Transport processes at fluidic
interfaces''.
The third author additionally gratefully acknowledges the support by the DFG through the
International Research Training Group IGDK 1754 ``Optimization and Numerical Analysis for Partial Differential Equations with Nonsmooth Structures''.
}

\renewcommand{\thefootnote}{\arabic{footnote}}

\begin{abstract}

We propose a general control framework for two-phase flows with variable
densities in the diffuse interface formulation, where the distribution of the
fluid components is described by a phase field.
The flow is governed by the diffuse interface model proposed in
[Abels, Garcke, Gr\"un,  M3AS 22(3):1150013(40), 2012].
On the basis of the stable time discretization proposed in
[Garcke, Hinze, Kahle,
APPL NUMER MATH, 99:151--171, 2016]
we derive necessary
optimality conditions for the time-discrete and the fully discrete optimal
control problem.
We present numerical examples with distributed and boundary controls, and also
consider the case, where the initial value of the phase field serves as control
variable.

\noindent \textbf{Keywords:} Optimal control, Boundary control, Initial value control,
Two-phase flow, Cahn--Hilliard, Navier--Stokes, Diffuse-interface models.
\end{abstract}

\section{Introduction}
In this paper we study a general discrete framework for control of two-phase
fluids governed by the thermodynamically consistent
diffuse interface model proposed in \cite{AbelsGarckeGruen_CHNSmodell}.
For the discretization we use the approach of
\cite{GarckeHinzeKahle_CHNS_AGG_linearStableTimeDisc}, where the
authors propose a time discretization scheme, that preserves this important
property in the time discrete setting and, using a post-processing step, also in
the fully discrete setting including adaptive mesh discretization.
As control actions we consider distributed control, Dirichlet boundary control,
and control with the initial condition of the phase field.

For the practical implementation we
adapt the adaptive treatment developed in \cite{GarckeHinzeKahle_CHNS_AGG_linearStableTimeDisc} to the optimal control
setting.
On the discrete level, special emphasis has to be be taken for the control with
the intial value of the phase field, since the distribution of its phases is an
outcome of the optimization procedure and thus a-priori unknown. In this case we
combine the variational discretization from \cite{HinzeVariDisk} with
error estimation techniques to find a good mesh
for the numerical representation of the a-priori unknown phase distribution.

Let us comment on related literature on time discretizations and control of
(two-phase) fluids.
For investigations of further time discretizations we refer to
\cite{Aland__time_integration_for_diffuse_interface,
Hintermueller_Keil_Wegner__OPT_CHNS_density,
Gruen_Klingbeil_CHNS_AGG_numeric,
GarckeHinzeKahle_CHNS_AGG_linearStableTimeDisc,
Tierra_Splitting_CHNS,
GuoLinLowengrub_numericalMethodForCHNS_Lowengrub,
GruenMetzger__CHNS_decoupled}.
Concerning optimal control and  feedback control of fluids there is a wide range
of literature available.
Here we only mention
\cite{Gunzburger_bndContr_NavierStokes,HinzeKunisch_BC_NS,
FursikovGunzburgerHou_BoundaryControlNS,
Berggren_flowControl_vorticityReduction,
Bewley_StabFlowChannel,Hinze_InstClosedLoopControlOfNS}.

Let us further comment on available literature for control of Cahn--Hilliard
multiphase flow systems.
In \cite{HintermuellerWegner_OptimContrCH}
distributed optimal control of the Cahn--Hilliard system with a non smooth 
double obstacle potential is proposed,
and in \cite{HintermuellerWegner_OptCHNS} this work is extended to time-discrete
two-phase flow given by a Cahn--Hilliard Navier--Stokes system with equal densities.  Both
works aim at existence of optimal controls and first order optimality
conditions.
In \cite{Hintermueller_Keil_Wegner__OPT_CHNS_density} the authors consider time
discrete optimal control of multiphase flows based on the diffuse interface model of \cite{AbelsGarckeGruen_CHNSmodell}.
This work aims at establishing existence of solutions and stationarity
conditions for control problems with free energies governed by the
double obstacle potential,
which is achieved through an appropriate limiting
process of control problems with smooth relaxed free energies.
The focus of the present work is different in that we consider numerical
analysis of the fully discrete problem, propose a tailored numerical adaptive
concept for the control problem, and present numerical examples which clearly
show the potential of our approach.

We also mention the work of
\cite{Banas_control_InterfaceEvolution_multiPhaseFlows},
where optimal control for a binary fluid, that is described by its density
distribution, is proposed.

Let us finally comment on feedback control approaches for multiphase flows.
Model predictive control is applied to the model from
\cite{AbelsGarckeGruen_CHNSmodell} in
\cite{HinzeKahle_instContr_IFAC_2013,Kahle_instContr_two_phase_OWR,kahle_dissertation}.

The paper is organized as follows. In Section \ref{sec:equs} we state the model
for the two-phase system and summarize assumptions that we require for the data.
In Section \ref{sec:TD} we state the time discretization scheme
proposed in
\cite{GarckeHinzeKahle_CHNS_AGG_linearStableTimeDisc} and summarize
properties of the scheme which we need in the present paper.
We formulate the time discrete optimization problem in Section
\ref{sec:TD}.
In Section \ref{sec:FullyDiscrete} we consider the optimal control problem in
the fully discrete setting and present numerical examples in
Section \ref{sec:num}.

\section{The governing equations}\label{sec:equs}

The two-phase flow is modeled by the diffuse interface model proposed in
\cite{AbelsGarckeGruen_CHNSmodell}.

\begin{align}
  \rho\partial_t v + \left( \left( \rho v + J\right)\cdot\nabla
  \right)v
  - \mbox{div}\left(2\eta Dv\right) + \nabla p &\nonumber\\
  -\mu\nabla \varphi - \rho K
  -f &= 0
  && \forall x\in\Omega,\, \forall t \in
  I,\label{eq:CHNSstrong1}\\
  -\mbox{div}(v) &= 0&&
  \forall x\in\Omega,\, \forall t \in I,\label{eq:CHNSstrong2}\\
  \partial_t \varphi + v \cdot\nabla \varphi - \mbox{div}(b\nabla \mu) &= 0&&
  \forall x\in\Omega,\, \forall t \in I,\label{eq:CHNSstrong3}\\
  -\sigma\epsilon \Delta \varphi + \frac{\sigma}{\epsilon}W'(\varphi) - \mu
  &= 0&& \forall x\in\Omega,\, \forall t \in I,\label{eq:CHNSstrong4}\\
  v(0,x) &= v_0(x)      &&    \forall x \in \Omega,\label{eq:CHNSstrongIC1}\\
  \varphi(0,x) &=  \varphi_0(x) &&\forall x \in
  \Omega,\label{eq:CHNSstrongIC2}\\
  v(t,x) &= g &&\forall x \in \partial \Omega,\, \forall t
  \in I,\label{eq:CHNSstrongBC1}\\
  \nabla \mu(t,x)\cdot \nu_\Omega =
  \nabla \varphi(t,x) \cdot \nu_\Omega &= 0  &&\forall x \in \partial
  \Omega,\, \forall t \in I.\label{eq:CHNSstrongBC2}
\end{align}
Here $\varphi$ denotes the phase field, $\mu$ the chemical potential, $v$ the
velocity field and $p$ the pressure. Furthermore
$J =-\frac{\rho_2-\rho_1}{2}b\nabla \mu$ is a diffuse flux for $\varphi$.

In addition $\Omega \subset \mathbb{R}^n,\, n\in \{2,3\}$, denotes an   open, convex and
polygonal ($n=2$) or polyhedral ($n=3$) bounded domain. Its outer unit normal is denoted as
$\nu_\Omega$, and $I=(0,T]$ with $0<T<\infty$ is a time interval.

The free energy density is denoted by $W$ and is assumed to be of double-well type with
exactly two minima at $\pm 1$.
For $W$ we use a splitting $W = W_+ + W_-$, where
$W_+$ is convex and $W_-$ is concave.

The density is denoted by $\rho = \rho(\varphi)$, fulfilling $\rho(-1) = \rho_1$
and $\rho(1) = \rho_2$, where $\rho_1,\rho_2$ denote the densities of the
involved fluids.
The viscosity is denoted by $\eta = \eta(\varphi)$, fulfilling $\eta(-1) =
\eta_1$ and $\eta(1) = \eta_2$, with individual fluid viscosities $\eta_1,\eta_2$.
The constant mobility is denoted by $b$. The gravitational force is
denoted by $K$.
By $Dv = \frac{1}{2}\left(\nabla v + (\nabla v)^t\right)$ we denote the symmetrized gradient.
The scaled surface tension is denoted by $\sigma$ and the interfacial
width is proportional to $\epsilon$.
We further have a volume force $f$ and
boundary data $g$, as well as an initial phase field $\varphi_0$ and a
solenoidal initial velocity field $v_0$.

Concerning results on existence of solutions for
\eqref{eq:CHNSstrong1}--\eqref{eq:CHNSstrongBC2} under different assumptions on
$W$ and $b$  we refer to
\cite{AbelsDepnerGarcke_CHNS_AGG_exSol,
AbelsDepnerGarcke_CHNS_AGG_exSol_degMob,
Gruen_convergence_stable_scheme_CHNS_AGG}.

\subsection*{Assumptions}
For the data of our
problem we assume:
\begin{enumerate}[label=(A\arabic{*})]
  \item \label{ass:psi_C2}
  $W:\mathbb{R}\to \mathbb{R}$ is twice continuously differentiable and is
  of double-well type, i.e. it has exactly two minima at $\pm1$ with values
  $W(\pm1) =0$.
  \item \label{ass:psi_boundedPoly}
  $W$ and its derivatives are polynomially bounded, i.e. there exists a $C>0$
  such that $|W(x)| \leq C(1+|x|^q)$, $|W'_+(x)| \leq C(1+|x|^{q-1})$,
  $|W'_-(x)| \leq  C(1+|x|^{q-1})$,
  $|W''_+(x)| \leq  C(1+|x|^{q-2})$,
  and $|W''_-(x)| \leq  C(1+|x|^{q-2})$ holds for some
  $q \in [2,4]$ if $n=3$ and $q \in [2,\infty)$ if $n=2$.
  \item \label{ass:rhoeta_linear}
  There exists $\varphi_a\leq -1$ and $\varphi_b\geq 1$, such that
  $\rho(\varphi) = \rho(\varphi_a)$ for $\varphi\leq
  \varphi_a$, and $\rho(\varphi) = \rho(\varphi_b)$ for $\varphi\geq \varphi_b$.
  For $\varphi_a<\varphi<\varphi_b$ the function $\rho(\varphi)$ is
  affine linear, i.e.
  $\rho(\varphi) = \frac{1}{2}\left((\rho_2-\rho_1)\varphi
  +(\rho_1+\rho_2)\right) $,
  and we define $\rho_\delta :=
  \frac{(\rho_2-\rho_1)}{2}$.
  
  Further,  $\eta(\varphi) = \eta(\varphi_a)$ for $\varphi\leq \varphi_a$, and
  $\eta(\varphi) = \eta(\varphi_b)$ for $\varphi\geq \varphi_b$.
  For $\varphi_a<\varphi<\varphi_b$ the function $\eta(\varphi)$ is affine linear,
  i.e.
  $\eta(\varphi) = \frac{1}{2}\left((\eta_2-\eta_1)\varphi
  +(\eta_1+\eta_2)\right) $.
   
  We define $\overline \rho > \underline \rho>0$, $\overline \eta \geq \underline
  \eta>0$ fulfilling
  \begin {itemize}
    \item[$\bullet$] $\overline \rho \geq \rho(\varphi)\geq\underline \rho>0$,
    \item[$\bullet$] $\overline \eta \geq \eta(\varphi)\geq\underline \eta>0$,
  \end{itemize}
  see Remark \ref{rm:phi_bounded}.
  \item \label{ass:phi_meanZero}
  The mean value of $\varphi$ is zero,
  i.e. there holds $\frac{1}{|\Omega|} \int_\Omega \varphi\,dx = 0$. This can be
  achieved by choosing the values indicating the pure phases accordingly and
  considering a shifted system if required.
  In this case the values $\pm1$ change to some other appropriate values.
\end{enumerate}

\begin{remark}\label{rm:freeEnergies}
The Assumptions \ref{ass:psi_C2}--\ref{ass:psi_boundedPoly} are for
example fulfilled by the polynomial free energy density
\begin{align*}
  W^{poly}(\varphi) = \frac{1}{4}\left(1-\varphi^2\right)^2.
\end{align*}
Another free energy density fulfilling these assumptions is the relaxed
double-obstacle free energy density given by
\begin{align}
  \lambda(y)& := \max(0,y-1) + \min(0,y+1),\nonumber\\
  \xi &:= \frac{1+2\hp+\sqrt{4\hp+1}}{2\hp},\nonumber\\
  \delta & := \frac{1}{2}\left(1-\xi^2\right) +
  \frac{\hp}{3}|\lambda(\xi)|^3,\nonumber\\
  W^{\hp}(y) & =
  \frac{1}{2}\left(
  1-(\xi y)^2\right)
  +\frac{\hp}{3}|\lambda(\xi y)|^3
  + \delta
  \label{eq:freeEnergy}
\end{align}
where $\hp\gg 0$ denotes a relaxation parameter.
$W^{\hp}$ can be understood as a relaxation of the
double-obstacle free energy density
\begin{align*}
  W^{\infty}(\varphi) =
  \begin{cases}
\frac{1}{2}\left(1-\varphi^2\right) & \mbox{ if } |\varphi|\leq 1,\\
0           & \mbox{ else},
\end{cases}
\end{align*}
which is proposed in \cite{OonoPuri_DoubleObstacelPotential,BloweyElliott_I} to
model phase separation. We note that here we use a cubic penalisiation to obtain the required regularity from
\ref{ass:psi_C2} and that $\xi$ is chosen such that $W^{\hp}$ takes its
minima at $\pm 1$ and $\delta$ is such that $W^{\hp}(\pm1) = 0$.

In the numerical examples of this work we use the free energy density $W \equiv
W^{\hp}$.
For this choice the splitting into convex and concave part reads
\begin{align*}
  W_+(\varphi) &= \hp\frac{1}{3}|\lambda(\xi\varphi)|^3,
  &
  W_-(\varphi) &= \frac{1}{2}(1-(\xi\varphi)^2) + \delta.
\end{align*}
\end{remark}

\begin{remark}\label{rm:phi_bounded}
For the weak formulation of \eqref{eq:CHNSstrong1}--\eqref{eq:CHNSstrongBC2} we
later require affine linearity of $\rho$ on the image of $\varphi$. The affine linearity of
$\eta$ is assumed for simplicity. Note that in view of Assumption
\ref{ass:rhoeta_linear}, this essentially implies a bound on $\varphi$, namely $\varphi \in
(\varphi_a,\varphi_b)$ as stated in Assumption~\ref{ass:rhoeta_linear}.

Using $W^{\hp}$ as free energy density we argue, that for
$\hp$ sufficiently large 
(see \cite[Rem. 6]{GarckeHinzeKahle_CHNS_AGG_linearStableTimeDisc})  
 $|\varphi| \leq 1+\theta$ holds, with $\theta$ sufficiently small, and  in
\cite{Kahle_Linfty_bound} it is shown for the Cahn--Hilliard equation without transport, that for the
energy \eqref{eq:freeEnergy} in fact 
$\|\varphi\|_{L^\infty(\Omega)} \leq 1+C\hp^{-1/2}$ holds.

In a general setting one might use a nonlinear dependence between
$\varphi$ and $\rho$, see e.g. \cite{AbelsBreit_weakSolution_nonNewtonian_DifferentDensities},
or choose a cut-off procedure as
proposed in \cite{Gruen_convergence_stable_scheme_CHNS_AGG,Tierra_Splitting_CHNS}.

Anyway, since we later require linearity of $\rho$ on the image of $\varphi$ we
state Assumption \ref{ass:rhoeta_linear} and note that this assumption is fulfilled in our numerical
examples in Section~\ref{sec:num}.
\end{remark}


\subsection*{Notation}

We use the conventional notation for Sobolev and Hilbert Spaces, see e.g.
\cite{Adams_SobolevSpaces}.
With $L^p(\Omega)$, $1\leq p\leq \infty$, we denote the space of measurable functions on $\Omega$,
whose modulus to the power $p$ is Lebesgue-integrable. $L^\infty(\Omega)$ denotes the space of
measurable functions on $\Omega$, which are essentially bounded.
For $p=2$ we denote by
$L^2(\Omega)$ the space of square integrable functions on $\Omega$
with inner product $(\cdot,\cdot)$ and norm $\|\cdot \|$.
By $W^{k,p}(\Omega)$, $k\geq 1, 1\leq p\leq \infty$, we denote the Sobolev space of functions
admitting weak derivatives up to order $k$ in $L^p(\Omega)$.
If $p=2$ we write $H^k(\Omega)$.

For $f \in H^1(\Omega)^n$ we introduce the continuous trace operator
$\gamma:H^1(\Omega)^n \to H^{\frac{1}{2}}(\partial\Omega)^n$
as $\gamma f := f|_{\partial\Omega}$.
We further note that for $g \in H^{\frac{1}{2}}(\partial\Omega)^n$ with $g\cdot \nu_\Omega = 0$
there exists $\widetilde g \in H^1(\Omega)^n, (div \widetilde g,q) = 0 \forall q \in L^2(\Omega)$ with
$\gamma \widetilde g = g$ and $\|\widetilde g\|_{H^1(\Omega)^n} \leq C
\|g\|_{H^{\frac{1}{2}}(\partial \Omega)^n}$, where $C$ is independent of $g$. 

The subspace $H^1_0(\Omega)^n \subset H^1(\Omega)^n$
denotes the set of functions with vanishing boundary trace.
We further set
\begin{align*}
  L^2_{(0)}(\Omega) = \{v\in L^2(\Omega)\,|\, (v,1) = 0\},
\end{align*}
and with
\begin{align*}
  H_{\sigma}(\Omega) = \{v\in H^1(\Omega)^n\,|\, (\mbox{div}(v),q) =
  0\,\forall q\in L^2(\Omega) \}
\end{align*}
we denote the space of all weakly solenoidal $H^1(\Omega)$ vector fields.
We
stress that there is no correspondence between the subscript $\sigma$ and the
scaled surface tension. We denote both terms using $\sigma$ since these are
standard notations.
We further introduce
\begin{align*}
  H_{0,\sigma}(\Omega) = H^1_0(\Omega)^n \cap H_\sigma(\Omega).
\end{align*}

For $u\in L^q(\Omega)^n$, $q>2$ if $n=2$, $q\geq3$ if $n=3$, and $v,w \in
H^1(\Omega)^n$ we introduce the trilinear form
\begin{align}
  a(u,v,w) =
  \frac{1}{2}\int_\Omega\left( \left(u\cdot\nabla\right)v \right)w\,dx
  -\frac{1}{2}\int_\Omega\left( \left(u\cdot\nabla\right)w \right)v\,dx.
  \label{eq:NOT:trilinearForm}
\end{align}
Note that there holds $a(u,v,w) = -a(u,w,v)$, and especially $a(u,v,v) = 0$.
We have the following stability estimate by H\"older inequalities and Sobolev
embedding
\begin{align*}
  |a(u,v,w)| \leq C\|u\|_{L^q(\Omega)}\|v\|_{H^1(\Omega)}\|w\|_{H^1(\Omega)}.
\end{align*}


For a square summable series of functions $\left(f_m\right)_{m=1}^{M} \in V^M$,
where $(V,\|\cdot\|_V)$ is a normed vector space, we introduce the notation
$\|(f^m)_{m=1}^M\|_V^2 = \sum_{m=1}^M\|f_m\|_V^2 $.


\section{The time-discrete setting}\label{sec:TD}
In \cite{GarckeHinzeKahle_CHNS_AGG_linearStableTimeDisc} existence of time
discrete weak solutions for \eqref{eq:CHNSstrong1}--\eqref{eq:CHNSstrong4} is
shown for the case of $g=0$ and $f=0$.
In this section we formulate a time discrete optimization problem for
\eqref{eq:CHNSstrong1}--\eqref{eq:CHNSstrong4}, where we use $g, f$, and
$\varphi^0$ as controls, and show existence of solutions together with first
order optimality conditions.

\bigskip

Let
$0=t_0<t_1<\ldots<t_{m-1}<t_m<t_{m+1}<\ldots<t_M=T$
denote an equidistant subdivision of the
interval $\overline I = [0,T]$ with $\tau_{m+1}-\tau_m \equiv \tau$ and
sub intervals $I_0 = \{0\}$, $I_m = (t_{m-1},t_m],\,m=1,\ldots,M$.
From here onwards the superscript $m$ denotes the corresponding variables at
time instance $t_m$, e.g. $\varphi^m := \varphi(t_m)$.
For functions $f \in L^2(0,T,V)$ we introduce
$f^m := \strokedint_{I_m} f(t)\dt \in V$. Note that this can be seen as a
discontinuous Galerkin approximation using piecewise constant values.

We now introduce the optimal control problem under consideration.
For this purpose we interpret  $\varphi_0, f,$ and $g$
as sought control that we intend to choose, such that the
corresponding phase field $\varphi^M$ is close to a desired phase field
$\varphi_d$ in the mean square sense.
If $\varphi_d$ is the measurement of a real world system, then finding
$\varphi^0$ such that the corresponding phase field $\varphi^M$ is
close to $\varphi_d$ resembles an inverse problem.

We denote by $u\in U$  the control, where
\begin{align*}
  U =  U_I \times U_V \times U_B = \mathcal K
  \times
  L^2(0,T;\mathbb{R}^{u_v})
  \times
  L^2(0,T;\mathbb{R}^{u_b})
\end{align*}
is the space of
controls, where 
\begin{align*}
  \mathcal K := \{ v\in H^1(\Omega) \, | \, \int_\Omega v\dx = 0,\, |v| \leq 1  \} \subset
  H^1(\Omega) \cap L^\infty(\Omega)
\end{align*}
denotes the space of admissible initial phase fields. 

By
\begin{align*}
  \mathcal{B}:U\to H^1(\Omega)\cap L^\infty(\Omega)
  \times
  L^2(0,T;L^2(\Omega)^n)
  \times
  L^2(0,T;(H^{1/2}(\partial\Omega))^n)
\end{align*}
we denote the  linear and bounded control operator, which consists of three
components, i.e. $\mathcal{B} = [B_I,B_V,B_B]$,
where $B_I (u_I,u_V,u_B) \equiv B_I u_I := u_I$,
which is the initial phase field for the system,
$B_V (u_I,u_V,u_B) \equiv B_V u_V$ with
$B_Vu_V(t,x) = \sum_{l=1}^{u_v} f_l(x)u_V^l(t)$ where
$f_l \in L^2(\Omega)^n$ are given functions, which is a volume force acting on
the fluid inside $\Omega$,
and $B_B (u_I,u_V,u_B) \equiv B_B u_B$, with
$B_Bu_B(t,x) = \sum_{l=1}^{u_b} g_l(x)u_B^l(t)$ where  $g_l \in
H^{1/2}(\partial\Omega)^n$ denote given functions, and this is a boundary force
acting on the fluid as Dirichlet boundary data.
To obtain a solenoidal velocity field, $B_Bu_B$ has to fulfill the
compatibility condition $\int_{\partial\Omega} B_Bu_B \cdot\nu_\Omega\ds = 0$,
and in the following for simplicity we assume $g_l\cdot \nu_\Omega = 0$, $l=1,\ldots,u_b$,
point wise.

Given a triple
$(\alpha_I,\alpha_V,\alpha_B)$ of non negative values with
$\alpha_I + \alpha_V+\alpha_B = 1$ we introduce an inner product  for
$u = (u_I,u_V,u_B)\in U$ and $v = (v_I,v_V,v_B)\in U$ by
\begin{align}
  \label{eq:TD:innerU}
  (u,v)_U = \alpha_I(\nabla u_I,\nabla v_I)_{L^2(\Omega)}
  +\alpha_V (u_V,v_V)_{L^2(0,T;\mathbb{R}^{u_v})}
  +\alpha_B (u_B,v_B)_{L^2(0,T;\mathbb{R}^{u_b})}
\end{align}
and the norm $\|u\|_U^2 = (u,u)_U $.

We use the convention, that  $\alpha_\star = 0$, $\star \in \{I,V,B\}$,
means, that we do not apply this kind of control. If $\alpha_I = 0$ we use
$\varphi^0$ as given data, if $\alpha_B= 0$, we assume no-slip boundary data for
$v$. For notational convenience, in the following we assume $\alpha_\star \not = 0 $ for all
$\star \in \{I,V,B\}$.

We stress, that we do not discretize the
control in time, although the state equation is time discrete. Thus we follow
the concept of variational discretization \cite{HinzeVariDisk}. Anyway, the control
is discretized implicitly in time by the adjoint equation that we will
derive later.
We also note, that in view of the state equation, this allows us to dynamically
adapt the time step size $\tau$ to the flow condition without changing the
control space.

Following
\cite{GarckeHinzeKahle_CHNS_AGG_linearStableTimeDisc} we propose the following time discrete counterpart of
\eqref{eq:CHNSstrong1}--\eqref{eq:CHNSstrongBC2}:\\
Let $u\in U$
and $v_0 \in  H_\sigma(\Omega) \cap L^\infty(\Omega)$ be given.

\medskip

\noindent \textit{Initialization for $m=1$:}\\
Set $\varphi^0 =u_I$ and $v^0=v_0$.\\
Find $\varphi^1 \in H^1(\Omega)\cap L^\infty(\Omega)$, $\mu^1\in
W^{1,3}(\Omega)$,
$v^1 \in H_\sigma(\Omega)$, 
with $\gamma(v^1) = B_Bu_B^1$,
such that for all $w\in H_{0,\sigma}(\Omega)$, $\Phi \in H^1(\Omega)$,
and $\Psi \in H^1(\Omega)$ it holds
\begin{align}
  \frac{1}{\tau}\int_\Omega \left(\frac{1}{2}(\rho^1 + \rho^0)v^1-\rho^0v^0\right) w \dx
  +a(\rho^1v^0+J^1, v^1, w)&\nonumber\\
  +\int_{\Omega} 2\eta^1  Dv^{1}:Dw\dx
  -\int_\Omega \mu^{1}\nabla \varphi^0 w + \rho^0 K w\dx
  -\left<B_Vu_V^1,w\right>_{H^{-1}(\Omega),H_0^1(\Omega)} &= 0,
  \label{eq:TD:chns1_solenoidal_init}\\
  \frac{1}{\tau}\int_\Omega (\varphi^{1}-\varphi^0) \Psi \dx +
  \int_\Omega(v^{0}\cdot \nabla \varphi^{0}) \Psi \dx
  +  \int_\Omega b\nabla \mu^{1}\cdot\nabla \Psi\dx &=0,
  \label{eq:TD:chns2_solenoidal_init}\\
  \sigma \epsilon\int_\Omega\nabla \varphi^{1}\cdot\nabla \Phi\dx
  - \int_\Omega \mu^{1}\Phi\dx + \frac{\sigma}{\epsilon}\int_\Omega
  (W_+^\prime(\varphi^{1})+W_-^\prime(\varphi^0))\Phi\dx &= 0,
  \label{eq:TD:chns3_solenoidal_init}
\end{align}
where $ J^1 := -\rho_\delta b\nabla \mu^1$.

\medskip

\noindent \textit{Two-step scheme for $m> 1$:} \\
Given $\varphi^{m-2}\in H^1(\Omega)\cap L^\infty(\Omega)$,
$\varphi^{m-1}\in H^1(\Omega)\cap L^\infty(\Omega)$,
$\mu^{m-1} \in W^{1,3}(\Omega)$,
$v^{m-1} \in H_\sigma(\Omega)$,\\
find
$v^{m} \in H_\sigma(\Omega)$, $\gamma(v^m) = B_Bu_B^m$,
$\varphi^{m}\in H^1(\Omega)\cap L^\infty(\Omega)$,
$\mu^{m}\in W^{1,3}(\Omega)$
such that for all $w\in H_{0,\sigma}(\Omega)$,
$\Psi \in H^1(\Omega)$,
and $\Phi \in H^1(\Omega)$ it
holds
\begin{align}
  \frac{1}{\tau}\int_\Omega\left(\frac{\rho^{m-1}+\rho^{m-2}}{2} v^m -
  \rho^{m-2}v^{m-1}\right) w\dx+\int_{\Omega} 2\eta^{m-1}Dv^{m}:Dw\dx
  &\nonumber\\
  +a(\rho^{m-1}v^{m-1}+J^{m-1},v^{m},w)&\nonumber \\
  -   \int_\Omega \mu^{m}\nabla \varphi^{m-1} w + \rho^{m-1} K w\dx
  -\left<B_Vu_V^m,w\right>_{H^{-1}(\Omega),H_0^1(\Omega)} &=
  0,\label{eq:TD:chns1_solenoidal}\\
  \int_\Omega \frac{\varphi^{m}-\varphi^{m-1}}{\tau} \Psi \dx +
  \int_\Omega(v^{m}\cdot \nabla \varphi^{m-1}) \Psi \dx +  \int_\Omega
  b\nabla \mu^{m}\cdot\nabla \Psi\,dx &=0
  ,\label{eq:TD:chns2_solenoidal}\\
  \sigma \epsilon\int_\Omega\nabla \varphi^{m}\cdot\nabla \Phi \dx
  - \int_\Omega \mu^{m}\Phi \dx
  +\frac{\sigma}{\epsilon}\int_\Omega
  (W_+^\prime(\varphi^{m})+W_-^\prime(\varphi^{m-1}))\Phi \dx &= 0,\label{eq:TD:chns3_solenoidal}
\end{align}
where $ J^{m-1} := -\rho_\delta b \nabla \mu^{m-1}$.
We further use the abbreviations $\rho^{m} := \rho(\varphi^{m})$ and $\eta^{m}
:= \eta(\varphi^{m})$.

We note that in
\eqref{eq:TD:chns1_solenoidal}--\eqref{eq:TD:chns3_solenoidal} the only
nonlinearity arises from $W_+'$ and thus only the equation
\eqref{eq:TD:chns3_solenoidal} is nonlinear.
A similar argumentation holds for
\eqref{eq:TD:chns1_solenoidal_init}--\eqref{eq:TD:chns3_solenoidal_init}.
The regularity $\nabla \mu^{m-1} \in L^3( \Omega)$ is required for the trilinear
form $a(\cdot,\cdot,\cdot)$, see \eqref{eq:NOT:trilinearForm}.


\begin{remark}
We note that \eqref{eq:TD:chns1_solenoidal}--\eqref{eq:TD:chns3_solenoidal}
is a two-step scheme for the phase field variable $\varphi$,
and thus we need an initialization as proposed in
\eqref{eq:TD:chns1_solenoidal_init}--\eqref{eq:TD:chns3_solenoidal_init}.
Here, as in \cite{GarckeHinzeKahle_CHNS_AGG_linearStableTimeDisc} the
sequential coupling of
\eqref{eq:TD:chns2_solenoidal_init}--\eqref{eq:TD:chns3_solenoidal_init} and
\eqref{eq:TD:chns1_solenoidal_init}  is used as proposed in \cite{KayStylesWelford}.

Another variant might be to require initial data on time instance $t_{-1}$ for
the phase field and at $t_0$ for the velocity field. Equations
\eqref{eq:TD:chns2_solenoidal}--\eqref{eq:TD:chns3_solenoidal} can than be
solved for $\varphi^0$ and $\mu^0$ to obtain initial values, see
\cite{Hintermueller_Keil_Wegner__OPT_CHNS_density}.

Since we are later also interested in control of the initial value $\varphi_0$
we propose the initialization scheme
\eqref{eq:TD:chns1_solenoidal_init}--\eqref{eq:TD:chns3_solenoidal_init} here.
\end{remark}

\begin{theorem}\label{thm:TD:exSol_initStep}
Let $v^0\in H_\sigma(\Omega) \cap L^\infty(\Omega)^n$
and $u\in U$ be given data.

Then there exists a unique solution $(v^1,\varphi^1,\mu^1)$ to
\eqref{eq:TD:chns1_solenoidal_init}--\eqref{eq:TD:chns3_solenoidal_init}, and it holds
\begin{align}
  \|v^1\|_{H^{1}(\Omega)^n} +
  &\|\varphi^1\|_{H^2(\Omega)} +
  \|\mu^1\|_{H^{2}(\Omega)}\nonumber \\
  & \leq
  C_1(v_0)
  C_2\left(\|u_I\|_{H^1(\Omega)},
  \|B_Vu_V^1\|_{L^2(\Omega)^n},
  \|B_Bu_B^1\|_{H^{\frac{1}{2}}(\partial\Omega)^n}\right)
  \label{eq:TD:exSol_initStep:stab}
\end{align}
and
$\varphi^1,\mu^1$ can be found be Newton's method.
The constant $C_2$ depends polynomially on its arguments.
\end{theorem}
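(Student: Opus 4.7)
The plan is to exploit the sequential structure of the initialization step: equations \eqref{eq:TD:chns2_solenoidal_init}--\eqref{eq:TD:chns3_solenoidal_init} involve only the unknowns $\varphi^1$ and $\mu^1$, with $\varphi^0=u_I$ and $v^0$ entering as data, while \eqref{eq:TD:chns1_solenoidal_init} is a \emph{linear} Stokes-type problem for $v^1$ once $(\varphi^1,\mu^1)$ is in hand. I would therefore first solve the Cahn--Hilliard subsystem, then the momentum equation, and finally close with stability and the Newton-type remarks.

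For the Cahn--Hilliard subsystem, I would eliminate $\mu^1$ and identify $\varphi^1$ as the unique minimizer of the strictly convex, coercive functional
\begin{equation*}
  J(\varphi) = \frac{1}{2\tau}\|\varphi-\hat\varphi\|_{\ast}^{2}
  + \frac{\sigma\epsilon}{2}\|\nabla\varphi\|^{2}
  + \frac{\sigma}{\epsilon}\int_\Omega W_+(\varphi)\dx
  + \frac{\sigma}{\epsilon}\int_\Omega W_-'(\varphi^0)\,\varphi\dx ,
\end{equation*}
over the affine subspace of $H^1(\Omega)$ fixed by the mean-value constraint obtained by testing \eqref{eq:TD:chns2_solenoidal_init} with the constant $1$; here $\hat\varphi$ absorbs the transport contribution $\varphi^0-\tau v^0\cdot\nabla\varphi^0$ and $\|\cdot\|_\ast$ denotes the inverse Neumann-Laplacian norm weighted by the constant mobility $b$. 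Strict convexity of $W_+$ from \ref{ass:psi_C2}, together with the polynomial bounds \ref{ass:psi_boundedPoly} and the $\|\nabla\varphi\|^{2}$ term, ensures that $J$ is strictly convex and coercive on $H^1(\Omega)$; the direct method delivers a unique minimizer whose Euler--Lagrange equations reproduce \eqref{eq:TD:chns2_solenoidal_init}--\eqref{eq:TD:chns3_solenoidal_init}. Elliptic regularity for the Neumann problem applied first to \eqref{eq:TD:chns3_solenoidal_init} (using that $W_+'(\varphi^1)\in L^2$ via the embedding $H^1\hookrightarrow L^6$ and the growth bound \ref{ass:psi_boundedPoly}), and then to \eqref{eq:TD:chns2_solenoidal_init}, upgrades the solution to $\varphi^1,\mu^1\in H^2(\Omega)$.

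For the momentum equation, I would lift the Dirichlet trace $\gamma(v^1)=B_Bu_B^1$ by the bounded extension $\widetilde g\in H_\sigma(\Omega)$ recalled in the Notation section, write $v^1=\widetilde g+w$, and obtain a linear variational problem for $w\in H_{0,\sigma}(\Omega)$. Coercivity is immediate: the skew-symmetric trilinear form satisfies $a(\rho^1v^0+J^1,w,w)=0$, the mass term is bounded below by $\underline\rho/(2\tau)$ from \ref{ass:rhoeta_linear}, and Korn's inequality combined with $\eta^1\geq\underline\eta>0$ controls the viscous term. Continuity of the trilinear form requires $\rho^1v^0+J^1\in L^q(\Omega)^n$, which holds since $v^0\in L^\infty$ and $\nabla\mu^1\in L^3$ follows from the $H^2$-regularity established in the previous step. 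Lax--Milgram then produces a unique $v^1\in H^1(\Omega)^n$.

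The stability bound is obtained by testing \eqref{eq:TD:chns2_solenoidal_init} with $\mu^1$ and \eqref{eq:TD:chns3_solenoidal_init} with $(\varphi^1-\varphi^0)/\tau$, adding, and exploiting convexity/concavity of $W_\pm$ to derive a discrete energy identity controlling $\|\nabla\mu^1\|$ and $\|\nabla\varphi^1\|$ in terms of the data; elliptic regularity then propagates these into $H^2$-bounds whose polynomial dependence on $\|u_I\|_{H^1(\Omega)}$ is inherited from \ref{ass:psi_boundedPoly}. The $H^1$-bound on $v^1$ follows by testing \eqref{eq:TD:chns1_solenoidal_init} with $v^1-\widetilde g$ and inserting the previously obtained bounds on $\mu^1$ and $\varphi^1$. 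Finally, since the only nonlinearity in the coupled system is $W_+'(\varphi^1)$ with $W_+''\geq 0$, the Newton Jacobian inherits strict positive-definiteness from the coercive linear part, which justifies use of Newton's method. The main obstacle I anticipate is the last one: tracking precisely how the polynomial growth of $W_+'$ (up to order $q-1=3$ in three dimensions) and the coupling through the $H^2$-bootstrap propagate into the explicit polynomial form $C_2(\cdot,\cdot,\cdot)$ of \eqref{eq:TD:exSol_initStep:stab}, while also accounting for the mean-value contribution from $v^0\cdot\nabla\varphi^0$ when the boundary control $B_Bu_B$ gives $v^0$ a non-trivial tangential trace.
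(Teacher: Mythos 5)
Your proposal is correct and follows the same architecture as the paper's proof: first solve the Cahn--Hilliard subsystem \eqref{eq:TD:chns2_solenoidal_init}--\eqref{eq:TD:chns3_solenoidal_init} with $v^0\cdot\nabla u_I$ treated as given data, bootstrap $\varphi^1,\mu^1$ to $H^2(\Omega)$ by Neumann elliptic regularity using the growth bound \ref{ass:psi_boundedPoly}, and then treat \eqref{eq:TD:chns1_solenoidal_init} as a linear, coercive, continuous problem for $v^1$ solved by Lax--Milgram, using the antisymmetry of $a$ and Korn's inequality. The one place you genuinely diverge is the existence and uniqueness of $(\varphi^1,\mu^1)$: the paper simply cites \cite{HintermuellerHinzeTber}, whereas you give a self-contained argument by minimizing a strictly convex, coercive functional in the $b$-weighted inverse-Neumann-Laplacian metric over the mean-value-constrained affine subspace. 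This buys independence from the reference at the cost of some bookkeeping: the admissible mean of $\varphi^1$ is shifted by $\tau\int_{\partial\Omega}u_I\,v^0\cdot\nu_\Omega\,ds$ when $v^0$ has nonzero normal trace, which you correctly flag; and note that strict convexity of your functional comes from the quadratic $\|\cdot\|_\ast^2$ and gradient terms rather than from $W_+$, which for $W^{\hp}$ is convex but not strictly convex, so the phrasing there should be adjusted. Your remarks on the energy estimate (testing with $\mu^1$ and $\tau^{-1}(\varphi^1-\varphi^0)$) and on Newton's method via monotonicity of $W_+'$ are at the same level of detail as the paper, which likewise defers the Newton convergence to the cited reference.
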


\begin{proof}
The existence  of $(\varphi^1,\mu^1)\in H^1(\Omega) \times
H^1(\Omega)$ follows from  (\cite{HintermuellerHinzeTber}). There the corresponding system without
the transport term $v^0\nabla u_I$ is
analyzed. This term is a given volume force, that can be incorporated in a
straightforward manner. From this we directly obtain the stability inequality
\begin{align*}
  \|\varphi^1\|_{H^1(\Omega)} + \|\mu^1\|_{H^1(\Omega)}
  \leq
  C_1(v^0)C_2(\|u_I\|_{H^1(\Omega)}).
\end{align*}

Since $|W^\prime_+(\varphi)|\leq C(1+|\varphi|^q|)$, $q\leq 3$ we have
$W^\prime_+(\varphi) \in L^2(\Omega)$ and by $L^2$ regularity theory we have
$\varphi^1 \in H^2(\Omega)$ and
\begin{align*} 
  \|\varphi^1\|_{H^2(\Omega)}
  \leq C(\|\mu^1\|_{H^1(\Omega)}, \|\varphi^1\|_{H^1(\Omega)}, \|u_I\|_{H^1(\Omega)}).
\end{align*}

We further have $v^0\nabla u_I \in L^{2}(\Omega)$ and thus we have
$\mu^1\in H^{2}(\Omega)$ and the stability inequality
\begin{align*}
  \|\mu^1\|_{H^2(\Omega)} \leq
  C_1(v_0)C_2(\|u_I\|_{H^1(\Omega)},\|\varphi^1\|_{H^1(\Omega)}).
\end{align*}

Convergence of Newton's method directly follows from
\cite{HintermuellerHinzeTber}. Note that the only nonlinearity $W^\prime_+$ is
monotone.

With $v^0,\varphi^1,u_I$, and $\mu^1$ given data,
\eqref{eq:TD:chns1_solenoidal_init}
defines a coercive and continuous bilinear form on $H_\sigma$ and thus existence and stability of a
solution follows from Lax-Milgram's theorem.
This uses the antisymmetry of the trilinear form $a$ and Korn's inequality.
\end{proof}

\begin{theorem}\label{thm:TD:exSol_twoStep}
Let $v^{m-1}\in  H_{\sigma}(\Omega)$,
$\varphi^{m-2}\in H^1(\Omega)\cap L^\infty(\Omega)$, $\varphi^{m-1}\in H^1(\Omega)\cap
L^\infty(\Omega)$, and $\mu^{m-1}\in W^{1,3}(\Omega)$, be given data.
Then there exists a unique solution  $(v^m,\varphi^m,\mu^m)$ to
\eqref{eq:TD:chns1_solenoidal}--\eqref{eq:TD:chns3_solenoidal}.

It further holds $\varphi^m\in H^2(\Omega)$ and if additionally $\varphi^{m-1} \in W^{1,3}(\Omega)$
we have $\mu^m \in H^2(\Omega)$ and  
 the stability inequality
\begin{align*}
  \|v^m\|_{H^{1}(\Omega)^n}
  + &\|\mu^m\|_{H^2(\Omega)}
  + \|\varphi^m\|_{H^2(\Omega)}\\
  \leq&
  C\left(
  \vphantom{\|v_{\partial\Omega}\|_{H^{\frac{1}{2}}(\partial\Omega)}}
  \|v^{m-1}\|_{H^{1}(\Omega)^n},
  \|\varphi^{m-1}\|_{W^{1,3}(\Omega)},
  \|\varphi^{m-2}\|_{H^1(\Omega)},\right.\\
  & \quad\left.
  \|B_Vu_V^m\|_{L^{2}(\Omega)^n},
  \|B_Bu_B^m\|_{H^{\frac{1}{2}}(\partial\Omega)^n}
  \right),
\end{align*}
holds.
The constant $C$ depends polynomially on its arguments.

\end{theorem}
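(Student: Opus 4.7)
The plan is to organize the proof in three stages: existence and uniqueness in the natural energy spaces, elliptic bootstrap for the $H^2$ regularity of $\varphi^m$ and $\mu^m$, and a discrete energy identity that yields the quantitative stability bound.

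For existence and uniqueness I would follow the strategy of \cite{GarckeHinzeKahle_CHNS_AGG_linearStableTimeDisc}, noting that the additional controls $B_V u_V^m$ and $B_B u_B^m$ enter only linearly and can be incorporated without difficulty. The inhomogeneous Dirichlet datum is lifted via the divergence-preserving extension $\widetilde g$ from the notation section, reducing the velocity equation to a homogeneous shift $v^m-\widetilde g\in H_{0,\sigma}(\Omega)$. Since $W_+'$ is the only nonlinearity and is monotone, a fixed-point argument in $v^m$ works: for fixed $\tilde v$, solve the semilinear Cahn--Hilliard subsystem \eqref{eq:TD:chns2_solenoidal}--\eqref{eq:TD:chns3_solenoidal} for $(\varphi^m(\tilde v),\mu^m(\tilde v))$ by the monotone-operator theory from \cite{HintermuellerHinzeTber}, then substitute the resulting $\mu^m$ into \eqref{eq:TD:chns1_solenoidal} and appeal to Lax--Milgram, using the antisymmetry of $a$, Korn's inequality, and the coercive mass term $(\rho^{m-1}+\rho^{m-2})/(2\tau)$. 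Uniqueness then follows from the energy identity (see below) combined with the monotonicity of $W_+'$.

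The $H^2$ regularities follow by elliptic bootstrap. Rewrite \eqref{eq:TD:chns3_solenoidal} as the Neumann problem
\begin{equation*}
-\sigma\epsilon\,\Delta\varphi^m + \tfrac{\sigma}{\epsilon}W_+'(\varphi^m) = \mu^m - \tfrac{\sigma}{\epsilon}W_-'(\varphi^{m-1}), \qquad \nabla\varphi^m\cdot\nu_\Omega = 0 \text{ on } \partial\Omega.
\end{equation*}
By \ref{ass:psi_boundedPoly} with $q\le 4$ and the embedding $H^1(\Omega)\hookrightarrow L^6(\Omega)$ (the worst case, $n=3$), the right-hand side lies in $L^2(\Omega)$, and classical Neumann regularity on the convex polyhedron $\Omega$ gives $\varphi^m\in H^2(\Omega)$. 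Under the additional assumption $\varphi^{m-1}\in W^{1,3}(\Omega)$, \eqref{eq:TD:chns2_solenoidal} reads
\begin{equation*}
-b\,\Delta\mu^m = -\tfrac{1}{\tau}(\varphi^m-\varphi^{m-1}) - v^m\cdot\nabla\varphi^{m-1}, \qquad \nabla\mu^m\cdot\nu_\Omega = 0 \text{ on } \partial\Omega,
\end{equation*}
and H\"older with exponents $6$ and $3$ together with $v^m\in H^1\hookrightarrow L^6$ puts the transport term in $L^2(\Omega)$, yielding $\mu^m\in H^2(\Omega)$.

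For the stability inequality I would exploit the thermodynamic consistency of the scheme. Testing \eqref{eq:TD:chns1_solenoidal} with $v^m-\widetilde g$, \eqref{eq:TD:chns2_solenoidal} with $\mu^m$, and \eqref{eq:TD:chns3_solenoidal} with $(\varphi^m-\varphi^{m-1})/\tau$, the capillary term $\int_\Omega\mu^m\nabla\varphi^{m-1}\cdot v^m\dx$ cancels against the transport term in \eqref{eq:TD:chns2_solenoidal}, and $\int_\Omega\mu^m(\varphi^m-\varphi^{m-1})/\tau\dx$ cancels between \eqref{eq:TD:chns2_solenoidal} and \eqref{eq:TD:chns3_solenoidal}, leaving a discrete energy inequality that controls $\|v^m\|_{H^1}$, $\|\nabla\mu^m\|_{L^2}$, and the relevant differences of $\varphi^m$ in terms of the data at the previous step and the control norms. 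A Poincar\'e argument, combined with the bound on the mean of $\mu^m$ obtained by testing \eqref{eq:TD:chns3_solenoidal} with $\Phi\equiv 1$, upgrades this to a full $H^1$ estimate for $\mu^m$; inserting the resulting bounds into the elliptic regularity estimates above produces the claimed $H^2$ stability, with the polynomial dependence on the data inherited from the polynomial growth of $W_\pm'$ in \ref{ass:psi_boundedPoly}. The main technical obstacle is handling the non-homogeneous Dirichlet lift $\widetilde g$ within the energy identity without spoiling the cancellation structure: the extra contributions from $a(\cdot,\cdot,\widetilde g)$ and $(2\eta^{m-1}Dv^m,D\widetilde g)$ must be absorbed via Young's inequality together with the trace estimate $\|\widetilde g\|_{H^1(\Omega)^n}\le C\|B_Bu_B^m\|_{H^{1/2}(\partial\Omega)^n}$ provided in the notation section.
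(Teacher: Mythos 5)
Your proposal is correct and follows essentially the same route as the paper: the same test functions ($v^m-\widetilde{B_Bu_B^m}$, $\mu^m$, $\tau^{-1}(\varphi^m-\varphi^{m-1})$) for the discrete energy estimate with the boundary lift absorbed via Young's inequality and the trace bound, the same elliptic Neumann bootstrap for $\varphi^m,\mu^m\in H^2(\Omega)$ using \ref{ass:psi_boundedPoly} and $v^m\in L^6(\Omega)$, $\nabla\varphi^{m-1}\in L^3(\Omega)$, and existence deferred to the machinery of \cite{GarckeHinzeKahle_CHNS_AGG_linearStableTimeDisc} (the paper invokes its Galerkin argument where you sketch an equivalent fixed-point decoupling). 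No gaps.
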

\begin{proof}
In \cite{GarckeHinzeKahle_CHNS_AGG_linearStableTimeDisc} the existence for
$\gamma(B_Bu_B^m) = 0$ and $B_Vu_V^m = 0$ is shown using a Galerkin approach.
The additional volume force is incorporated in a straight forward manner, and
the boundary data $B_Bu_B^m$ can be introduced by investigating a shifted
system, see Theorem \ref{thm:TD:exSol_initStep}.

We define $e := \widetilde{B_Bu_B^m}$ and
use $w = v^m - e$ as test function in \eqref{eq:TD:chns1_solenoidal},
$\Psi = \mu^m$ as test function in \eqref{eq:TD:chns2_solenoidal}, and
$\Phi = \tau^{-1}(\varphi^m-\varphi^{m-1})$ as test function in
\eqref{eq:TD:chns3_solenoidal}, and add the resulting equations.
Using the
properties of $W^\prime_+$ and $W^\prime_-$ we obtain (compare
\cite[Thm. 3]{GarckeHinzeKahle_CHNS_AGG_linearStableTimeDisc})
\begin{align*}
  &E(v^m,\varphi^m,\varphi^{m-1})
  + \frac{1}{2}\int_\Omega \rho^{m-2}|v^{m}-v^{m-1}|^2\dx
  + 2\tau \int_\Omega\eta^{m-1}|Dv^m|^2\dx\\
  &+\tau \int_\Omega b|\nabla \mu^m|^2\dx
  +\frac{\sigma\epsilon}{2}\|\nabla \varphi^m-\nabla \varphi^{m-1}\|^2\\
  &\leq
  E(v^{m-1},\varphi^{m-1},\varphi^{m-2})
  +\int_\Omega\left(\frac{\rho^{m-1}+\rho^{m-2}}{2} v^m-\rho^{m-2}v^{m-1}\right)
  e\dx\\
  &+ \tau a(\rho^{m-1}v^{m-1}+J^{m-1},v^m,e)
  + 2\tau\int_\Omega \eta^{m-1}Dv^m:De\dx\\
  &-\tau\int_\Omega \mu^m\nabla \varphi^{m-1}e\dx
  +(\rho^{m-1}K,v^m-e)
  +\tau(B_Vu_V^m,v^m-e)_{L^{2}(\Omega)^n}.
\end{align*}
By using the inequalities of H\"older, Korn and Young, together with Assumption
\ref{ass:rhoeta_linear} and the stability of the extension operator
$\widetilde\cdot$ the claim follows.
The regularity $\varphi^m,\mu^m\in H^2(\Omega)$ follow as in the proof of
Theorem~\ref{thm:TD:exSol_initStep}, but now using $\nabla \varphi^{m-1} \in L^3(\Omega)$ and
$v^m\in H_\sigma \hookrightarrow L^6(\Omega)$.
\end{proof}

Let us next introduce the optimization problem under investigation. For this we
first rewrite
\eqref{eq:TD:chns1_solenoidal_init}--\eqref{eq:TD:chns3_solenoidal} in a compact
and abstract  form and introduce
\begin{align*}
  Y :=&
  H_{\sigma}(\Omega)^M
  \times
  \left(H^1(\Omega)\cap L^\infty(\Omega)\right)^M
  \times
  W^{1,3}(\Omega)^M,\\
  Y_0 :=&
  H_{0,\sigma}(\Omega)^M
  \times
  \left(H^1(\Omega)\cap L^\infty(\Omega)\right)^M
  \times
  W^{1,3}(\Omega)^M,\\
  y :=&
  (v^m,\varphi^m,\mu^m)_{m=1}^M \in Y,\\
  Z :=&
  \left(
  H_{0,\sigma}(\Omega)^M
  \times
  H^1(\Omega)^M \times
  H^1(\Omega)^M \right)^\star
\end{align*}
\begin{align}
  e : Y_0\times U \to  Z, \nonumber\\
  e(y_0,u)= 0\label{eq:TD:eyu_eq_0}
\end{align}
The operator $e$ is defined as follows
\begin{align*}
  &\left<\tilde y,e(y_0,u)\right>_{Z^\star,Z}:=\\
  &
  \tau^{-1}\left(
  \frac{1}{2}(\rho^1+\rho^0)(v_0^1+\widetilde{B_Bu_B^1}) -
  \rho^0v^0,\tilde v^1\right)
  +a(\rho^1v^0+J^1,v_0^1+\widetilde{B_Bu_B^1},\tilde v^1) \\
  &  + (2\eta^1D(v_0^1+\widetilde{B_Bu_B^1}),D\tilde v^1)
  - (\mu^1\nabla u_1+ \rho^0 K,\tilde v^1) \\
  &-(B_Vu_V^1,\tilde v^1)\\
  &+ \tau^{-1}(\varphi^1-u_1,\tilde \varphi^1)
  + (v^0\nabla u_1,\tilde \varphi^1)
  +(b\nabla \mu^1,\nabla \tilde \varphi^1)\\
  &+\sigma\epsilon(\nabla \varphi^1,\nabla \tilde \mu^1)
  -(\mu^1,\tilde \mu^1)\\
  &+\sigma\epsilon^{-1}
  (W^\prime_+(\varphi^1)+W^\prime_-(u_1),\tilde \mu^1)\\
  +\sum_{m=2}^M &\left[
  \tau^{-1}\left(\frac{1}{2}(\rho^{m-1}+\rho^{m-2})(v_0^m+\widetilde{B_Bu_B^m})
  - \rho^{m-2}(v_0^{m-1}+\widetilde{B_Bu_B^{m-1}}),\tilde v^m\right)\right.\\
  &+a(\rho^{m-1}(v_0^{m-1}+\widetilde{B_Bu_B^{m-1}})+J^{m-1},v_0^m+\widetilde{B_Bu_B^m},\tilde v^m) \\
  &  + (2\eta^{m-1}D(v_0^m+\widetilde{B_Bu_B^m}),D\tilde v^m)
  - (\mu^m\nabla \varphi^{m-1} + \rho^{m-1} K,\tilde v^m) \\
  &-(B_Vu_V^m,\tilde v^m)\\
  &+ \tau^{-1}(\varphi^m-\varphi^{m-1},\tilde \varphi^m)
  + ((v_0^m+\widetilde{B_Bu_B^m})\nabla \varphi^{m-1},\tilde \varphi^m)
  +(b\nabla \mu^m,\nabla \tilde \varphi^m)\\
  &+\sigma\epsilon(\nabla \varphi^m,\nabla \tilde \mu^m)
  -(\mu^m,\tilde \mu^m)\\
  &\left.+\sigma\epsilon^{-1}
  (W^\prime_+(\varphi^m)+W^\prime_-(\varphi^{m-1}),\tilde \mu^m)
  \vphantom{\frac{1}{2}}\right]\\
\end{align*}
with $y_0 := (v_0^m,\varphi^m,\mu^m)_{m=1}^M \in Y_0$,
and $\tilde y = ((\tilde v^m)_{m=1}^M,  (\tilde\varphi^m)_{m=1}^M,
(\tilde\mu^m)_{m=1}^M)))) \in Z^\star$.
Here again $\rho^m := \rho(\varphi^m)$, $\eta^m := \eta(\varphi^m)$ and
especially $\rho^0 := \rho(u_I)$, $\eta^0 := \eta(u_I)$.


Now the time-discrete optimization problem under
investigation is given as
\begin{equation}  
  \begin{aligned}
    \min_{u\in U} J( (\varphi^m)_{m=1}^M,u) =
    \frac{1}{2}&\|\varphi^M-\varphi_d\|^2_{L^2(\Omega)}\\
    & +\frac{\alpha}{2}
    \left(
    \alpha_I
    \int_\Omega \frac{\epsilon}{2} |\nabla u_I|^2 +
    \epsilon^{-1}W_u(u_I)\dx
    \right.\\
    &\left.\vphantom{\int_\Omega}
    + \alpha_V \|u_V\|^2_{L^2(0,T;\mathbb{R}^{u_v})}
    + \alpha_B \|u_B\|^2_{L^2(0,T;\mathbb{R}^{u_b})}
    \right)\\
    s.t. \,\, e(y,u) = 0.
  \end{aligned}
  \tag{$\mathcal P$}\label{prob:TD:Ptau}
\end{equation}
Here $\varphi_d \in L^2(\Omega)$
is a given desired phase field, and $\alpha>0$ is a weight for the control
cost.
For the control cost of the initial value we use the well-known Ginzburg--Landau
energy of the phase field $u_I$ with interfacial thickness $\epsilon$. 
Here we use the double obstacle free energy density 
$W_u \equiv W^\infty$ given in Remark~\ref{rm:freeEnergies}.
In our numerical examples it is
advantageous to use this non-smooth free energy density instead of the smoother one used for the
simulation.

\begin{theorem}\label{thm:TD:exSol_pde}
Let $v^0 \in H_{\sigma}(\Omega)\cap L^\infty(\Omega)$, $u \in U$ be given.

Then there exists a unique solution to the equation $e(y,u) = 0$, i.e. there
exist
$(v^m, \varphi^m, \mu^m)_{m=1}^M \in Y$ such that
$(v^m,\varphi^m,\mu^m)$
is the unique solution to
\eqref{eq:TD:chns1_solenoidal_init}--\eqref{eq:TD:chns3_solenoidal} for
$m=1,\ldots,M$.
Moreover there holds
\begin{align*}
  \|(v^m)_{m=1}^M&\|_{H^{1}(\Omega)^n}
  +\|(\varphi^m)_{m=1}^M\|_{H^2(\Omega)}
  +\|(\mu^m)_{m=1}^M\|_{H^{2}(\Omega)}\\
  \leq&
  C_1\left(v^0\right)
  C_2\left(\|u_I\|_{H^1(\Omega)},
  \| (B_Vu_V^m)_{m=1}^M\|_{L^{2}(\Omega)^n},
  \| (B_Bu_B^m)_{m=1}^M\|_{H^{\frac{1}{2}}(\partial\Omega)^n}
  \right).
\end{align*}

Further $e(y,u) $ is Fréchet-differentiable with respect to $y$, and $e_y(y,u)
\in \mathcal{L}(Y_0,Z)$ has a bounded inverse. Thus Newton's method can be
applied for finding the unique solution of \eqref{eq:TD:eyu_eq_0} for given $u$.
\end{theorem}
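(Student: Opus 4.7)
The plan is to build the solution step by step in time using Theorem~\ref{thm:TD:exSol_initStep} and Theorem~\ref{thm:TD:exSol_twoStep} as the inductive building blocks, then handle the differentiability and invertibility of $e_y$ by linearising about the just-constructed trajectory.

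For existence and the stability estimate, I would argue by induction on the time index $m$. Theorem~\ref{thm:TD:exSol_initStep} provides $(v^1,\varphi^1,\mu^1)$ with $\varphi^1,\mu^1\in H^2(\Omega)$, $v^1\in H^1(\Omega)^n$, and the corresponding polynomial stability bound in terms of $v^0$, $u_I$, $B_Vu_V^1$ and $B_Bu_B^1$. The $H^2$-regularity of $\varphi^1$ gives $\varphi^1\in W^{1,3}(\Omega)$ via the Sobolev embedding $H^2(\Omega)\hookrightarrow W^{1,3}(\Omega)$ for $n\in\{2,3\}$. This is precisely the hypothesis required by Theorem~\ref{thm:TD:exSol_twoStep} to propagate to $m=2$, and by induction to every $m\le M$. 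Composing the polynomial-type estimates from Theorem~\ref{thm:TD:exSol_twoStep} along the time steps and using $\sum_{m=1}^M \|(\cdot)^m\|^2 = \|(\cdot)^m\|_{m=1}^M\|^2$ produces the claimed global bound, with the constants depending polynomially on the data.

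For the Fréchet-differentiability of $e$ with respect to $y$, I would decompose $e$ into its building blocks. The bilinear and trilinear couplings (including $a(\cdot,\cdot,\cdot)$, the mass term with $\rho^m$, the viscous term with $\eta^m$, and the transport terms) are polynomial in $y$ with continuous multiplication between the natural spaces involved (using $H^1\hookrightarrow L^6$, $W^{1,3}\hookrightarrow L^\infty$ in appropriate combinations), hence smooth. The only genuinely nonlinear contribution comes from the Nemytskii operator induced by $W_+'$; by Assumption~\ref{ass:psi_C2}--\ref{ass:psi_boundedPoly}, $W_+'$ is $C^1$ with polynomially bounded derivative of order at most $q-2$, so the Nemytskii operator is continuously differentiable from $L^\infty(\Omega)\cap H^1(\Omega)$ into $L^2(\Omega)$. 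Adding contributions yields $e\in C^1(Y_0\times U;Z)$.

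For bounded invertibility of $e_y(y,u)$, I would exploit the two-step time-marching structure: the linearised system is block lower-triangular in the time index, so solvability reduces to solvability of the linearised single-step problem at each $m$, given right-hand sides in the dual space. Each single-step block is the linearisation, around the already constructed $(v^m,\varphi^m,\mu^m)$, of the Cahn--Hilliard--Stokes system~\eqref{eq:TD:chns1_solenoidal}--\eqref{eq:TD:chns3_solenoidal}; since $W_+''\ge 0$ by convexity of $W_+$ and $a(u,\cdot,\cdot)$ is antisymmetric, the principal part retains the coercivity structure that gave well-posedness in Theorem~\ref{thm:TD:exSol_initStep} and Theorem~\ref{thm:TD:exSol_twoStep}, so Lax--Milgram (or the Babu\v{s}ka--Ne\v{c}as inf-sup argument used there) provides a bounded linear inverse at each step. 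The hard part, and what I would spend most effort on, is this last step: keeping track of the mixed regularity of the trial and test spaces $(H_{0,\sigma},H^1,W^{1,3})$ versus $(H_{0,\sigma},H^1,H^1)^\star$, and verifying that the lower-order terms arising from differentiating $\rho(\varphi)$, $\eta(\varphi)$ and $W_+'(\varphi)$ against the background solution are indeed compact perturbations dominated by the coercive leading part, so that a Fredholm alternative combined with uniqueness delivers the bounded inverse and thereby legitimises Newton's method.
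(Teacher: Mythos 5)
Your proposal is correct and follows essentially the same route as the paper: existence and the stability bound by marching through time via Theorem~\ref{thm:TD:exSol_initStep} and Theorem~\ref{thm:TD:exSol_twoStep}, and bounded invertibility of $e_y(y,u)$ from the block (lower-)triangular structure in the time index, which reduces everything to the per-step linearised problems already handled in those theorems. Your explicit observations that $H^2(\Omega)\hookrightarrow W^{1,3}(\Omega)$ supplies the hypothesis needed to chain the two-step estimates, and that $W_+'$ induces a $C^1$ Nemytskii operator, are details the paper leaves implicit but are consistent with its argument.
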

\begin{proof}
The existence and stability of the solution for each time instance follows
directly from Theorem \ref{thm:TD:exSol_initStep} and Theorem \ref{thm:TD:exSol_twoStep}.

The equation $e(y,u) = 0$ is of block diagonal form
with nonlinear entries on the diagonal. Thus solving\eqref{eq:TD:eyu_eq_0}
reduces to solving each time instance with given data from the previous time instance.
As argued in Theorem \ref{thm:TD:exSol_initStep} and Theorem
\ref{thm:TD:exSol_twoStep} these nonlinear equations can be solved by Newton's
method. Applying this argument for all time instances we obtain that
$e_y(y,u) \in \mathcal{L}(Y_0,Z)$ has a bounded inverse.
\end{proof}

\begin{lemma}\label{lem:e_J_diff}
The functional $e:Y_0\times U \to Z$ is continuously differentiable with respect
to $y$ and $u$. Furthermore the equation $e(y_0,u) = 0$  for each $u$ admits a
unique solution $y(u)$, and $e_y(y_0,u)$ is continuously invertible.

The functional $J(y_0,u)$ is continuously differentiable with respect to $y_0$
and $u$.
\end{lemma}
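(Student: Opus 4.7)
The solvability and invertibility parts of the lemma are essentially already in hand: the existence and uniqueness of $y(u)$ solving $e(y_0,u)=0$ together with the continuous invertibility of $e_y(y_0,u)$ were established in Theorem~\ref{thm:TD:exSol_pde} (the block–triangular structure in time reduces the question to the per–step results of Theorems~\ref{thm:TD:exSol_initStep} and~\ref{thm:TD:exSol_twoStep}). The only substantive work is therefore to verify continuous Fréchet differentiability of $e$ and $J$.

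For $e$, my plan is to decompose it time step by time step and, within each time step, into summands of the forms appearing in \eqref{eq:TD:chns1_solenoidal_init}--\eqref{eq:TD:chns3_solenoidal}. All terms that are at most trilinear in $(v^m,\varphi^m,\mu^m,u_V,u_B)$, such as $(\rho^{m-1}v^m,\tilde v^m)$, $(2\eta^{m-1}Dv^m,D\tilde v^m)$, $a(\rho^{m-1}v^{m-1}+J^{m-1},v^m,\tilde v^m)$, $(\mu^m\nabla\varphi^{m-1},\tilde v^m)$, $(v^m\nabla\varphi^{m-1},\tilde\varphi^m)$, $(b\nabla\mu^m,\nabla\tilde\varphi^m)$ and the linear control insertions $B_Vu_V^m$ and $\widetilde{B_Bu_B^m}$, are continuous multilinear maps between the relevant Banach spaces (using the stability estimate for $a$ recalled in \eqref{eq:NOT:trilinearForm}, the affine linearity of $\rho$ and $\eta$ on the admissible range in Assumption~\ref{ass:rhoeta_linear}, and the continuity of the extension $\widetilde{\cdot}$). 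Continuous multilinear maps are $C^\infty$, so the only genuinely nonlinear contributions are the Nemytskii terms $\varphi^m\mapsto W_+'(\varphi^m)$ and $\varphi^{m-1}\mapsto W_-'(\varphi^{m-1})$. Here the polynomial growth in Assumption~\ref{ass:psi_boundedPoly} on $W_\pm'$ and $W_\pm''$, combined with the embedding $H^1(\Omega)\cap L^\infty(\Omega)\hookrightarrow L^q(\Omega)$ for every $q\in[2,\infty]$, allows one to invoke the standard Nemytskii operator calculus to conclude that these superposition operators are $C^1$ from $H^1(\Omega)\cap L^\infty(\Omega)$ into $L^2(\Omega)$, with derivative given pointwise by multiplication with $W_\pm''(\varphi)$. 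Summing over $m$ then yields $e\in C^1(Y_0\times U,Z)$.

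For the cost functional $J$, the tracking term $\tfrac12\|\varphi^M-\varphi_d\|^2$ and the quadratic control penalties $\|u_V\|^2$, $\|u_B\|^2$, $\|\nabla u_I\|^2$ are continuous quadratic forms and therefore $C^\infty$. The only delicate term is the obstacle potential contribution $\int_\Omega\epsilon^{-1}W_u(u_I)\dx$ with $W_u\equiv W^\infty$, because $W^\infty$ fails to be $C^1$ on $\mathbb{R}$. This is reconciled by recalling the constraint built into $\mathcal{K}$: for $u_I\in\mathcal{K}$ one has $|u_I(x)|\leq 1$ almost everywhere, and on $[-1,1]$ the obstacle potential reduces to the smooth quadratic $W_u(s)=\tfrac12(1-s^2)$. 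Hence on the admissible set, $u_I\mapsto\int_\Omega\epsilon^{-1}W_u(u_I)\dx$ coincides with a continuous quadratic on $L^2(\Omega)$ and is $C^\infty$ in any admissible direction.

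The step I expect to require the most care is the Nemytskii argument for $W_+'$: one has to ensure that the formal linearisation $h\mapsto W_+''(\varphi)h$ really is the Fréchet derivative in the chosen topology, and that $\varphi\mapsto W_+''(\varphi)$ depends continuously on $\varphi$ as a multiplier between the state and test spaces. This is where Assumption~\ref{ass:psi_boundedPoly} (with the exponent restriction $q\in[2,4]$ in three dimensions) and the $L^\infty$ factor in the state space $Y_0$ are used in an essential way; once this is in place, the remaining arguments are bookkeeping.
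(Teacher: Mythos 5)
The paper states Lemma~\ref{lem:e_J_diff} without proof, so there is no in-text argument to compare against; your proposal supplies the missing reasoning, and it is the natural one, consistent with how the paper uses the lemma (the solvability and invertibility claims are indeed just Theorem~\ref{thm:TD:exSol_pde} restated, and the differentiability claims reduce to multilinearity plus Nemytskii calculus for $W_\pm'$). Your treatment of the obstacle term is also the right reconciliation: on $\mathcal K$ one has $|u_I|\le 1$ a.e., so $W_u\equiv W^\infty$ coincides with the quadratic $\tfrac12(1-s^2)$, and this matches the fact that the paper only ever uses the derivative of $J$ in $u_I$ as a directional derivative in a variational inequality, never as a gradient.

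Two small points deserve explicit mention if you write this up. First, $\rho$ and $\eta$ are by Assumption~\ref{ass:rhoeta_linear} only \emph{piecewise} affine (constant outside $[\varphi_a,\varphi_b]$), hence globally Lipschitz but not $C^1$ in $\varphi$; the claimed smoothness of the terms involving $\rho(\varphi^{m})$, $\eta(\varphi^{m})$ therefore holds only on the set where the states remain in the affine regime $(\varphi_a,\varphi_b)$, which is exactly what Remark~\ref{rm:phi_bounded} is invoked for, and the paper's adjoint computations (which use a constant $\rho'$) silently rely on this. Second, for the Nemytskii step you can dispense with the growth exponents entirely: since the state space carries the $L^\infty(\Omega)$ norm, $W_+\in C^2$ already gives $C^1$-differentiability of $\varphi\mapsto W_+'(\varphi)$ from $L^\infty(\Omega)$ into $L^\infty(\Omega)\subset L^2(\Omega)$ via
\begin{equation*}
W_+'(\varphi+h)-W_+'(\varphi)-W_+''(\varphi)h=\int_0^1\bigl(W_+''(\varphi+th)-W_+''(\varphi)\bigr)h\,dt ,
\end{equation*}
together with uniform continuity of $W_+''$ on bounded intervals; Assumption~\ref{ass:psi_boundedPoly} with $q\le 4$ is only needed if one wants differentiability measured in the bare $H^1(\Omega)\hookrightarrow L^6(\Omega)$ topology. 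Neither point invalidates your argument; they are just the places where the hypotheses enter.
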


Based on Lemma \ref{lem:e_J_diff} we introduce the reduced functional
$\hat J(u) := J(y_0(u),u)$ and state the following theorem.

\begin{theorem}[Existence of an optimal control]\label{thm:TD:exMin}
There exists at least one solution to \ref{prob:TD:Ptau}, i.e. at least one
optimal control.
\end{theorem}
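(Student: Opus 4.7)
The plan is a standard direct method in the calculus of variations. Let $\{u_n\}_{n\in\mathbb{N}} \subset U$, with $u_n = (u_{I,n}, u_{V,n}, u_{B,n})$, be a minimizing sequence for the reduced functional $\hat J$, and denote by $y_n = (v_n^m,\varphi_n^m,\mu_n^m)_{m=1}^M \in Y$ the associated states obtained from Theorem~\ref{thm:TD:exSol_pde}. The aim is to extract subsequences converging to a limit $(y^*,u^*)$, to pass to the limit in $e(y_n,u_n) = 0$, and to exploit weak lower semicontinuity of $\hat J$.

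First I would establish boundedness and weak compactness of the control sequence. From the finiteness of $\liminf_n \hat J(u_n)$ and $\alpha,\alpha_I,\alpha_V,\alpha_B>0$, the Ginzburg--Landau term controls $\|\nabla u_{I,n}\|_{L^2(\Omega)}$, while the constraint $u_{I,n}\in\mathcal K$ yields $\|u_{I,n}\|_{L^\infty(\Omega)}\leq 1$ and the mean-zero condition, hence $\{u_{I,n}\}$ is bounded in $H^1(\Omega)\cap L^\infty(\Omega)$. Similarly $\{u_{V,n}\}$ and $\{u_{B,n}\}$ are bounded in $L^2(0,T;\mathbb{R}^{u_v})$ and $L^2(0,T;\mathbb{R}^{u_b})$ respectively. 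Extracting a subsequence (not relabeled), $u_{I,n}\rightharpoonup u_I^*$ weakly in $H^1(\Omega)$ with $u_{I,n}\to u_I^*$ strongly in $L^p(\Omega)$ for every $p<\infty$ and almost everywhere by Rellich--Kondrachov, $u_{V,n}\rightharpoonup u_V^*$, $u_{B,n}\rightharpoonup u_B^*$ in the respective $L^2$ spaces. Since $\mathcal K$ is convex and closed in $H^1(\Omega)$, it is weakly closed, so $u_I^*\in\mathcal K$ and $u^* \in U$.

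Next I would invoke Theorem~\ref{thm:TD:exSol_pde}, whose stability bound depends polynomially on $\|u_I\|_{H^1}$, $\|(B_V u_V^m)_{m=1}^M\|_{L^2}$ and $\|(B_B u_B^m)_{m=1}^M\|_{H^{1/2}}$, to obtain uniform bounds on $y_n$ in $H_\sigma(\Omega)^M \times H^2(\Omega)^M \times H^2(\Omega)^M$. Passing to a further subsequence gives weak convergence $v_n^m\rightharpoonup v^{*,m}$, $\varphi_n^m\rightharpoonup \varphi^{*,m}$, $\mu_n^m\rightharpoonup \mu^{*,m}$ in these spaces. Compactness of the embeddings $H^2(\Omega)\hookrightarrow\hookrightarrow W^{1,p}(\Omega)\hookrightarrow\hookrightarrow L^\infty(\Omega)$ (valid for suitable $p$ in dimensions $n\in\{2,3\}$) and $H^1(\Omega)\hookrightarrow\hookrightarrow L^q(\Omega)$ for $q<6$ yield, along a subsequence, strong convergence of $\varphi_n^m,\mu_n^m$ in $W^{1,3}(\Omega)\cap L^\infty(\Omega)$ and of $v_n^m$ in $L^q(\Omega)^n$ with $q<6$, as well as pointwise a.e. convergence. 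Then each term defining $e(y_n,u_n)$ passes to the limit: linear terms by weak convergence; the trilinear form $a(\cdot,\cdot,\cdot)$ by combining one strong and two weak convergences; the nonlinearities $\rho(\varphi_n^m)$, $\eta(\varphi_n^m)$, $W'_\pm(\varphi_n^m)$ converge strongly in $L^p$ by continuity, Assumption~\ref{ass:psi_boundedPoly}, the uniform $L^\infty$ bound, and dominated convergence; mixed products such as $\mu_n^m\nabla \varphi_n^{m-1}$ and $\rho(\varphi_n^{m-1})v_n^{m-1}$ combine one strong and one weak factor. The resulting limit relation is $e(y^*,u^*)=0$, and by uniqueness in Theorem~\ref{thm:TD:exSol_pde} the limit $y^*$ coincides with the state $y(u^*)$.

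Finally, I would handle the lower semicontinuity of $\hat J$. The tracking term $\tfrac12\|\varphi_n^M-\varphi_d\|^2$ converges to $\tfrac12\|\varphi^{*,M}-\varphi_d\|^2$ by strong $L^2$-convergence. The gradient term $\int_\Omega \tfrac{\epsilon}{2}|\nabla u_{I,n}|^2\dx$ is weakly lower semicontinuous on $H^1(\Omega)$, and $\|u_{V,n}\|_{L^2}^2$, $\|u_{B,n}\|_{L^2}^2$ are weakly lower semicontinuous on their Hilbert spaces. The obstacle potential contribution $\int_\Omega \epsilon^{-1} W^\infty(u_{I,n})\dx$ converges to $\int_\Omega \epsilon^{-1} W^\infty(u_I^*)\dx$ by the strong $L^2$-convergence of $u_{I,n}$, the pointwise constraint $|u_{I,n}|\leq 1$, the continuity of $W^\infty$ on $[-1,1]$, and dominated convergence. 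Hence $\hat J(u^*)\leq \liminf_{n\to\infty}\hat J(u_n) = \inf_{u\in U}\hat J(u)$, so $u^*$ is an optimal control. The main obstacle is passing to the limit in the nonlinear coupling terms (in particular $\mu^m\nabla\varphi^{m-1}$, the $\varphi$-dependent densities and viscosities, and $W'_+(\varphi^m)$); this is overcome by combining the $H^2$-regularity of $\varphi^m,\mu^m$ from Theorem~\ref{thm:TD:exSol_pde} with the compact Sobolev embeddings in dimension $n\in\{2,3\}$.
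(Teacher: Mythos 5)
Your proposal is correct and follows essentially the same route as the paper: the direct method with a minimizing sequence, coercivity of $\hat J$ giving weak compactness of the controls, weak closedness of the convex set $\mathcal K$, passage to the limit in the time-discrete state equation, and weak lower semicontinuity of the cost. The only difference is one of detail: you spell out the limit passage in $e(y,u)=0$ (compact embeddings, strong/weak pairings in the trilinear and coupling terms), which the paper delegates to \cite[Thm.~6]{GarckeHinzeKahle_CHNS_AGG_linearStableTimeDisc}.
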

\begin{proof}
Since $\hat J$ is bounded from below, there exists a minimizing sequence
$u_l$ with $\hat J(u_l) \to \hat J^\star$ and
$\hat J^\star := \inf_u \hat J(u)$.

Since $\hat J$ is radially unbounded, there exists a closed ball $V\subset U$,
bounded, convex and closed such that $u_l\subset V$ and thus there exists
a weakly convergent subsequence, in the following again denoted by $(u_l)$.
Since closed convex sets are weakly closed, $(u_l) \rightharpoonup u_\star \in V$
holds.
Let $y_l = (v_l,\varphi_l,\mu_l)$ denote the unique solution of
\eqref{eq:TD:chns1_solenoidal_init}--\eqref{eq:TD:chns3_solenoidal} for $u_l$.
Then $y_l \rightharpoonup y_\star \in Y$, with $y_\star = y_\star(u_\star)$, and
$(u_\star,y_\star)$ solves
\eqref{eq:TD:chns1_solenoidal_init}-\eqref{eq:TD:chns3_solenoidal}.
This can be shown as in
\cite[Thm. 6]{GarckeHinzeKahle_CHNS_AGG_linearStableTimeDisc}.

It remains to show, that $J(u_\star) = J^\star$.
Since $y_l \rightharpoonup y_\star$ especially
$\varphi_l^M\rightharpoonup\varphi^M_\star$ and thus by the lower weak semi
continuity of norms
together with an embedding argument for $W_u$
we have
\begin{align*}
  \hat J(u_\star) \leq \lim\inf \hat J(u_l) = \hat J^\star.
\end{align*}
Thus $u_\star$ is an optimal control.
\end{proof}

We next derive first order optimality conditions in the abstract setting.
We introduce an adjoint state $p\in Z^\star$ and the Lagrangian as
\begin{align*}
  L(y,p,u) := J(y,u) - \left<p,e(y,u)\right>_{Z^{\star},Z}.
\end{align*}

By  Lagrangian calculus we then obtain the following first order
optimality conditions.

\begin{theorem}[First order optimality conditions in abstract setting]
\label{thm:TD:OPT}
Let $u\in U$, $y\in Y$ be an optimal solution to \ref{prob:TD:Ptau}. Then
there exists an adjoint state $p\in Z^\star$ and the triple $(u,y,p)$ fulfills
the following first order optimality conditions:
\begin{align}
  e(y,u)&= 0 \in Z,\label{eq:OPT:primal}\\
  (e_y(y,u))^\star  p &= J_y(y,u) \in Y_0^\star,\label{eq:OPT:dual}\\
  \left<J_u(y,u) + (e_u(y,u))^\star p,w-u\right>_{U^\star,U} &= 0 \quad \forall w \in 
  U\label{eq:OPT:opt}.
\end{align}
\end{theorem}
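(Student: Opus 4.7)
The plan is a standard Lagrangian / adjoint-state derivation, supported by the regularity and invertibility packaged in Lemma~\ref{lem:e_J_diff}.

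Condition \eqref{eq:OPT:primal} is immediate from feasibility of the optimal $(u,y)$. For the adjoint equation \eqref{eq:OPT:dual}, I would exploit that by Lemma~\ref{lem:e_J_diff} the operator $e_y(y,u)\in \mathcal{L}(Y_0,Z)$ is a continuous linear isomorphism, hence so is its Banach adjoint $e_y(y,u)^\star : Z^\star \to Y_0^\star$. Since $J_y(y,u)\in Y_0^\star$, the equation $e_y(y,u)^\star p = J_y(y,u)$ admits a unique solution $p \in Z^\star$, which we take as the adjoint state.

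To establish \eqref{eq:OPT:opt}, I would invoke the implicit function theorem for $e(y,u)=0$: its hypotheses are delivered by Lemma~\ref{lem:e_J_diff}, so the control-to-state map $u \mapsto y(u)$ is $C^1$ in a neighborhood of the optimum, with sensitivity $y'(u) = -e_y(y,u)^{-1} e_u(y,u)$. Consequently the reduced functional $\hat J(u) := J(y(u),u)$ is $C^1$, and the chain rule combined with the adjoint equation yields
\begin{align*}
  \hat J'(u)\,\delta u
  &= J_u(y,u)\,\delta u + J_y(y,u)\,y'(u)\,\delta u \\
  &= \left<J_u(y,u) + (e_u(y,u))^\star p,\,\delta u\right>_{U^\star,U},
\end{align*}
where the second line uses $J_y(y,u) = e_y(y,u)^\star p$ and the sign convention fixed by the Lagrangian $L(y,p,u) = J(y,u) - \left<p,e(y,u)\right>_{Z^\star,Z}$. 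Because $u$ minimizes $\hat J$ over the convex set $U$, the standard first-order necessary condition yields the variational form \eqref{eq:OPT:opt} in the tested directions $w-u$, $w\in U$.

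The main structural obstacle, the invertibility of $e_y(y,u)$ together with the $C^1$ regularity of $e$, is already handled by Lemma~\ref{lem:e_J_diff} (with existence of the state from Theorem~\ref{thm:TD:exSol_pde}), so the proof reduces to a bookkeeping step in the abstract adjoint calculus. A minor technical subtlety is that $U$ contains the pointwise constraint $|u_I|\leq 1$ inherited from $\mathcal{K}$, so \eqref{eq:OPT:opt} is genuinely a variational inequality at points where this bound is active and should be read as the standard first-order characterization on the tangent cone of $U$ at $u$.
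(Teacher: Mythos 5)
Your argument is correct and is essentially the paper's proof written out in full: the paper simply checks via Lemma~\ref{lem:e_J_diff} that $e$ and $J$ satisfy the hypotheses of \cite[Cor.~1.3]{HinzePinnauUlbrichUlbrich} and cites that corollary, whose proof is precisely the implicit-function-theorem/adjoint-calculus chain you carry out (feasibility, isomorphy of $e_y(y,u)^\star$ giving a unique $p$, and the reduced-gradient identity combined with the first-order condition on the convex admissible set). Your closing observation that \eqref{eq:OPT:opt} should be read as a variational inequality wherever the bound $|u_I|\leq 1$ from $\mathcal K$ is active is consistent with how the paper itself later states the condition for the initial-value control in \eqref{eq:TD:opt_UI}.
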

\begin{proof}
From Lemma \ref{lem:e_J_diff} we have that $e$ and $J$ fulfill the assumptions
of \cite[Cor. 1.3]{HinzePinnauUlbrichUlbrich}, which in turn asserts the claim.
\end{proof}

To state the first order optimality system we introduce Lagrange multiplier $p
\in Z^\star$, $p = (p_v^m,p_\varphi^m,p_\mu^m)_{m=1}^M \in H_{0,\sigma}^M \times
H^1(\Omega)^M \times H^1(\Omega)^M$ and define the Lagrangian
\begin{align*}
  L:U
  &\times (H_{0,\sigma})^M
  \times ( H^1(\Omega) \cap L^\infty(\Omega) )^M
  \times W^{1,3}(\Omega)^M\\
  &\times (H_{0,\sigma})^M
  \times H^1(\Omega)^M
  \times H^1(\Omega)^M
  \to \mathbb{R}
\end{align*}
as
\begin{align*}
  &L(u,v_0^m,\varphi^m,\mu^m,p_v^m,p_\varphi^m,p_\mu^m) :=
  \frac{1}{2}
  \|\varphi^M-\varphi_d\|_{L^2(\Omega)}^2 \\
  &+ \frac{\alpha}{2}
  \left(
  \alpha_V \|u_V\|^2_{L^2(0,T;\mathbb{R}^{u_v})}
  +\alpha_B \|u_B\|^2_{L^2(0,T;\mathbb{R}^{u_b})}
  +\alpha_I\left(\int_\Omega \frac{\delta}{2}|\nabla u_I|^2
  + \frac{1}{\delta}W_u(u_I)\dx\right)
  \right)\\
  &-\sum_{m=2}^M
  \left[
  \frac{1}{\tau}\left(  \frac{\rho^{m-1}+\rho^{m-2}}{2}
  (v_0^m + \widetilde{B_Bu_B^m})
  -\rho^{m-2}(v_0^{m-1} +  \widetilde{B_Bu_B^{m-1}})
  ,p_v^{m} \right)
  \right.\\
  &+a(\rho^{m-1}(v_0^{m-1} +  \widetilde{B_Bu_B^{m-1}}) +J^{m-1},
  (v_0^m + \widetilde{B_Bu_B^m}),p_v^m)\\
 & +(2\eta^{m-1}D(v_0^m + \widetilde{B_Bu_B^m}),Dp_v^m) \\
  & \left.-(\mu^m\nabla \varphi^{m-1},p_v^m)
  -(\rho^{m-1}K,p_v^m)
  - (B_Vu_V^m,p_v^m)
  \vphantom{\frac{1}{2\tau}}\right]\\
  &-\sum_{m=2}^M
  \left[
  \frac{1}{\tau}(\varphi^m-\varphi^{m-1},p_\varphi^m)
  + ((v_0^m + \widetilde{B_Bu_B^m})\nabla  \varphi^{m-1},p_\varphi^m)
  + (b\nabla \mu^m,\nabla p_\varphi^m)
  \right]\\
  &-\sum_{m=2}^M
  \left[
  \sigma\epsilon(\nabla \varphi^m,\nabla p_\mu^m) - (\mu^m,p_\mu^m) +
  \frac{\sigma}{\epsilon}
  \left(
  W'_+(\varphi^m) + W'_-(\varphi^{m-1}),p_\mu^m
  \right)
  \right]\\
  &-\left[\frac{1}{\tau}\left(  \frac{\rho^{1}+\rho^{0}}{2}
  (v^1+\widetilde{B_Bu_B^1})-\rho^{0}v^{0},p_v^{1}
  \right)+a(\rho^{1}v^{0}+J^{1}, (v^1+\widetilde{B_Bu_B^1}),p_v^1)
   \right.\\
  & \left.+(2\eta^{1}D (v^1+\widetilde{B_Bu_B^1}),Dp_v^1)-(\mu^1\nabla u_I,p_v^1)
  -(\rho^{0}K,p_v^1)
  - (B_Vu_V^1,p_v^1)
  \vphantom{\frac{1}{2\tau}}\right]\\
  & - \left[ \frac{1}{\tau}(\varphi^1-u_I,p_\varphi^1)
  + (v^0\nabla  u_I,p_\varphi^1)
  + (b\nabla \mu^1,\nabla p_\varphi^1)
  \right]\\
  &- \left[ \sigma\epsilon(\nabla \varphi^1,\nabla p_\mu^1) - (\mu^1,p_\mu^1) +
  \frac{\sigma}{\epsilon}
  \left(
  W'_+(\varphi^1) + W'_-(u_I),p_\mu^1
  \right)
  \right].
\end{align*}
Here again $\rho^m := \rho(\varphi^m)$, $\eta^m := \eta(\varphi^m)$ and
especially $\rho^0 := \rho(u_1)$, $\eta^0 := \eta(u_1)$.
In the following we write $v^m := v^m_0 + \widetilde{B_Bu_B^m}$.

The optimality system is now given by $(DL(x),\tilde x -x) \geq 0$, where
$x$ abbreviates all arguments of $L$ and $\tilde x$ denotes an admissible direction.
For all components of $x$ except $u_I$ it even holds $(DL(x),\tilde x) = 0$ 
since there no further constraints apply, while $U_I$ is a convex subset of $H^1(\Omega)\cap
L^\infty(\Omega)$.

\medskip
\noindent\textbf{Derivative with respect to the velocity}\\
The derivative with respect to $v_0^m$ for $m=2,\ldots,M$ into a direction
$\tilde v \in H_{0,\sigma}$ is given by
\begin{equation}
  \begin{aligned}
    (D_{v^m}L(\ldots,&v^m,\ldots),\tilde v)=\\
    & -
    \frac{1}{\tau}\left(
    (\frac{\rho^{m-1}+\rho^{m-2}}{2}\tilde v,p_v^{m})
    -(\rho^{m-1}\tilde v,p_v^{m+1})
    \right)\\
    &-a(\rho^m\tilde v,v_0^{m+1} + \widetilde{B_Bu_B^{m+1}},p_v^{m+1})\\
    &-a(\rho^{m-1}(v_0^{m-1} + \widetilde{B_Bu_B^{m-1}})+J^{m-1},\tilde v,p_v^m)\\
    &-(2\eta^{m-1}D\tilde v,Dp_v^m)
    -(\tilde v\nabla \varphi^{m-1},p_\varphi^m)= 0.
  \end{aligned}
  \label{eq:TD:adj:Dv}
\end{equation}
For $m=1$ we get
\begin{equation}
  \begin{aligned}
    (D_{v^1}L(&\ldots,v^1,\ldots),\tilde v)=\\
    &\frac{1}{\tau}(\rho^0\tilde v,p_v^2)
    -a(\rho^1\tilde v,v_0^2 + \widetilde{B_Bu_B^2},p_v^2)\\
    &-\frac{1}{2\tau}((\rho^1+\rho^0){\tilde v},p_v^1)
    -a(\rho^1(v_0^0 + \widetilde{B_Bu_B^0})+J^1,{\tilde v},p_v^1)
    -(2\eta^1D\tilde v,Dp_v^1)=  0.
  \end{aligned}
  \label{eq:TD:adj:Dv_init}
\end{equation}
Note that for notational convenience here we introduce
artificial variables $v_0^{M+1}$, $p_v^{M+1}$, $u_B^{M+1}$
and set them to $v_0^{M+1} \equiv p_v^{M+1} \equiv 0$, $u_B^{M+1} = 0$.

\begin{remark}
Note that we derive the adjoint system in the solenoidal setting. Introducing
a variable $p$ for the pressure in the primal equation leads to an additional
adjoint variable $p_p$ for the adjoint pressure and to an additional term
$(-div{\tilde v},p_p)$.
\end{remark}

\medskip

\noindent \textbf{Derivative with respect to the chemical potential}\\
The derivative with respect to the chemical potential  for
$m=2,\dots,M$ in a direction $\tilde \mu \in W^{1,3}(\Omega)$ is
\begin{equation}
  \begin{aligned}
    (D_{\mu^m}&L(\ldots,\mu^m,\ldots),\tilde \mu) =\\
    &-a(J^m_{\mu^{m}}\tilde \mu,v^{m+1},p_v^{m+1})
    + (\tilde \mu\nabla \varphi^{m-1},p_v^{m})
    -(b\nabla \tilde \mu,\nabla p_\varphi^m)
    +(\tilde \mu,p_\mu^m)=0.
  \end{aligned}
  \label{eq:TD:adj:Dmu}
\end{equation}
For $m=1$ the equations is
\begin{equation}
  \begin{aligned}
    (D_{\mu^1}&L(\ldots,\mu^1,\ldots),\tilde \mu) =\\
    & -a(J^1_\mu{\tilde \mu},v^2,p_v^2)
    -a(J^1_\mu{\tilde \mu},v^1,p_v^1)
     + (\tilde\mu\nabla u_I,p_v^1) 
     -(b\nabla \tilde \mu,\nabla p_\varphi^1)
     +(\tilde \mu,p_\mu^1) = 0.
  \end{aligned}
  \label{eq:TD:adj:Dmu_init}
\end{equation}
Here for $m=1,\ldots,M$ we abbreviate $J^m_\mu{\tilde \mu} = -\rho_\delta b
\nabla {\tilde \mu}$, and for notational convenience we introduce
artificial variables $v^{M+1} = v_0^{M+1} + \widetilde{B_Bu_B^{M+1}}$,
$p_v^{M+1}$, and set them to $v^{M+1} \equiv p_v^{M+1} \equiv 0$.

The above also contains the boundary condition
\begin{align*}
  \nabla p^m_\varphi\cdot \nu_\Omega = 0\quad
  m=1,\ldots,M,
\end{align*}
in weak form, which for smooth $p_\varphi^m$ follows from integration by parts.

\medskip

\noindent \textbf{Derivative with respect to the phase field}\\
The derivative with respect to  the phase field $\varphi^m$ in a direction
$\tilde \varphi \in H^1(\Omega)\cap L^\infty(\Omega)$ is for  $m=2,\dots,M$
\begin{equation}
\begin{aligned}
  (D_{\varphi^m}&L(\ldots,\varphi^m,\ldots),\tilde \varphi) =\\
  &\delta_{mM}(\varphi^m-\varphi_d,{\tilde\varphi})
  -\frac{1}{\tau}
  \left(\rho'
  \frac{v^{m+1}p_v^{m+1}+v^{m+2}p_v^{m+2}}{2},
  \tilde \varphi
  \right)
  +
  \frac{1}{\tau}\left(
  \rho'v^{m+1}p_v^{m+2},\tilde \varphi
  \right)
  \\
  &-a(\rho'\tilde \varphi v^{m},v^{m+1},p_v^{m+1})
  -(2\eta'\tilde\varphi Dv^{m+1},Dp_v^{m+1})\\
  & +(\mu^{m+1}\nabla\tilde\varphi,p_v^{m+1})
  +(\rho'\tilde\varphi K,p_v^{m+1})\\
  &-\frac{1}{\tau}
  \left(
  (\tilde\varphi,p_\varphi^m) - (\tilde\varphi,p_\varphi^{m+1})
  \right)
  -(v^{m+1}\nabla \tilde \varphi,p_\varphi^{m+1})\\
  &-\sigma\epsilon(\nabla \tilde \varphi,\nabla p_\mu^m)
  -\frac{\sigma}{\epsilon}(W''_+(\varphi^m)\tilde \varphi,p_\mu^m)
  -\frac{\sigma}{\epsilon}(W''_-(\varphi^m)\tilde\varphi,p_\mu^{m+1})=0,
\end{aligned}
\label{eq:TD:adj:Dphi}
\end{equation}
where $\delta_{mM}$ denotes the Kronecker delta.
For $m=1$ we get
\begin{equation}
\begin{aligned}
  (D_{\varphi^1}&L(\ldots,\varphi^1,\ldots),\tilde \varphi) =\\
  &
  -\frac{1}{\tau}(\frac{\rho^\prime}{2}{\tilde \varphi},v^2p_v^2 )
  -a(\rho^\prime{\tilde \varphi}v^1,v^2,p_v^2)
  -a(\rho^\prime{\tilde \varphi}v^0,v^1,p_v^1) \\
  &-(2\eta^\prime{\tilde \varphi}Dv^2,Dp_v^2)
  -(2\eta^\prime{\tilde \varphi}Dv^1,Dp_v^1)
  -(\mu^2\nabla {\tilde \varphi} p_v^2)
  -(\rho^\prime{\tilde \varphi}K,p_v^2)\\
  & +\frac{1}{\tau}(\tilde \varphi,p_\varphi^2)
  -(v^2\nabla {\tilde \varphi},p_\varphi^2)
  -\frac{\sigma}{\epsilon}
  (W_-^{\prime\prime}(\varphi^1)\tilde  \varphi,p_\mu^2)\\
  %
  &-\frac{1}{\tau}( \frac{\rho^\prime{\tilde \varphi}}{2}v^1,p_v^1)
  -\frac{1}{\tau}({\tilde \varphi},p_\varphi^1)
  -\sigma\epsilon(\nabla {\tilde \varphi},\nabla p_\mu^1)
  -\frac{\sigma}{\epsilon}(W_+^{\prime\prime}(\varphi^1){\tilde
  \varphi},p_\mu^1) = 0.
\end{aligned}
\label{eq:TD:adj:Dphi_init}
\end{equation}
Here for notational convenience we introduce
artificial variables $v^{M+1} = v_0^{M+1} + \widetilde{B_Bu_B^{M+1}}$,
$v^{M+2} = v_0^{M+2} + \widetilde{B_Bu_B^{M+2}}$,
$p_v^{M+1}$, $p_v^{M+2}$, and set them to zero.

The above also contains the boundary condition
\begin{align*}
  \nabla p_\mu^m\cdot \nu_\Omega = 0\quad
  m=1,\ldots,M,
\end{align*}
in weak form, which for smooth $p_\mu^m$ follows from integration by parts.


\medskip

\noindent\textbf{Derivative with respect to the control}\\
Finally we calculate the derivative with respect to the control for the three
parts of the control space.

For a test direction $w \in U_V$ we have
\begin{align*}
  (D_{u_V}&L(u,\ldots),w) =
  \alpha\alpha_V \int_I(u_V,w)_{\mathbb{R}^{u_v}}\dt
  +\sum_{m=1}^M(B_V w^m,p_v^m)_{L^2(\Omega)} =0,
\end{align*}
and thus the optimality condition is
\begin{align}
  \alpha\tau \alpha_Vu_V^m + B_V^*p_v^m = 0
  \in \mathbb{R}^{u_v} \quad m=1,\ldots,M
  \label{eq:TD:opt_UV}
\end{align}
Here $B_V^\star p_v^m$ is defined as
\begin{align*}
  B_V^\star p_v^m :=
  ((f_l,p_v^m)_{L^2(\Omega)^n})_{l=1}^{u_v}.
\end{align*}

\bigskip

Concerning the derivative with respect to $u_B$ we have for a test function
$w \in U_B$
\begin{equation}
\begin{aligned}
  (D_{u_B}&L(u,\ldots), w) =
  \alpha\alpha_B \int_I(u_B,w)_{\mathbb{R}^{u_b}}\dt
  -\tau^{-1}\left(\frac{\rho^1+\rho^0}{2} \widetilde{B_B
  w^1},p_v^1\right) \\
  &-a(\rho^1v^0 + J^1,\widetilde{B_B w^1},p_v^1)
  -2(\eta^1D\widetilde{B_B w^1},Dp_v^1)\\
  &-\sum_{m=2}^{M} \left[
  \tau^{-1}\left(\frac{\rho^{m-1}+\rho^{m-2}}{2}
  \widetilde{B_B w^m},p_v^m\right)
  -\left(\rho^{m-2}\widetilde{B_B w^{m-1}},p_v^{m}\right)
  \right.\\
  &\left.
  +a(\rho^{m-1}v^{m-1} + J^{m-1},\widetilde{B_B w^m},p_v^m)
  +a(\rho^{m-1}\widetilde{B_B w^{m-1}},v^{m},p_v^{m}) \right.\\
  &\left. \vphantom{\frac{\rho^m}{2}}
  +2(\eta^{m-1}D\widetilde{B_B w^m},Dp_v^m)
  +(\widetilde{B_B w^m}\nabla \varphi^{m-1},p_\varphi^m)
  \right]
  =0.
\end{aligned}
\label{eq:TD:opt_UB}
\end{equation}

For smooth solutions we use the derivative with respect the velocity, the no-flux boundary
condition for $v^m$ as well as for $\mu^m$ and  integration by parts to observe
\begin{align*}
  (D_{u_B}L(u,\ldots), w)& =
  \alpha\alpha_B \int_I(u_B,w)_{\mathbb{R}^{u_b}}\dt\\
  &- \sum_{m=2}^M \int_{\partial\Omega} 2 \eta^{m-1}Dp_v^m\cdot \nu_\Omega
  B_Bw^m\ds
  -\int_{\partial\Omega} 2 \eta^{1}Dp_v^1\cdot \nu_\Omega
  B_Bw^1\ds
\end{align*}
and thus the optimality condition in a strong formulation is
\begin{equation}
\begin{aligned}
  \alpha\alpha_B \tau u_B^m
  - \left(
  (2\eta^{m-1}Dp_v^m \cdot \nu_\Omega,g^l)
  _{H^{-1/2}(\partial\Omega),H^{1/2}(\partial\Omega)}\right)_{l=1}^{u_b} = 0 \in
  \mathbb{R}^{u_b}
  \quad
  \forall m=2,\ldots,M,\\
   \alpha\alpha_B \tau u_B^1
  - (2\eta^{1}Dp_v^1 \cdot \nu_\Omega,g^l)
  _{H^{-1/2}(\partial\Omega),H^{1/2}(\partial\Omega)} = 0 \in
  \mathbb{R}^{u_b} .
\end{aligned}
\end{equation}

\bigskip

The derivative with respect to the initial condition $u_I$ in a
direction $w-u_I \in U_I$ is
\begin{equation}
\begin{aligned}
  (D_{u_I}&L(u,\ldots),w-u_I)_{U_I^\star,U_I} =
  \frac{\alpha}{2}\alpha_I
  \left(
  \epsilon(\nabla u_I,\nabla (w-u_I))
  +  \epsilon^{-1}\int_\Omega W_u^\prime(u_I)(w-u_I) \dx
  \right)\\
  &-\frac{1}{2\tau}\left(\rho'(w-u_I) v^2,p_v^2\right)
  +\frac{1}{\tau}\left(\rho'(w-u_I) v^1,p_v^2\right)\\
  &
  -\frac{1}{2\tau}\left(\rho'(w-u_I) v^1,p_v^1\right)
  +\frac{1}{\tau}\left(\rho'(w-u_I) v^0,p_v^1\right)\\
  &+(\mu^1\nabla (w-u_I),p_v^1)
  +(\rho'(w-u_I) K,p_v^1)\\
  &+\frac{1}{\tau}((w-u_I),p_\varphi^1)
  -(v^0\nabla (w-u_I),p_\varphi^1)
  -\frac{\sigma}{\epsilon}(W^{\prime\prime}_-(u_I)(w-u_I),p_\mu^1) \geq 0.
\end{aligned}
\label{eq:TD:opt_UI}
\end{equation}
We note that $u_I \in H^1(\Omega) \cap L^\infty(\Omega)$ and thus that there exists no gradient
representation for $D_{u_I}L$. This is reflected later in our numerical approach.

\begin{remark}\label{rm:TD:uVB_discrete__uI_Linfty}
  From \eqref{eq:TD:opt_UV} we see, that in fact $u_V$ has a discrete structure
  with respect to time, namely it is piecewise constant over time intervals, as
  the adjoint variable $p_v$ is. The same holds for $u_B$.
\end{remark}


\section{The fully discrete setting}\label{sec:FullyDiscrete}
We next use finite elements to discretize the optimal control problem
\ref{prob:TD:Ptau} in space.
For this we use finite elements on locally adapted meshes.
At time instance $t_m$, $m=1,\ldots,M$ we use a  quasi-uniform,
triangulation of $\overline \Omega$ with $NT_m$ triangles denoted by
$\mathcal T_m = \{T_i\}_{i=1}^{NT_m}$ fulfilling $ \overline\Omega=
\bigcup_{i=1}^{NT_m} \overline T_i $.

On $\mathcal T_{m}$ we define the following finite element spaces:
\begin{align*}
  \mathcal{V}_m^{1}
  =& \{v\in C(\mathcal{T}_{m}) \, |
  \, v|_T \in \mathcal  P^1(T)\, \forall T\in  \mathcal{T}_{m}\},\\
  \mathcal{V}_m^{2} =& \{v\in C(\mathcal{T}_{m})^n \, |
  \, v|_T \in \mathcal  P^2(T)^n\, \forall T\in  \mathcal{T}_{m} \},
\end{align*}
where $\mathcal P^l(S)$ denotes the space of polynomials up to order $l$ defined
on $S$.
We note that by construction $\mathcal V^1_m \subset
W^{1,\infty}(\mathcal T_m)$ and
$\mathcal V^2_m \subset
W^{1,\infty}(\mathcal T_m)^n$
holds.
We introduce the discrete analog to the space
$H_{\sigma}(\Omega)$:
\begin{align*}
  H_{\sigma,m} &:=
  \{ v\in \mathcal V^2_m\,|\, (\mbox{div}v,q) =  0\,
  \forall q\in \mathcal V^1_m \cap L^2(\Omega)\},
\end{align*}
and
\begin{align*}
  H_{0,\sigma,m} :=
  \{ v\in H_{\sigma,m}\,|\, \gamma(v) = 0 \}.
\end{align*}

We further  introduce a linear $H^1$-stable projection operator
${P}^{m} : H^1(\Omega) \to \mathcal{V}^1_m$
satisfying
\begin{align*}
  \|P^mv\|_{L^p(\Omega)} \leq C\|v\|_{L^p(\Omega)},
  \mbox{  and  }
  \|\nabla P^{m}v \|_{L^r(\Omega)} \leq C\|\nabla v\|_{L^r(\Omega)},
\end{align*}
for $v \in H^1(\Omega)$ with $r \in [1,2]$ and $p\in [1,6]$ if $n=3$, and $p\in [1,\infty)$ if
$n=2$ and
\begin{align*}
  \|P^m v - v\|_{H^1(\Omega)} \to 0
\end{align*}
for $h\to 0$ for $v\in H^2(\Omega)$
Typically examples are the Cl\'ement operator  or,
by restricting the preimage to $C(\overline \Omega) \cap H^1(\Omega)$, the Lagrangian interpolation operator.

Using these spaces we state the discrete counterpart of
\eqref{eq:TD:chns1_solenoidal_init}--\eqref{eq:TD:chns3_solenoidal}: \\

Let $u\in U$ and $v_0 \in H_{\sigma} \cap L^\infty(\Omega)^n$ be given.\\
\noindent \textit{Initialization for $m=1$:}\\
Set $\varphi_h^0 := u_I$, $v^0 := v_0$.
Find
$v^{1}_h \in  H_{\sigma,1}$,
$\gamma(v^1_h) = \Pi^1(B_Bu_B^1)$,
$\varphi^{1}_h\in \mathcal V^1_1$,
$\mu^{1}_h \in \mathcal V^1_1$
such that for all
$w \in  H_{0,\sigma,1}$,
$\Psi \in \mathcal V^1_1$,
$\Phi \in  \mathcal V^1_1$
it holds:
\begin{align}
  \tau^{-1}\left(\frac{1}{2}(\rho_h^{1}+\rho_h^0)v^{1}_h-\rho_h^{0}v^{0},w\right)
  + a(\rho_h^{1}v^{0}+J_h^{1},v_h^{1},w)\nonumber\\
  +(2\eta_h^{1}Dv_h^{1},D w)-(\mu^{1}_h\nabla\varphi_h^{0}+\rho_h^{0} g,w)
  -(B_Vu_V^1,w)
  &= 0,\label{eq:FD:chns1_solenoidal_init}\\
  \frac{1}{\tau}(\varphi^{1}_h-{P}^{1}\varphi_h^{0},\Psi)
  +(b\nabla \mu^{1}_h,\nabla \Psi)
  +(v^{0}\nabla \varphi_h^{0}, \Psi)
  &=0,\label{eq:FD:chns2_solenoidal_init}\\
  \sigma\epsilon(\nabla \varphi^{1}_h,\nabla \Phi)
  +\frac{\sigma}{\epsilon}(W_+^\prime(\varphi^{1}_h)+W^\prime_-({P}^{1}\varphi_h^{0}),\Phi)-(\mu^{1}_h,\Phi)
  &=0, \label{eq:FD:chns3_solenoidal_init}
\end{align}
where $J^1 := - \rho_\delta b \nabla \mu_h^1$.

\medskip

\noindent \textit{Two-step scheme for $m>1$:}\\
Given
$\varphi_h^{m-2}\in \mathcal{V}^1_{m-2}$,
$\varphi_h^{m-1}\in \mathcal{V}^1_{m-1}$,
$\mu_h^{m-1}\in \mathcal{V}^1_{m-1}$,
$v_h^{m-1}\in  H_{\sigma,m-1}$,
find
$v^{m}_h \in  H_{\sigma,m}$,
$\gamma(v^m_h) = \Pi^m(B_Bu_B^m)$,
$\varphi^{m}_h\in \mathcal V^1_m$,
$\mu^{m}_h \in \mathcal V^1_m$
such that for all
$w \in  H_{0,\sigma,m}$,
$\Psi \in \mathcal V^1_m$,
$\Phi \in  \mathcal V^1_m$
it holds:
\begin{align}
  \tau^{-1}
  \left(
  \frac{1}{2}(\rho_h^{m-1}+\rho_h^{m-2})v^{m}_h-\rho_h^{m-2}v_h^{m-1},w
  \right)
  + a(\rho_h^{m-1}v_h^{m-1}+J_h^{m-1},v_h^{m},w)\nonumber\\
  +(2\eta_h^{m-1}Dv^{m}_h,D w)-(\mu^{m}_h\nabla\varphi_h^{m-1}+\rho_h^{m-1} g,w)
  -(B_Vu_V^m,w)
  &= 0,\label{eq:FD:chns1_solenoidal}\\
  \frac{1}{\tau}(\varphi^{m}_h-{P}^{m}\varphi_h^{m-1},\Psi)
  +(b\nabla \mu^{m}_h,\nabla \Psi)
  +(v^{m}_h\nabla \varphi_h^{m-1}, \Psi)
  &=0,\label{eq:FD:chns2_solenoidal}\\
  %
  %
  \sigma\epsilon(\nabla \varphi^{m}_h,\nabla \Phi)
  +\frac{\sigma}{\epsilon}(W_+^\prime(\varphi^{m}_h)+W^\prime_-({P}^{m}\varphi_h^{m-1}),\Phi)-(\mu^{m}_h,\Phi)
  &=0, \label{eq:FD:chns3_solenoidal}
\end{align}
where  $J_h^{m-1} := - \rho_\delta b\nabla \mu_h^{m-1}$.

We introduce 
\begin{align*}
  \mathcal V^2_{m,b} := \{ v|_{\partial\Omega} \,|\, v \in \mathcal{V}^2_m,\, 
  \int_{\partial\Omega} v|_{\partial\Omega} \cdot \nu_\Omega\, \ds = 0 \}
\end{align*}
and define $\Pi^m$ for $m=1,\ldots,M$ as the $L^2(\partial\Omega)$ projection onto the trace space
of $\mathcal V^2_{m,b}$. This projection is used to incorporate the boundary data and fulfills
$\|\Pi^mg-g\|_{L^2(\partial\Omega)} \to 0$ for all $g\in H^{1/2}(\partial\Omega)$ with
$\int_{\partial\Omega} g\cdot\nu_\Omega\ds = 0$.

We require bounds with respect to $W^{1,p}(\Omega)$-norms for the solution of
\eqref{eq:FD:chns1_solenoidal_init}--\eqref{eq:FD:chns3_solenoidal} and prepare these with the
following lemmas.

\begin{lemma}
\label{lem:FD:neumannLaplace_supremum}
For all  $1<p<\infty$ there exists a continuous function $C(p)$, such that
\begin{align*}
  \|\nabla u\|_{L^p(\Omega)} 
  \leq C(p) \sup_{
  \stackrel{\eta\in L^q(\Omega), \eta  \neq  0}{(\eta,1)=0}} \frac{(\nabla u,\nabla \eta)}{\|\nabla \eta\|_{L^q(\Omega)}
  },
\end{align*}
where $\frac{1}{p}+\frac{1}{q}=1$.
Further, from the generalized Poincar\'e inequality, 
\cite[Thm. 8.16]{Alt_lineareFunktionalAnalysis},
 we obtain
$ \|\eta\|_{W^{1,q}(\Omega)} \leq C\|\nabla \eta\|_{L^q(\Omega)}$ 
and thus
\begin{align*}
    \|\nabla u\|_{L^p(\Omega)} 
  \leq C(p) \sup_{
  \stackrel{\eta\in L^q(\Omega), \eta  \neq  0}{(\eta,1)=0}} \frac{(\nabla u,\nabla
  \eta)}{\|\eta\|_{W^{1,q}(\Omega)} }.
\end{align*}

\end{lemma}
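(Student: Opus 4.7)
The plan is to establish the first inequality by $L^p$--$L^q$ duality combined with the Helmholtz decomposition in $L^q(\Omega)^n$, and then to read off the second inequality from the generalized Poincar\'e inequality on mean-zero functions.

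First, I would invoke the dual characterization
\begin{align*}
  \|\nabla u\|_{L^p(\Omega)}
  = \sup_{w \in L^q(\Omega)^n,\,\|w\|_{L^q(\Omega)}\leq 1}
  \int_\Omega \nabla u \cdot w \dx,
\end{align*}
so the task reduces to controlling $(\nabla u,w)$ in terms of the supremum appearing in the statement of the lemma for every test vector field $w$ with unit $L^q$-norm. The natural way to do this is to replace $w$ by a gradient field, modulo a piece against which $\nabla u$ vanishes.

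Second, for $w \in L^q(\Omega)^n$ I would apply the $L^q$-Helmholtz decomposition $w = \nabla \eta + w_\sigma$, where $\eta \in W^{1,q}(\Omega)$ is the Neumann potential determined by $(\nabla\eta,\nabla\phi) = (w,\nabla\phi)$ for every $\phi \in W^{1,p}(\Omega)$, normalized by $(\eta,1) = 0$, and $w_\sigma$ is the remainder satisfying $(w_\sigma,\nabla\phi) = 0$ for all $\phi \in W^{1,p}(\Omega)$. On convex polygonal or polyhedral domains the associated Neumann problem is $L^q$-regular for every $1<q<\infty$, yielding the key bound $\|\nabla \eta\|_{L^q(\Omega)} \leq C(q)\,\|w\|_{L^q(\Omega)}$. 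Taking $\phi = u$ in the solenoidality relation shows $(\nabla u,w_\sigma) = 0$, so
\begin{align*}
  (\nabla u, w) \;=\; (\nabla u,\nabla \eta)
  \;\leq\; \|\nabla\eta\|_{L^q(\Omega)}
  \sup_{\tilde\eta}\frac{(\nabla u,\nabla\tilde\eta)}{\|\nabla\tilde\eta\|_{L^q(\Omega)}}
  \;\leq\; C(q)\,S,
\end{align*}
where $S$ denotes the supremum on the right-hand side of the claim. Taking the supremum over admissible $w$ then yields the first inequality.

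The second inequality is immediate: the generalized Poincar\'e inequality applied to mean-zero $\eta$ furnishes $\|\eta\|_{W^{1,q}(\Omega)} \leq C\,\|\nabla\eta\|_{L^q(\Omega)}$, and combining with the first inequality replaces the denominator. The only real obstacle in this proof plan is the $L^q$-regularity of the Neumann Laplacian (equivalently, the $L^q$-boundedness of the Helmholtz projection), which on general bounded Lipschitz domains is available only in a neighbourhood of $q=2$; here it is the convexity of $\Omega$, already assumed for the problem setup, that upgrades this to the whole range $1<q<\infty$ and makes $C(p)$ depend continuously on $p$.
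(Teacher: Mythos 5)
Your argument is correct and is essentially the proof the paper intends: the paper simply cites Lemma 1.1 of Barrett--Garcke--N\"urnberg together with the $L^p$-stability of the Neumann problem on convex domains from Geng--Shen, and your duality-plus-Helmholtz-decomposition argument is precisely what those references supply, including the observation that convexity is what gives the full range $1<q<\infty$. The only cosmetic point is that the supremum should be read over $\eta\in W^{1,q}(\Omega)$ with $(\eta,1)=0$ (as you in fact do), since $\nabla\eta$ must make sense.
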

\begin{proof}
The proof follows  as in
\cite[Lem. 1.1]{2005_BarrettGarckeNuernberg_surfaceDiffusion_in_stressed_solid}
and uses $L^p$-stability for $u$ shown in 
\cite[Thm. 1.2]{2010_Geng_Shen__Neumann_problem_Helmholtz}.
\end{proof}

\begin{lemma}
\label{lem:FD:NeumannLaplace_Stability}
For $v \in W^{1,p}(\Omega)$ let $Q_hv \in \mathcal V_m^1$  be defined by
\begin{align}
  (\nabla Q_hv,\nabla w) &= (\nabla v, \nabla w) \quad \forall w \in \mathcal V_m^1,\\
  \int_\Omega Q_hv \dx &= \int_\Omega v \dx.
  \label{eq:FD:def_Qh}
\end{align}
Let $1<p<\infty$.
Then it holds
\begin{align}
  \|Q_hv\|_{W^{1,p}(\Omega)} \leq C(p)\|v\|_{W^{1,p}(\Omega)}.
  \label{eq:FD:NeumLapl_Stab}
\end{align}
\end{lemma}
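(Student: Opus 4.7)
The plan is to reduce the $W^{1,p}$-stability to controlling the gradient only. Since the definition imposes $(Q_hv,1)=(v,1)$, the generalized Poincar\'e inequality on zero-mean functions yields
\begin{equation*}
  \|Q_hv - |\Omega|^{-1}(v,1)\|_{L^p(\Omega)} \leq C\|\nabla Q_hv\|_{L^p(\Omega)},
\end{equation*}
and $|(v,1)|\leq C\|v\|_{L^p(\Omega)}$, so it suffices to establish $\|\nabla Q_hv\|_{L^p(\Omega)} \leq C(p)\|v\|_{W^{1,p}(\Omega)}$.

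To this end I would apply Lemma~\ref{lem:FD:neumannLaplace_supremum} to $Q_hv$. For an arbitrary $\eta\in W^{1,q}(\Omega)$ with $(\eta,1)=0$, pick a quasi-interpolant $I_h\eta\in\mathcal V_m^1$ (e.g.\ the Cl\'ement or Scott--Zhang operator, corrected by a constant to preserve zero mean) which is $W^{1,q}$-stable. Since $I_h\eta$ is admissible as a test function in \eqref{eq:FD:def_Qh}, Galerkin orthogonality gives the decomposition
\begin{equation*}
  (\nabla Q_hv,\nabla \eta) = (\nabla v,\nabla I_h\eta) + (\nabla Q_hv,\nabla(\eta-I_h\eta)).
\end{equation*}
The first summand is immediately controlled by H\"older's inequality and $W^{1,q}$-stability of $I_h$:
\begin{equation*}
  |(\nabla v,\nabla I_h\eta)| \leq \|\nabla v\|_{L^p(\Omega)}\|\nabla I_h\eta\|_{L^q(\Omega)} \leq C\|\nabla v\|_{L^p(\Omega)}\|\eta\|_{W^{1,q}(\Omega)}.
\end{equation*}

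The main obstacle is the second summand, which must be bounded uniformly in the mesh without circular reappearance of $\|\nabla Q_hv\|_{L^p(\Omega)}$. Following the strategy already indicated by the reference to \cite{2005_BarrettGarckeNuernberg_surfaceDiffusion_in_stressed_solid}, I would employ a Rannacher--Scott type duality argument tailored to the Neumann problem: introduce the auxiliary Neumann problem $-\Delta \psi = g$ associated with a suitable $L^q$-datum, use the continuous $W^{1,q}$-regularity theory from \cite{2010_Geng_Shen__Neumann_problem_Helmholtz} to bound $\|\psi\|_{W^{1,q}(\Omega)}$ in terms of the data, and combine this with local best-approximation estimates for $\mathcal V_m^1$ to absorb the defect $\nabla(\eta-I_h\eta)$ with an $h$-independent constant. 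The delicate point is precisely that the Helmholtz/Neumann regularity on $\Omega$ must be sharp enough in $L^q$ to match the interpolation error, and that the constants be uniform across the shape-regular but adaptively refined meshes $\mathcal T_m$; this is where the Geng--Shen theory is essential. Taking the supremum over admissible $\eta$ and combining with the first step then yields the claimed bound~\eqref{eq:FD:NeumLapl_Stab}.
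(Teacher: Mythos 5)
Your reduction to the gradient bound is fine: since $(Q_hv,1)=(v,1)$, the generalized Poincar\'e inequality together with $|(v,1)|\le C\|v\|_{L^p(\Omega)}$ correctly reduces \eqref{eq:FD:NeumLapl_Stab} to $\|\nabla Q_hv\|_{L^p(\Omega)}\le C(p)\|v\|_{W^{1,p}(\Omega)}$, and invoking Lemma~\ref{lem:FD:neumannLaplace_supremum} for $Q_hv$ plus Galerkin orthogonality is a natural start. The gap is in the only step that actually carries the difficulty: the term $(\nabla Q_hv,\nabla(\eta-I_h\eta))$. Interpolation makes $\eta-I_h\eta$ small in $L^q$, but \emph{not} its gradient, so H\"older gives only $C\|\nabla Q_hv\|_{L^p}\|\eta\|_{W^{1,q}}$ with $C=O(1)$ --- circular and non-absorbable. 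Elementwise integration by parts does not help either: $\Delta Q_hv=0$ on each triangle, and the surviving edge-jump terms, estimated by inverse and trace inequalities, again return $C\|\nabla Q_hv\|_{L^p(\omega_E)}\|\nabla\eta\|_{L^q(\omega_E)}$ with an $O(1)$ constant. The same wall appears in the duality variant you sketch: testing with $G=|\nabla Q_hv|^{p-2}\nabla Q_hv$ and the Geng--Shen Neumann regularity, one is left with $(\nabla(Q_hv-v),\nabla(\psi-I_h\psi))$, whose natural bound again contains $\|\nabla Q_hv\|_{L^p}^{p}$ with a constant that cannot be made less than one. So the mechanism you describe ("local best-approximation estimates to absorb the defect") does not close; no combination of quasi-interpolation stability, Galerkin orthogonality and $L^q$ elliptic regularity alone yields the result.

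What is actually needed --- and what the paper delegates to \cite[Ch.~8]{BrennerScott}, \cite{2005_BarrettGarckeNuernberg_surfaceDiffusion_in_stressed_solid} and \cite{NuernbergTucker__FEA_nanostructurePatterning} --- is the Rannacher--Scott weighted-norm machinery: regularized Green's (here Neumann) functions, dyadic or weighted $L^2$ estimates with the weight $(|x-z|^2+\kappa h^2)^{1/2}$, superapproximation, and a kick-back argument in which the circular term reappears with a constant that can be made strictly less than one by choosing $\kappa$ large; this first gives the $W^{1,\infty}$ case, and the range $1<p<\infty$ follows by interpolation with $p=2$ and duality. You name "a Rannacher--Scott type duality argument" but none of that content is present, and the concrete decomposition you set up does not lead into it. Since this is the entire substance of the lemma, the proof as written has a genuine gap.
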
 
\begin{proof}
The proof follows the lines of \cite[Ch. 8]{BrennerScott}.
However, from the fact that the boundary data is of Neumann type new difficulties arise and we refer
to \cite{2005_BarrettGarckeNuernberg_surfaceDiffusion_in_stressed_solid} and
\cite{NuernbergTucker__FEA_nanostructurePatterning} how to deal with these issues.
\end{proof}

\begin{lemma}
\label{lem:FD:NeuLap_sup_disc}
Let $u_h\in \mathcal V_m^1 \subset W^{1,q}(\Omega)$.
Then it holds
\begin{align*}
  \|\nabla u_h\|_{L^p(\Omega)} 
  \leq C(p)\sup_{
  \stackrel{\eta_h \in V^1_m, \eta_h \neq 0}{(\eta,1) = 0}
  } 
  \frac{(\nabla u_h,\nabla \eta_h)}{\|\eta_h\|_{W^{1,q}(\Omega)}},
\end{align*}
where $\frac{1}{p}+\frac{1}{q}=1$.
\end{lemma}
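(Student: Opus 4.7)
The plan is to discretize the supremum characterization from Lemma~\ref{lem:FD:neumannLaplace_supremum} by using the projection $Q_h$ from Lemma~\ref{lem:FD:NeumannLaplace_Stability} as a ``bridge'' between continuous and discrete test functions.

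First, since $u_h \in \mathcal V_m^1 \subset W^{1,p}(\Omega)$ (all discrete functions are piecewise polynomial, hence bounded in any $W^{1,p}$), Lemma~\ref{lem:FD:neumannLaplace_supremum} applies and yields
\begin{align*}
  \|\nabla u_h\|_{L^p(\Omega)}
  \leq C(p) \sup_{\eta\in W^{1,q}(\Omega),\,\eta\neq 0,\,(\eta,1)=0}
  \frac{(\nabla u_h,\nabla \eta)}{\|\eta\|_{W^{1,q}(\Omega)}}.
\end{align*}
The task is thus to replace the continuous test function $\eta$ by a discrete one $\eta_h \in \mathcal V_m^1$ with the mean-zero constraint preserved.

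Next, for a given admissible $\eta$, I set $\eta_h := Q_h \eta \in \mathcal V_m^1$. By the definition of $Q_h$ via \eqref{eq:FD:def_Qh}, the mean-value condition $\int_\Omega Q_h\eta\dx = \int_\Omega \eta\dx = 0$ is automatically preserved, so $\eta_h$ is an admissible discrete test function. Moreover, since $u_h \in \mathcal V_m^1$ is itself a valid test function in the defining identity for $Q_h$, the Galerkin orthogonality
\begin{align*}
  (\nabla u_h,\nabla \eta) = (\nabla u_h, \nabla Q_h\eta) = (\nabla u_h,\nabla \eta_h)
\end{align*}
holds. Combining this with the $W^{1,q}$-stability from Lemma~\ref{lem:FD:NeumannLaplace_Stability}, namely $\|\eta_h\|_{W^{1,q}(\Omega)} = \|Q_h\eta\|_{W^{1,q}(\Omega)} \leq C(q)\|\eta\|_{W^{1,q}(\Omega)}$, I obtain
\begin{align*}
  \frac{(\nabla u_h,\nabla \eta)}{\|\eta\|_{W^{1,q}(\Omega)}}
  \leq C(q)\,\frac{(\nabla u_h,\nabla \eta_h)}{\|\eta_h\|_{W^{1,q}(\Omega)}}
  \leq C(q) \sup_{\stackrel{\eta_h\in \mathcal V_m^1,\,\eta_h\neq 0}{(\eta_h,1)=0}}
  \frac{(\nabla u_h,\nabla \eta_h)}{\|\eta_h\|_{W^{1,q}(\Omega)}},
\end{align*}
where if $\eta_h = Q_h\eta = 0$ the numerator vanishes as well and the quotient contributes nothing. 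Taking the supremum over admissible continuous $\eta$ and absorbing $C(q)$ into a new $p$-dependent constant yields the claim.

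The only subtle point is the $W^{1,q}$-stability of $Q_h$, but this is precisely the content of Lemma~\ref{lem:FD:NeumannLaplace_Stability}, which was invoked as already established. Thus the remaining argument is essentially a Galerkin orthogonality plus stability reduction from the continuous version of Lemma~\ref{lem:FD:neumannLaplace_supremum}; no regularity beyond what the piecewise linear space already offers is needed.
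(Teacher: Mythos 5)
Your proof is correct and follows exactly the paper's argument: the paper's own (one-line) proof says the lemma ``directly follows by combining Lemma~\ref{lem:FD:neumannLaplace_supremum}, the definition of $Q_h$ in \eqref{eq:FD:def_Qh} and the stability estimate \eqref{eq:FD:NeumLapl_Stab},'' which is precisely your Galerkin-orthogonality-plus-stability reduction. You have merely written out the details (including the harmless $Q_h\eta=0$ edge case) that the paper leaves implicit.
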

\begin{proof}
Directly follows by combining Lemma~\ref{lem:FD:neumannLaplace_supremum}, the definition of $Q_hv$
in \eqref{eq:FD:def_Qh} and the stability estimate \eqref{eq:FD:NeumLapl_Stab}, compare 
\cite[Thm. 2.3]{NuernbergTucker__FEA_nanostructurePatterning}.
\end{proof}

\begin{theorem}\label{thm:FD:exSol_oneStep}
For given $v_0 \in H^1(\Omega)^n \cap L^\infty(\Omega)^n$, $u\in U$ there exist
$v^1_h \in H_{\sigma,1}$, $\gamma(v^1_h) = \Pi^1(B_Bu_B^1)$,
$\varphi^1_h \in \mathcal V^1_1$ and $\mu^1_h\in \mathcal V^1_1$
solving \eqref{eq:FD:chns1_solenoidal_init}--\eqref{eq:FD:chns3_solenoidal_init}.
It further holds
\begin{align*}
     &\|\mu^1_h\|_{W^{1,3}(\Omega)} 
   + \|\varphi^1_h\|_{W^{1,4}(\Omega)}
   + \|v^1_h\|_{H^1(\Omega)}\\
   &\leq
    C_1(v^0)
  C_2\left(\|u_I\|_{H^1(\Omega)},
  \|B_Vu_V^1\|_{L^2(\Omega)^n},
  \|B_Bu_B^1\|_{H^{\frac{1}{2}}(\partial\Omega)^n}\right),
\end{align*}
and Newton's method can be used to find the unique solution to
\eqref{eq:FD:chns1_solenoidal_init}--\eqref{eq:FD:chns3_solenoidal_init}.
\end{theorem}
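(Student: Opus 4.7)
My plan is to mirror the proof of Theorem~\ref{thm:TD:exSol_initStep} but adapt each step to the fully discrete setting, exploiting the sequential coupling from \cite{KayStylesWelford}: first treat the block \eqref{eq:FD:chns2_solenoidal_init}--\eqref{eq:FD:chns3_solenoidal_init} for $(\varphi^1_h,\mu^1_h)$ with $v^0$ and $u_I$ as given data, then solve the linear block \eqref{eq:FD:chns1_solenoidal_init} for $v^1_h$. For the Cahn--Hilliard block the only nonlinearity is the monotone $W'_+$, so testing \eqref{eq:FD:chns2_solenoidal_init} with $\Psi=\mu^1_h$ and \eqref{eq:FD:chns3_solenoidal_init} with $\Phi=\tau^{-1}(\varphi^1_h-P^1\varphi^0_h)$ and adding yields a discrete energy identity that gives an $H^1$--bound on $(\varphi^1_h,\mu^1_h)$ in terms of the data. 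Existence then follows as in the discrete analogue of \cite{HintermuellerHinzeTber}: the resulting system can be viewed as the optimality conditions of a strictly convex minimization problem in the mean-preserving subspace, which also yields uniqueness.

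The key new step relative to the time-discrete result is the upgrade from $H^1$ to $W^{1,p}$ regularity, for which I would invoke Lemma~\ref{lem:FD:NeuLap_sup_disc}. Applied to \eqref{eq:FD:chns2_solenoidal_init}, the right-hand side $-\tau^{-1}(\varphi^1_h-P^1\varphi^0_h) - v^0\cdot\nabla\varphi^0_h$ lies in $L^{3/2}(\Omega)$, since $v^0\in L^\infty$ and $\nabla u_I\in L^2$, and pairing it against discrete test functions $\eta_h$ with $\|\eta_h\|_{W^{1,3/2}}\le 1$ yields the $W^{1,3}$--bound for $\mu^1_h$. Applying the same lemma to \eqref{eq:FD:chns3_solenoidal_init}, the right-hand side $\mu^1_h - (\sigma/\epsilon)(W'_+(\varphi^1_h)+W'_-(P^1\varphi^0_h))$ lies in $L^{4/3}(\Omega)$ by assumption~\ref{ass:psi_boundedPoly} combined with Sobolev embedding applied to the $H^1$--bound from the previous step, giving the $W^{1,4}$--bound on $\varphi^1_h$ with polynomial dependence on the data.

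Once $(\varphi^1_h,\mu^1_h)$ is available with this regularity, \eqref{eq:FD:chns1_solenoidal_init} becomes linear in $v^1_h$. After subtracting a discrete, $H^1$--stable extension of $\Pi^1(B_Bu_B^1)$ inside $\mathcal V^2_1$, the shifted equation on $H_{0,\sigma,1}$ has a bilinear form that is coercive from the mass term together with $2(\eta^1_h Dv,Dw)$ and Korn's inequality, while the convective trilinear form vanishes on the diagonal by antisymmetry of $a$; the regularity $\nabla\mu^1_h\in L^3$ established above is precisely what is needed to control $J^1_h = -\rho_\delta b\nabla\mu^1_h$ inside $a(\cdot,\cdot,\cdot)$. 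Existence and the $H^1$--stability estimate then follow from Lax--Milgram, and Newton's method for the Cahn--Hilliard block converges since $W'_+$ is monotone and $C^1$, exactly as in Theorem~\ref{thm:TD:exSol_initStep}. The main obstacle I expect is the regularity upgrade: carefully checking, via \ref{ass:psi_boundedPoly} and Sobolev embedding, that the right-hand sides belong to the negative Sobolev spaces required by Lemma~\ref{lem:FD:NeuLap_sup_disc} with norms that depend only polynomially on the data and on the $H^1$--bound from the energy estimate.
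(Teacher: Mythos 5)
Your proposal follows essentially the same route as the paper's proof: sequential solution of the Cahn--Hilliard block with existence, uniqueness, Newton convergence and the $H^1$ bound taken from \cite{HintermuellerHinzeTber} (monotone $W_+'$), the upgrade to $W^{1,3}$ for $\mu_h^1$ and $W^{1,4}$ for $\varphi_h^1$ via the discrete duality Lemma~\ref{lem:FD:NeuLap_sup_disc}, and then Lax--Milgram for the shifted Oseen-type velocity equation using antisymmetry of $a$, Korn's inequality and the just-established $L^3$ bound on $\nabla\mu_h^1$. One small correction on exponents: for $n=3$ one has $W^{1,4/3}(\Omega)\hookrightarrow L^{12/5}(\Omega)$ only, so an $L^{4/3}$ bound on the right-hand side of \eqref{eq:FD:chns3_solenoidal_init} would not close the duality argument; instead use, as the paper does, that \ref{ass:psi_boundedPoly} with $q\le 4$ and $H^1(\Omega)\hookrightarrow L^6(\Omega)$ give $W_\pm'(\varphi)\in L^2(\Omega)$ and pair against the embedding $W^{1,4/3}(\Omega)\hookrightarrow L^2(\Omega)$, which is valid in both space dimensions.
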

\begin{proof}
For \eqref{eq:FD:chns2_solenoidal_init}--\eqref{eq:FD:chns3_solenoidal_init} the
existence of a unique solution and the applicability of Newton's method  follows
from \cite{HintermuellerHinzeTber}. Also the stability in $H^1$ is proven there.

To obtain the estimates of higher regularity we use Lemma~\ref{lem:FD:NeuLap_sup_disc}.
It holds
\eqref{eq:FD:chns2_solenoidal_init}
\begin{equation}
\begin{aligned}
 C \| \mu_h^1&\|_{W^{1,3}(\Omega)} 
 \leq \|\nabla \mu_h^1\|_{L^{3}(\Omega)}  + \|\mu_h^1\|_{L^3(\Omega)}\\
   \leq& \|\mu_h^1\|_{L^3(\Omega)}+C\sup_{
   \stackrel{v_h \in  V^1_m, (v_h,1) = 0}
   {\|v_h\|_{W^{1,\frac{3}{2}}(\Omega)} = 1}}
  (\nabla \mu_h^1,\nabla  v_h) \\
  \leq& C\|\mu_h^1\|_{H^1(\Omega)} +C\sup
  \left(\left|\frac{1}{\tau}(\varphi^1_h-P^1\varphi_h^0,v_h)\right| + \left|(v^0\nabla  \varphi_h^0,v_h)\right|\right)\\
  \leq &C \|\mu_h^1\|_{H^1(\Omega)} +C\sup \left(\|\varphi_h^1 -
  P^1\varphi_h^0\|_{L^2(\Omega)} \|v_h\|_{L^{2}(\Omega)}
  +
  \|v^0\nabla\varphi_h^0\|_{L^2(\Omega)^n}\|v_h\|_{L^2(\Omega)}
  \right)\\
  \leq & C \|\mu_h^1\|_{H^1(\Omega)} +C\|\varphi_h^1 -  P^1\varphi_h^0\|_{L^2(\Omega)}
  + C\|v^0\nabla \varphi_h^0\|_{L^2(\Omega)^n}\\
  \leq&C \left( \|\mu_h^1\|_{H^1(\Omega)}+
  \|\varphi_h^1\|_{H^1(\Omega)} + \|\varphi_h^0\|_{H^1(\Omega)}
  +\|v^0\|_{L^\infty(\Omega)^n}\|\varphi_h^0\|_{H^1(\Omega)}\right)
\end{aligned}
\label{eq:FD:higherBound_muh}
\end{equation}
which, together with the already known bound for $\|\mu_h^1\|_{H^1(\Omega)}$ states the bound on
$\mu_h^1$ in $W^{1,3}(\Omega)$.
Note the continuous embedding $W^{1,\frac{3}{2}}(\Omega) \hookrightarrow L^2(\Omega)$
used for $v_h$. 

For $\varphi_h^1$ we argue similarly and estimate 
\begin{align*}
 C&\|\varphi_h^1\|_{W^{1,4}(\Omega)}\\
  \leq & \|\varphi_h^1\|_{L^4(\Omega)} 
  +C \sup_{\stackrel{v_h \in
  W^{1,\frac{4}{3}}(\Omega),(v_h,1) = 0}{\|v\|_{W^{1,\frac{4}{3}}(\Omega)} = 1}}
  \left(  (\nabla \varphi_h^1,\nabla  v_h)
  \right)\\
  \leq &C\|\varphi_h^1\|_{H^1(\Omega)}  + C\sup \left(\left|(\mu_h^1,v_h)\right| +
  \left|\frac{\sigma}{\epsilon}(W_+^\prime(\varphi_h^1)+W_-^\prime(P^1\varphi_h^0),v_h)\right|\right)\\
  \leq & C\|\varphi_h^1\|_{H^1(\Omega)} + C\|\mu_h^1\|_{L^2(\Omega)}
  + C \sup \left[ (1+|\varphi_h^1|^{q-1},|v_h|) 
  + (1+|P^1\varphi_h^0|^{q-1},|v_h|)
  \right]\\
  \leq & C\|\varphi_h^1\|_{H^1(\Omega)} + C\|\mu_h^1\|_{L^2(\Omega)}\\
  &+ C \left( \|1+|\varphi_h^1|^{q-1}\|_{L^2(\Omega)}
  + \|1+|P^1\varphi_h^0|^{q-1}\|_{L^2(\Omega)}
  \right)\sup \|v_h\|_{L^2(\Omega)}\\
  \leq& C\|\varphi_h^1\|_{H^1(\Omega)} + C\|\mu_h^1\|_{L^2(\Omega)}
  +C \left( 1 + \|\varphi_h^1\|_{H^1(\Omega)}  +   \|\varphi_h^0\|_{H^1(\Omega)}
  \right).
\end{align*}
We note the continuous embeddings $W^{1,\frac{4}{3}}(\Omega) \hookrightarrow L^2(\Omega)$ and 
$H^1(\Omega) \hookrightarrow L^6(\Omega)$.

The existence of a unique solution for \eqref{eq:FD:chns1_solenoidal_init} and
stability for $v^1_h$ then follows from standard arguments for the Oseen
equation, since we use an LBB-stable finite element pair.
\end{proof}

\begin{theorem}\label{thm:FD:exSol_twoStep}
For given $u\in U$, $\varphi^{m-2}\in \mathcal{V}^1_{m-2}$,
$\varphi^{m-1}\in \mathcal{V}^1_{m-1}$,
$\mu^{m-1}\in \mathcal{V}^1_{m-1}$,
$v^{m-1}\in  H_{\sigma,m-1}$,
and for all $m=2,\ldots,M$ there exist $v^m_h \in
H_{\sigma,m}$, $\gamma(v^m_h) = \Pi^m(B_Bu_B^m)$,
$\varphi^m_h \in \mathcal V^1_m$ and $\mu^m_h\in \mathcal V^1_m$ solving
\eqref{eq:FD:chns1_solenoidal}--\eqref{eq:FD:chns3_solenoidal}.

It further holds
\begin{align*}
  &\|\mu^m_h\|_{W^{1,3}(\Omega)} 
  + \|\varphi^m_h\|_{W^{1,4}(\Omega)}
  +\|v^m_h\|_{H^1(\Omega)^n}
  \\
  &\leq 
   C\left(\|v_h^{m-1}\|_{H^1(\Omega)^n},
   \|\mu_h^{m-1}\|_{W^{1,3}(\Omega)},\|\varphi_h^{m-1}\|_{W^{1,4}(\Omega)},\right.\\
   &\left.
   \|B_Vu_V^m\|_{L^2(\Omega)^n},
   \|B_Bu_B^m\|_{H^{\frac{1}{2}}(\partial\Omega)^n}\right),
\end{align*}
and the constant depends polynomially on its arguments.
\end{theorem}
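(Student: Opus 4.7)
The plan is to proceed in three stages that parallel the analysis of Theorem~\ref{thm:FD:exSol_oneStep} and the bound from the continuous-in-space Theorem~\ref{thm:TD:exSol_twoStep}. First I would establish existence together with the $H^1$ energy bound for $(v^m_h,\varphi^m_h,\mu^m_h)$. After subtracting a discrete divergence-free extension of the Dirichlet datum $\Pi^m(B_Bu_B^m)$ to reduce to a problem with vanishing boundary values, I would test \eqref{eq:FD:chns1_solenoidal} with the resulting $w\in H_{0,\sigma,m}$, test \eqref{eq:FD:chns2_solenoidal} with $\Psi=\mu^m_h$, and test \eqref{eq:FD:chns3_solenoidal} with $\Phi=\tau^{-1}(\varphi^m_h-P^m\varphi_h^{m-1})$, and then add the three identities. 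The crucial cancellation is between $(\mu^m_h\nabla\varphi_h^{m-1},v^m_h)$ from the momentum equation and $(v^m_h\nabla\varphi_h^{m-1},\mu^m_h)$ from the phase field transport, while the trilinear form contributes nothing by antisymmetry. Together with the convex--concave splitting of $W$ and the inequalities of H\"older, Korn and Young (and Assumption~\ref{ass:rhoeta_linear}) this produces the $H^1(\Omega)^n$ bound on $v^m_h$ and the $H^1(\Omega)$ bounds on $\varphi^m_h$ and $\mu^m_h$, by essentially the same computation as in the proof of Theorem~\ref{thm:TD:exSol_twoStep}. Existence for the finite-dimensional coupled nonlinear system then follows from a Brouwer fixed-point argument on the LBB-stable ansatz space, with uniqueness provided by the monotonicity of $W'_+$ and a Gronwall-type estimate applied to the difference of two solutions.

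Second, I would upgrade the regularity of $\mu^m_h$ to $W^{1,3}(\Omega)$ via Lemma~\ref{lem:FD:NeuLap_sup_disc} applied to \eqref{eq:FD:chns2_solenoidal}. For a normalized test function $v_h\in\mathcal{V}^1_m$ with $(v_h,1)=0$ and $\|v_h\|_{W^{1,3/2}(\Omega)}=1$, the right-hand side splits into $\tau^{-1}(\varphi^m_h-P^m\varphi_h^{m-1},v_h)$ and $(v^m_h\nabla\varphi_h^{m-1},v_h)$. The first term is controlled by $\|\varphi^m_h\|_{H^1}+\|\varphi_h^{m-1}\|_{H^1}$ using the $L^2$-stability of $P^m$ and the embedding $W^{1,3/2}\hookrightarrow L^2$. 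For the convective term, the key estimate couples $v^m_h\in H^1\hookrightarrow L^6$ with $\nabla\varphi_h^{m-1}\in L^3$, which is available via the inductive assumption $\varphi_h^{m-1}\in W^{1,4}(\Omega)$ from the previous step (or from Theorem~\ref{thm:FD:exSol_oneStep} when $m=2$). Thereafter I would bootstrap to the $W^{1,4}(\Omega)$ bound on $\varphi^m_h$ exactly as in Theorem~\ref{thm:FD:exSol_oneStep}: apply Lemma~\ref{lem:FD:NeuLap_sup_disc} to \eqref{eq:FD:chns3_solenoidal} with test direction in $W^{1,4/3}(\Omega)\hookrightarrow L^2(\Omega)$, estimate $(\mu^m_h,v_h)$ via $\|\mu^m_h\|_{L^2}$, and control the nonlinear terms using Assumption~\ref{ass:psi_boundedPoly} with $q\leq 4$ together with $H^1\hookrightarrow L^6$. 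Finally, for given $(\varphi^m_h,\mu^m_h)$ and data from the previous time step, the existence and $H^1$ stability of $v^m_h$ reduces to the discrete Oseen problem on the LBB-stable pair $(\mathcal{V}^2_m,\mathcal{V}^1_m)$, which is standard via the antisymmetry of $a$ and Korn's inequality.

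The hard part will be making the coupling between the momentum balance and the Cahn--Hilliard transport equation rigorous at the existence level: in contrast to the initialization step \eqref{eq:FD:chns2_solenoidal_init}, where the phase field is transported by the known velocity $v^0$, equation \eqref{eq:FD:chns2_solenoidal} transports $\varphi_h^{m-1}$ with the unknown $v^m_h$. Resolving this requires the energy identity to provide a priori bounds that are uniform along the fixed-point iteration, and, at the same time, careful tracking of how the constants depend on $\|\varphi_h^{m-1}\|_{W^{1,3}(\Omega)}$ and $\|v_h^{m-1}\|_{H^1(\Omega)^n}$, so that the polynomial dependence asserted in the statement is preserved through the three bootstrap steps.
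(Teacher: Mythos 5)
Your proposal is correct and follows essentially the same route as the paper: the paper delegates existence and the $H^1$ energy bound to the Brouwer-fixed-point/energy argument of the cited reference (with the shifted system for the boundary datum, as in Theorem~\ref{thm:TD:exSol_twoStep}), and then obtains the $W^{1,3}$ and $W^{1,4}$ bounds exactly as in Theorem~\ref{thm:FD:exSol_oneStep}, replacing the $L^\infty$ bound on $v^0$ by the $L^6$ bound on $v^m_h$ paired with $\nabla\varphi_h^{m-1}\in L^3(\Omega)$ — which is precisely the modification you identify for the convective term.
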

\begin{proof}
In \cite{GarckeHinzeKahle_CHNS_AGG_linearStableTimeDisc} the existence of unique
solutions to \eqref{eq:FD:chns1_solenoidal}--\eqref{eq:FD:chns3_solenoidal} together with bounds in
$H^1(\Omega)$ on the solution is shown for the case $B_Vu_V^m=0$, $B_Bu_B^m=0$, 
using \cite[Lem. II 1.4]{Temam_NavierStokes_old}.
The volume force $B_Vu_V^m$ is  given data that enters the proof in a
straightforward manner. 
The boundary data $B_Bu_B^m$ can be incorporated by
investigating a shifted system as in Theorem \ref{thm:TD:exSol_twoStep}.

The estimates of higher regularity follow as in Theorem~\ref{thm:FD:exSol_oneStep}.
There the bound for $\mu_h^1$ relies on $L^\infty(\Omega)$ regularity of
$v^0$, that is not available here. Instead in \eqref{eq:FD:higherBound_muh} we can use a
$L^6(\Omega)$ bound for $v_h^m$ that directly follows from the $H^1(\Omega)$ bound by Sobolov embedding, together with the $L^3(\Omega)$
bound for $\nabla \varphi_h^{m-1}$.
\end{proof}

\begin{theorem}\label{thm:FD:exSol}
Let $v^0 \in H^1(\Omega)^n \cap L^\infty(\Omega)$, $u\in U$ be given.
Then there exist sequences $(v^m)_{m=1}^M \in (H_{\sigma,m})_{m=1}^M$,
$(\varphi^m)_{m=1}^M,(\mu^m)_{m=1}^M \in \mathcal (\mathcal V^1_m)_{m=1}^M$,
such that $(v^m,\varphi^m,\mu^m)$
is the unique solution to
\eqref{eq:FD:chns1_solenoidal_init}--\eqref{eq:FD:chns3_solenoidal} for
$m=1,\ldots,M$.
Moreover there holds
\begin{align*}
  & \|(v_h^m)_{m=1}^M  \|_{H^1(\Omega)}
   +\| (\mu^m_h)_{m=1}^M \|_{W^{1,3}(\Omega)}
   + \| (\varphi^m_h)_{m=1}^M  \|_{W^{1,4}(\Omega)}
   \\
   &\leq C_1(v^0)
  C_2\left(
  \|u_I\|_{H^1(\Omega)},
  \|(B_Vu_V^m)_{m=1}^M\|_{L^2(\Omega)^n},
  \|(B_Bu_B^m)_{m=1}^M\|_{H^{\frac{1}{2}}(\partial\Omega)^n}
  \right).
\end{align*}
Here the constants $C_1,C_2$ depend polynomially on their arguments.
\end{theorem}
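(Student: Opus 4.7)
The plan is to argue by induction over the time index $m$, treating the initialization ($m=1$) and the two-step recursion ($m \ge 2$) separately and then summing the individual stability bounds.

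\textbf{Base case.} First I would invoke Theorem~\ref{thm:FD:exSol_oneStep} with the given data $v^0$ and $u$. This directly yields existence and uniqueness of $(v_h^1,\varphi_h^1,\mu_h^1) \in H_{\sigma,1} \times \mathcal V_1^1 \times \mathcal V_1^1$ satisfying the trace condition $\gamma(v_h^1)=\Pi^1(B_Bu_B^1)$ and the first-step bound
\begin{align*}
\|v_h^1\|_{H^1(\Omega)^n} + \|\varphi_h^1\|_{W^{1,4}(\Omega)} + \|\mu_h^1\|_{W^{1,3}(\Omega)}
\le C_1(v^0)\,C_2\!\left(\|u_I\|_{H^1(\Omega)},\|B_Vu_V^1\|_{L^2(\Omega)^n},\|B_Bu_B^1\|_{H^{1/2}(\partial\Omega)^n}\right),
\end{align*}
where $C_2$ depends polynomially on its arguments.

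\textbf{Inductive step.} Assume solutions have been constructed for all $j \le m-1$ with the stated polynomial bound in the data. Applying Theorem~\ref{thm:FD:exSol_twoStep} with the previous-step quantities $v_h^{m-1},\varphi_h^{m-1},\mu_h^{m-1},\varphi_h^{m-2}$ as given data, we obtain existence and uniqueness of $(v_h^m,\varphi_h^m,\mu_h^m)$ together with the one-step estimate of Theorem~\ref{thm:FD:exSol_twoStep}. Substituting the inductive bounds for $\|v_h^{m-1}\|_{H^1}$, $\|\mu_h^{m-1}\|_{W^{1,3}}$ and $\|\varphi_h^{m-1}\|_{W^{1,4}}$ into the right-hand side, and using that compositions and sums of polynomially growing functions remain polynomially growing, yields a new bound on $(v_h^m,\varphi_h^m,\mu_h^m)$ that depends polynomially on $\|u_I\|_{H^1(\Omega)}$ and on the controls $B_Vu_V^j, B_Bu_B^j$ for $j=1,\ldots,m$.

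\textbf{Summing over $m$ and wrap-up.} Finally I would square and sum the per-step bounds from $m=1$ to $M$ (as specified by the norm notation $\|(f^m)_{m=1}^M\|_V$ introduced earlier) to recover the claimed combined estimate on the left-hand side. Since $M$ is fixed and each bound is polynomial in the inputs, the sum of squares of polynomials is again polynomial in $\|u_I\|_{H^1(\Omega)}$, $\|(B_Vu_V^m)_{m=1}^M\|_{L^2(\Omega)^n}$, and $\|(B_Bu_B^m)_{m=1}^M\|_{H^{1/2}(\partial\Omega)^n}$, with an overall multiplicative constant $C_1(v^0)$ absorbing the initial velocity data. I expect the main technical nuisance to be the bookkeeping of how the polynomial degrees grow through the $M$-fold iteration, but since each step-to-step transition is controlled by a fixed polynomial (with degree depending only on the growth exponent $q$ in Assumption~\ref{ass:psi_boundedPoly}) and $M$ is fixed a priori, the final dependence remains polynomial and no further machinery is needed.
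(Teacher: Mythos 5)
Your proposal is correct and follows essentially the same route as the paper: the paper's proof simply states that existence at each time instance follows from Theorems~\ref{thm:FD:exSol_oneStep} and~\ref{thm:FD:exSol_twoStep} and that the stability estimate follows by iteratively applying the per-step estimates, which is exactly the induction you spell out. Your additional remarks on the polynomial bookkeeping (compositions of polynomials remain polynomial, $M$ fixed) make explicit what the paper leaves implicit.
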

\begin{proof}
The existence of the solution for each time instance follows directly from
Theorem \ref{thm:FD:exSol_oneStep} and Theorem \ref{thm:FD:exSol_twoStep}.
The stability estimate follows from iteratively applying the stability estimates
from  Theorem \ref{thm:FD:exSol_oneStep};
\end{proof}

\begin{remark}
  The bounds with respect to higher norms are required in Section~\ref{ssec:lim}
  for the limit process $h \to 0$.
\end{remark}

To derive first order necessary optimality conditions we argue as in the case of
the time discrete optimization problem and show that Newton's method can be used
for solving the primal equation
\eqref{eq:FD:chns1_solenoidal_init}--\eqref{eq:FD:chns3_solenoidal}  on each
time instance.

\begin{theorem}\label{thm:FD:newt}
Newton's method can be used for finding the unique solution to
\eqref{eq:FD:chns1_solenoidal_init}--\eqref{eq:FD:chns3_solenoidal}
on each time instance.
\end{theorem}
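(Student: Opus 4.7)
The plan is to treat the discrete system at each time instance as a nonlinear equation $F_m(x_m)=0$ with $x_m=(v_h^m,\varphi_h^m,\mu_h^m)$ and to verify that the only source of nonlinearity is the term $W_+'(\varphi_h^m)$, which by Assumption \ref{ass:psi_C2}--\ref{ass:psi_boundedPoly} and the convexity of $W_+$ is continuously differentiable with a monotone (positive semidefinite) derivative. All remaining terms, including the trilinear form $a(\cdot,\cdot,\cdot)$, the viscous term $(2\eta_h^{m-1}D\cdot,D\cdot)$, the transport $(v_h^m\nabla\varphi_h^{m-1},\cdot)$, the capillary coupling $(\mu_h^m\nabla\varphi_h^{m-1},w)$ and the boundary extension built through $\Pi^m$ are affine in the unknown $(v_h^m,\varphi_h^m,\mu_h^m)$, so $F_m$ is $C^1$ with a locally Lipschitz derivative, which is all that is needed to invoke the classical local convergence result for Newton's method.

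For the initialization step $m=1$, I would proceed exactly as in the proof of Theorem \ref{thm:TD:exSol_initStep}: the Cahn--Hilliard block \eqref{eq:FD:chns2_solenoidal_init}--\eqref{eq:FD:chns3_solenoidal_init} decouples from the momentum equation because it uses the given data $v^0$ and $\varphi_h^0=u_I$ in the transport term. Applicability of Newton's method to this discrete Cahn--Hilliard block is precisely the content of the semismooth/Newton analysis in \cite{HintermuellerHinzeTber}, since the discretization in $\mathcal V_1^1$ preserves the monotone structure of $W_+'$ and the bilinear form $\sigma\epsilon(\nabla\cdot,\nabla\cdot)+\tau^{-1}(b\nabla\cdot,\nabla\cdot)$ is coercive on the zero-mean subspace (using Poincar\'e for $\varphi_h^1-P^1\varphi_h^0$). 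Once $\mu_h^1,\varphi_h^1$ are obtained, \eqref{eq:FD:chns1_solenoidal_init} is a linear discrete Oseen problem for $v_h^1$ on the LBB-stable pair $(\mathcal V_1^2,\mathcal V_1^1)$, which has a unique solution by Lax--Milgram (using the antisymmetry of $a$ and Korn's inequality), so no further Newton iteration is required.

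For the two-step scheme $m>1$ the three unknowns are coupled through $(\mu_h^m\nabla\varphi_h^{m-1},w)$ in \eqref{eq:FD:chns1_solenoidal} and $(v_h^m\nabla\varphi_h^{m-1},\Psi)$ in \eqref{eq:FD:chns2_solenoidal}, so I would write $F_m$ as a single block operator and show that its Fr\'echet derivative at any iterate is a bijection on the finite-dimensional space $H_{0,\sigma,m}\times\mathcal V_m^1\times\mathcal V_m^1$. The linearization has a saddle-point/Oseen--Cahn--Hilliard structure: the $v_h^m$ block is coercive by Korn and the positivity of $\eta_h^{m-1}$, the $(\varphi_h^m,\mu_h^m)$ block is coercive by the same argument as for $m=1$ after using $W_+''\ge 0$, and the coupling is bounded thanks to $\varphi_h^{m-1}\in W^{1,4}(\Omega)$ (from Theorem \ref{thm:FD:exSol_twoStep}) together with $v_h^m,\mu_h^m$ in $H^1\hookrightarrow L^4$. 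Injectivity then follows by testing the homogeneous linearized system against $(v_h^m,\tau^{-1}(\varphi_h^m-\varphi_h^{m-1}),\mu_h^m)$, which reproduces the energy estimate of \cite[Thm.~3]{GarckeHinzeKahle_CHNS_AGG_linearStableTimeDisc} at the linear level and forces $(v_h^m,\varphi_h^m,\mu_h^m)=0$. Since we work in finite dimensions, injectivity is equivalent to surjectivity, so the Jacobian is invertible.

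The main obstacle I anticipate is bookkeeping the coupling terms in the linearized system: the $\varphi$-derivative of $(\mu_h^m\nabla\varphi_h^{m-1},w)$ and of the density/viscosity terms $\rho(\varphi_h^{m-1}),\eta(\varphi_h^{m-1})$ do \emph{not} act on the current unknowns (because these factors are evaluated at the previous time step), which is precisely what makes the Jacobian tractable; one only has to check that the remaining coupling, carried by $a(\cdot,\cdot,\cdot)$ and by $J_h^{m-1}$, is dominated by the diagonal blocks via the $W^{1,3}$- and $W^{1,4}$-bounds of Theorem \ref{thm:FD:exSol_twoStep} and a standard Young inequality. Once invertibility of the Jacobian is established, local quadratic convergence of Newton's iteration follows from the Newton--Kantorovich theorem, concluding the proof.
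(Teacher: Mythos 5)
Your proposal is correct and follows essentially the same route as the paper: for $m=1$ the claim is delegated to Theorem \ref{thm:FD:exSol_oneStep} (the decoupled Cahn--Hilliard block is handled via \cite{HintermuellerHinzeTber} and the momentum equation is a linear discrete Oseen problem), and for $m>1$ one notes that $W_+'$ is the only nonlinearity, writes down the Fr\'echet derivative, and verifies that the linearized system is uniquely solvable. The only (minor) divergence is in the last step --- the paper establishes solvability of the linearized system by the Brouwer fixed-point argument of \cite[Thm.~2]{GarckeHinzeKahle_CHNS_AGG_linearStableTimeDisc}, whereas you argue injectivity-plus-finite-dimensionality via the energy identity, which works equally well; just note that the correct test triple for the \emph{homogeneous linearized} system is $(\delta v,\delta\mu,\tau^{-1}\delta\varphi)$ rather than $(v_h^m,\tau^{-1}(\varphi_h^m-\varphi_h^{m-1}),\mu_h^m)$.
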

\begin{proof}
For $m=1$ this is argued in Theorem \ref{thm:FD:exSol_oneStep}.
For $m>1$ we abbreviate
equation \eqref{eq:FD:chns1_solenoidal}--\eqref{eq:FD:chns3_solenoidal} by
$F((v^m_h,\varphi^m_h,\mu^m_h),(w,\Phi,\Psi)) = 0$.
Then $F$ is Fr\'echet differentiable, since all terms are linear beside the
term $W^{\prime}_+$ which is differentiable by Assumption \ref{ass:psi_C2}.
The derivative in the direction
$(\delta v, \delta \varphi,\delta \mu) \in H_{0,\sigma,m} \times
\mathcal V^1_m \times \mathcal V^1_m$
is given by
\begin{align*}
  \langle G&(v^{m}_h,\varphi^{m}_h,\mu^{m}_h)
  (\delta v,\delta \varphi, \delta \mu),(w,\Phi,\Psi) \rangle:=\\
  &\frac{1}{\tau}\left(\frac{\rho^{m-1}+\rho^{m-2}}{2}\delta v,w\right) +
  a(\rho^{m-1}v^{m-1}+J^{m-1},\delta v, w)\\
  & + (\eta^{m-1}D\delta v, Dw) -
  (\delta \mu\nabla \varphi^{m-1},w) \\
  &+\frac{1}{\tau}(\delta\varphi,\Psi) + (b\nabla \delta \mu,\nabla \Psi)
  + (\delta v\nabla \varphi^{m-1},\Psi)\\
  &+\sigma\epsilon(\nabla \delta \varphi,\nabla \Phi)+
  \frac{\sigma}{\epsilon}(W^{\prime\prime}_+(\varphi^{m}_h)\delta
  \varphi,\Phi) -(\delta \mu,\Phi).
\end{align*}
The existence of a solution
$(\delta v, \delta \varphi,\delta \mu)$ can be shown following
\cite[Thm. 2]{GarckeHinzeKahle_CHNS_AGG_linearStableTimeDisc}, using Brouwer's
fixpoint theorem. The boundedness of $(\delta v, \delta \varphi,\delta \mu)$
follows from the same proof.
\end{proof}

We next introduce the fully discrete analog to problem \eqref{prob:TD:Ptau}.
\begin{equation}\label{prob:FD:Ph}\tag{$\mathcal{P}_h$}
  \begin{aligned}
    \min_{u\in U} J( (\varphi_h^m)_{m=1}^M,u)& =
    \frac{1}{2}\|\varphi_h^M-\varphi_d\|^2_{L^2(\Omega)}\\
    & +\frac{\alpha}{2}
    \left(
    \alpha_I \left(
    \int_\Omega \frac{\delta}{2} |\nabla u_I|^2 +
    \delta^{-1}W_u(u_I)\dx \right)
    \right.\\
    &\left.\vphantom{\int_\Omega}
    + \alpha_V \|u_V\|^2_{L^2(0,T;\mathbb{R}^{u_v})}
    + \alpha_B \|u_B\|^2_{L^2(0,T;\mathbb{R}^{u_b})}
    \right)\\
    \mbox{s.t. }
    \eqref{eq:FD:chns1_solenoidal_init} - \eqref{eq:FD:chns3_solenoidal}.
  \end{aligned}
\end{equation}
We stress, that we do not discretize the control for the initial value.
However for a practical implementation we need a discrete description for
$u_I$.
This will be discussed after deriving the optimality conditions, see Section
\ref{sec:num}.

\begin{theorem}[Existence of an optimal discrete control] \label{thm:FD:exMin}
There exists at least one optimal control to \ref{prob:FD:Ph}.
\end{theorem}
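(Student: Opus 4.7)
The plan is to mimic the argument used for Theorem \ref{thm:TD:exMin} and apply the direct method of the calculus of variations, with the simplification that, for fixed meshes $\mathcal{T}_m$, the discrete state $y_h = (v_h^m,\varphi_h^m,\mu_h^m)_{m=1}^M$ lives in a finite-dimensional space at each time instance.

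First, I would introduce the reduced functional $\hat J_h(u) := J(y_h(u),u)$, where $y_h(u)$ denotes the unique solution to \eqref{eq:FD:chns1_solenoidal_init}--\eqref{eq:FD:chns3_solenoidal} guaranteed by Theorem~\ref{thm:FD:exSol}. Since $\hat J_h \geq 0$, there is a minimizing sequence $(u_l) \subset U$ with $\hat J_l := \hat J_h(u_l) \to \hat J_h^\star := \inf_{u\in U} \hat J_h(u)$. The quadratic tracking plus the Ginzburg--Landau regularizer are coercive (radially unbounded) on $U_V\times U_B$, and $u_{I,l} \in \mathcal K$ is a priori bounded in $H^1(\Omega)\cap L^\infty(\Omega)$. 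Hence $(u_l)$ is bounded in $U$, so along a subsequence $u_l \rightharpoonup u_\star$ in $U$. Since $\mathcal K$ is convex and closed in $H^1(\Omega)\cap L^\infty(\Omega)$, it is weakly closed, giving $u_{I,\star}\in\mathcal K$; thus $u_\star \in U$.

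Next I would verify admissibility of the limit. By Theorem~\ref{thm:FD:exSol} the states $y_{h,l}$ are bounded, so along a further subsequence $(v_{h,l}^m,\varphi_{h,l}^m,\mu_{h,l}^m)$ converges weakly in $H^1 \times W^{1,4}\times W^{1,3}$ to some $(v_h^{m,\star},\varphi_h^{m,\star},\mu_h^{m,\star})$. Since for each fixed $m$ these components lie in the finite-dimensional spaces $H_{\sigma,m}$, $\mathcal V^1_m$, $\mathcal V^1_m$, weak convergence is in fact strong convergence there, and in particular strong in every $L^p$-norm. Using this, together with the known strong convergence $u_{I,l}\to u_{I,\star}$ in every $L^p(\Omega)$, $p<\infty$ (via the compact embedding $H^1(\Omega)\hookrightarrow\hookrightarrow L^p(\Omega)$), passing to the limit in the discrete equations \eqref{eq:FD:chns1_solenoidal_init}--\eqref{eq:FD:chns3_solenoidal} is routine: all bilinear and trilinear terms are continuous on these finite-dimensional spaces, the nonlinearities $W'_\pm$ and $\rho,\eta$ are continuous, and $\Pi^m B_Bu_B^m$ depends linearly and continuously on $u_B^m$. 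This yields $y_h^\star = y_h(u_\star)$, so $(u_\star,y_h^\star)$ is admissible, arguing much like \cite[Thm.~6]{GarckeHinzeKahle_CHNS_AGG_linearStableTimeDisc}.

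Finally, I would establish $\hat J_h(u_\star) \leq \hat J_h^\star$ via weak lower semicontinuity. The quadratic parts $\tfrac12\|\varphi_h^M-\varphi_d\|^2$, $\|u_V\|^2$, $\|u_B\|^2$ and the Dirichlet energy $\int_\Omega \tfrac{\epsilon}{2}|\nabla u_I|^2$ are weakly lower semicontinuous on $L^2$ respectively $H^1$. For the nonconvex term $\int_\Omega\epsilon^{-1}W_u(u_I)$, the compact embedding $H^1(\Omega)\hookrightarrow\hookrightarrow L^p(\Omega)$ gives $u_{I,l}\to u_{I,\star}$ strongly in $L^p$, and since $W_u\equiv W^\infty$ from Remark \ref{rm:freeEnergies} is continuous on $[-1,1]$ (all $u_{I,l}\in\mathcal K$ satisfy $|u_{I,l}|\leq 1$), dominated convergence yields convergence of this term. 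Combining these estimates gives $\hat J_h(u_\star)\leq \liminf_l \hat J_h(u_l) = \hat J_h^\star$, so $u_\star$ is optimal.

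I expect the main obstacle to be the careful bookkeeping for the nonlinear terms of the state system, in particular making sure that weak convergence of $u_l$ in $U$ (and the corresponding weak convergence of the finite-dimensional discrete states) is enough to pass to the limit in the trilinear form $a(\cdot,\cdot,\cdot)$, in the products $\mu_h^m\nabla\varphi_h^{m-1}$, and in $W'_+(\varphi_h^m)$. This works because at fixed $h$ everything reduces to convergence of finitely many nodal coefficients and continuity of the nonlinearities, but it has to be checked component by component to ensure the limit indeed satisfies the state equation for $u_\star$.
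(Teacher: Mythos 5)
Your proposal is correct and follows essentially the same route as the paper, whose proof of this theorem is simply the remark that the claim follows from standard arguments as in Theorem~\ref{thm:TD:exMin}. You flesh out exactly that argument (direct method, weak compactness of the minimizing sequence, passage to the limit in the state equation — simplified here by the finite dimensionality of the discrete state spaces at fixed $h$ — and weak lower semicontinuity plus a compactness argument for the $W_u$-term), so no further comparison is needed.
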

\begin{proof}
The claim follows from standard arguments, compare Theorem \ref{thm:TD:exMin}.
\end{proof}

\bigskip

We next state the fully discrete counterpart of the first order optimality
conditions from Section \ref{sec:TD}.

For this we  introduce adjoint
variables $ (p_{v,h}^m)_{m=1}^M \in (H_{0,\sigma,m})_{m=1}^M$,
$(p_{\varphi,h}^m)_{m=1}^M \in (\mathcal V^1_m)_{m=1}^M$, and
$(p_{\mu,h}^m)_{m=1}^M \in (\mathcal V^m)_{m=1}^M$.
For convenience in the following we often  write $v_h^m :=
v_{0,h}^m + \widetilde{B_Bu_B^m}$.

By the same Lagrangian calculus as in Section \ref{sec:TD} we obtain the
following fully discrete optimality system.

\medskip
\noindent\textbf{Derivative with respect to the velocity}\\
The derivative with respect to $v_{0,h}^m$ for $m=2,\ldots,M$ into a direction
$\tilde v \in \mathcal V^2_m$ is given by
\begin{equation}
  \begin{aligned}
    (D_{v_h^m}L(\ldots,&v_h^m,\ldots),\tilde v)=\\
    & -
    \frac{1}{\tau}\left(
    (\frac{\rho_h^{m-1}+\rho_h^{m-2}}{2}\tilde v,p_{v,h}^{m})
    -(\rho_h^{m-1}\tilde v,p_{v,h}^{m+1})
    \right)\\
    &-a(\rho_h^m\tilde v,v_h^{m+1},p_{v,h}^{m+1})
    -a(\rho_h^{m-1}v_h^{m-1}+J_h^{m-1},\tilde v,p_{v,h}^m)\\
    &-(2\eta_h^{m-1}D\tilde v,Dp_{v,h}^m)
    -(\tilde v\nabla \varphi_h^{m-1},p_{\varphi,h}^m)= 0.
  \end{aligned}
  \label{eq:FD:adj:Dv}
\end{equation}
For $m=1$ we get
\begin{equation}
  \begin{aligned}
    (D_{v_h^1}L(&\ldots,v_h^1,\ldots),\tilde v)=\\
    &-\frac{1}{2\tau}((\rho_h^1+\rho^0){\tilde
    v},p_{v,h}^1)
    + \frac{1}{\tau}(\rho^0\tilde v,p_{v,h}^2)
    -a(\rho_h^1\tilde  v,v_h^2,p_{v,h}^2)\\
    &
    -a(\rho_h^1 v_h^0+J_h^1,{\tilde v},p_{v,h}^1)
    -(2\eta_h^1D\tilde v,Dp_{v,h}^1)=  0.
  \end{aligned}
  \label{eq:FD:adj:Dv_init}
\end{equation}
Note that for notational convenience here we introduce
artificial variables $v_h^{M+1}$, $p_{v,h}^{M+1}$,
and set them to $v_h^{M+1} \equiv p_{v,h}^{M+1} \equiv 0$.

\medskip

\noindent \textbf{Derivative with respect to the chemical potential}\\
The derivative with respect to the chemical potential  for
$m=2,\dots,M$ in a direction $\tilde \mu \in \mathcal V^1_m$ is
\begin{equation}
  \begin{aligned}
    (D_{\mu_h^m}&L(\ldots,\mu_h^m,\ldots),\tilde \mu) =\\
    &-a(J_{\mu}\tilde \mu,v_h^{m+1},p_{v,h}^{m+1})
    + (\tilde \mu\nabla \varphi_h^{m-1},p_{v,h}^{m})
    -(b\nabla \tilde \mu,\nabla p_{\varphi,h}^m)
    +(\tilde \mu,p_{\mu,h}^m)=0.
  \end{aligned}
  \label{eq:FD:adj:Dmu}
\end{equation}
For $m=1$ the equations is
\begin{equation}
  \begin{aligned}
    (D_{\mu^1}&L(\ldots,\mu^1,\ldots),\tilde \mu) =\\
    & -a(J_\mu{\tilde \mu},v_h^2,p_{v,h}^2)
    -a(J^1_\mu{\tilde \mu},v_h^1,p_{v,h}^1) 
    + (\tilde \mu\nabla u_I,p_{v,h}^1)\\
     &-(b\nabla \tilde \mu,\nabla p_{\varphi,h}^1) +(\tilde \mu,p_{\mu,h}^1) = 0.
  \end{aligned}
  \label{eq:FD:adj:Dmu_init}
\end{equation}
Here for $m=1,\ldots,M$ we abbreviate $J^m_\mu{\tilde \mu} = -\rho_\delta b
\nabla {\tilde \mu}$ and for notational convenience we introduce
artificial variables $v_h^{M+1}$ and
$p_{v,h}^{M+1}$, and set them to $v_h^{M+1} \equiv p_{v,h}^{M+1} \equiv 0$.

\medskip

\noindent \textbf{Derivative with respect to the phase field}\\
The derivative with respect to  the phase field $\varphi_h^m$ in a direction
$\tilde \varphi \in \mathcal{V}^1_m$ is for  $m=2,\dots,M$
\begin{equation}   
\begin{aligned}
  (D_{\varphi_h^m}&L(\ldots,\varphi_h^m,\ldots),\tilde \varphi) =\\
  &\delta_{mM}(\varphi_h^m-\varphi_d,{\tilde\varphi})
  -\frac{1}{\tau}
  \left(\rho'
  \frac{v_h^{m+1}p_{v,h}^{m+1}+v_h^{m+2}p_{v,h}^{m+2}}{2},
  \tilde \varphi
  \right)
  +
  \frac{1}{\tau}\left(
  \rho'v_h^{m+1}p_{v,h}^{m+2},\tilde \varphi
  \right)
  \\
  &-a(\rho'\tilde \varphi v_h^{m},v_h^{m+1},p_{v,h}^{m+1})
  -(2\eta'\tilde\varphi Dv_h^{m+1},Dp_{v,h}^{m+1})\\
  & +(\mu_h^{m+1}\nabla\tilde\varphi,p_{v,h}^{m+1})
  +(\rho'\tilde\varphi g,p_{v,h}^{m+1})\\
  &-\frac{1}{\tau}
  \left(
  (\tilde\varphi,p_{\varphi,h}^m) - (P^{m+1}\tilde\varphi,p_{\varphi,h}^{m+1})
  \right)
  -(v_h^{m+1}\nabla \tilde \varphi,p_{\varphi,h}^{m+1})\\
  &-\sigma\epsilon(\nabla \tilde \varphi,\nabla p_{\mu,h}^m)
  -\frac{\sigma}{\epsilon}(W''_+(\varphi_h^m)\tilde \varphi,p_{\mu,h}^m)
  -\frac{\sigma}{\epsilon}(W''_-(P^{m+1}\varphi_h^m)P^{m+1}\tilde\varphi,p_{\mu,h}^{m+1})=0.
\end{aligned}
  \label{eq:FD:adj:Dphi}
\end{equation}
Here $\delta_{mM}$ denotes the Kronecker delta of $m$ and $M$.
For $m=1$ we get
\begin{equation} 
\begin{aligned}
  (D_{\varphi_h^1}&L(\ldots,\varphi_h^1,\ldots),\tilde \varphi) =\\
  &
  -\frac{1}{\tau}(\frac{\rho^\prime}{2}{\tilde \varphi},v_h^2p_{v,h}^2 )
  -a(\rho^\prime{\tilde \varphi}v_h^1,v_h^2,p_{v,h}^2)
  -a(\rho^\prime{\tilde \varphi}v^0,v_h^1,p_{v,h}^1)\\
  &-(2\eta^\prime{\tilde \varphi}Dv_h^2,Dp_{v,h}^2)
  -(2\eta^\prime{\tilde \varphi}Dv_h^1,Dp_{v,h}^1)
  -(\mu_h^2\nabla {\tilde \varphi} p_{v,h}^2)
  -(\rho^\prime{\tilde \varphi}g,p_{v,h}^2)\\
  & +\frac{1}{\tau}(P^2\tilde \varphi,p_{\varphi,h}^2)
  -(v_h^2\nabla {\tilde \varphi},p_{\varphi,h}^2)
  -\frac{\sigma}{\epsilon}
  (W_-^{\prime\prime}(P^{2}\varphi_h^1)P^2\tilde  \varphi,p_{\mu,h}^2)\\
  %
  &-\frac{1}{\tau}( \frac{\rho^\prime{\tilde \varphi}}{2}v_h^1,p_{v,h}^1)
  -\frac{1}{\tau}({\tilde \varphi},p_{\varphi,h}^1)
  -\sigma\epsilon(\nabla {\tilde \varphi},\nabla p_{\mu,h}^1)
  -\frac{\sigma}{\epsilon}(W_+^{\prime\prime}(\varphi_h^1){\tilde
  \varphi},p_{\mu,h}^1) = 0.
\end{aligned}
\label{eq:FD:adj:Dphi_init}
\end{equation}
Here for notational convenience we introduce
artificial variables $v_h^{M+1}$, $v_h^{M+2}$, $p_{v,h}^{M+1}$, and
$p_{v,h}^{M+2}$, and set them to zero.

\begin{remark}
We note that the projection operator $P^m$ enters 
\eqref{eq:FD:adj:Dphi}--\eqref{eq:FD:adj:Dphi_init} 
acting on the test function $\tilde \varphi$.
\end{remark}


\medskip

\noindent\textbf{Derivative with respect to the control}\\
Finally we calculate the derivative with respect to the control for the three
parts of the control space.

For a test direction $w \in U_V$ we have
\begin{align*}
  (D_{u_V}&L(u,\ldots),w) =
  \alpha\alpha_V \int_I(u_V,w)_{\mathbb{R}^{u_v}}\dt
  +\sum_{m=1}^M(B_V w^m,p_{v,h}^m)_{L^2(\Omega)} =0,
\end{align*}
and thus the optimality condition is
\begin{align}
  \alpha\tau \alpha_Vu_V^m + B_V^*p_{v,h}^m = 0
  \in \mathbb{R}^{u_v} \quad m=1,\ldots,M.
  \label{eq:FD:opt_UV}
\end{align}
Here $B_V^\star p_v^m$ is defined as
\begin{align*}
  B_V^\star p_v^m :=
  ((f_l,p_{v,h}^m)_{L^2(\Omega)^n})_{l=1}^{u_v}.
\end{align*}

\bigskip

Concerning the derivative with respect to $u_B$ we have for a test function
$w \in U_B$
\begin{equation}
\begin{aligned}
  (D_{u_B}&L(u,\ldots), w) =
  \alpha\alpha_B \int_I(u_B,w)_{\mathbb{R}^{u_b}}\dt
  -\tau^{-1}\left(\frac{\rho_h^1+\rho^0}{2} \widetilde{B_B
  w^1},p_{v,h}^1\right) \\
  &-a(\rho_h^1v^0 + J_h^1,\widetilde{B_B w^1},p_{v,h}^1)
  -2(\eta_h^1D\widetilde{B_B w^1},Dp_{v,h}^1)\\
  &-\sum_{m=2}^{M} \left[
  \tau^{-1}\left(\frac{\rho_h^{m-1}+\rho_h^{m-2}}{2}
  \widetilde{B_B w^m},p_{v,h}^m\right)
  -\left(\rho_h^{m-2}\widetilde{B_B w^{m-1}},p_{v,h}^{m}\right)
  \right.\\
  &\left.
  +a(\rho_h^{m-1}v_h^{m-1} + J_h^{m-1},\widetilde{B_B w^m},p_{v,h}^m)
  +a(\rho_h^{m-1}\widetilde{B_B w^{m-1}},v_h^{m},p_{v,h}^{m}) \right.\\
  &\left. \vphantom{\frac{\rho^m}{2}}
  +2(\eta_h^{m-1}D\widetilde{B_B w^m},Dp_{v,h}^m)
  +(\widetilde{B_B w^m}\nabla \varphi_h^{m-1},p_{\varphi,h}^m)
  \right]\\
 & =: \alpha\alpha_B \int_I(u_B,w)_{\mathbb{R}^{u_b}}\dt
  + F_h(w) = 0.
\end{aligned}
\label{eq:FD:opt_UB}
\end{equation} 
Here $F_h(w)$ abbreviates the action of the discrete normal derivative of $p_{v,h}$, see e.g.
\cite{HinzePinnauUlbrichUlbrich}.

\bigskip

The derivative with respect to the initial condition $u_I$ in any
direction $w-u_I \in U_I$ is
\begin{equation}
\begin{aligned}
  (D_{u_I}&L(u,\ldots),w-u_I)_{U_I^\star,U_I} =
  \frac{\alpha}{2}\alpha_I
  \left(
  \epsilon(\nabla u_I,\nabla (w-u_I))
  +  \epsilon^{-1}\int_\Omega W_u^\prime(u_I)(w-u_I) \dx
  \right)\\
  &-\frac{1}{2\tau}\left(\rho'(w-u_I) v_h^2,p_{v,h}^2\right)
  +\frac{1}{\tau}\left(\rho'(w-u_I) v_h^1,p_{v,h}^2\right)\\
  &-\frac{1}{2\tau}\left(\rho'(w-u_I) v_h^1,p_{v,h}^1\right)
  +\frac{1}{\tau}\left(\rho'(w-u_I) v^0,p_{v,h}^1\right)\\
  &
  +(\mu_h^1\nabla (w-u_I),p_{v,h}^1)
  +(\rho'(w-u_I) K,p_{v,h}^1)\\
  &+\frac{1}{\tau}((w-u_I),p_{\varphi,h}^1)
  -(v^0\nabla (w-u_I),p_{\varphi,h}^1)
  -\frac{\sigma}{\epsilon}(W^{\prime\prime}_-(u_I))(w-u_I),p_{\mu,h}^1) \geq 0,
\end{aligned}
\label{eq:FD:opt_UI}
\end{equation}
and this inequality holds for all $w\in U_I$.

\begin{remark}
We
use the finite
element space $\mathcal V^1_1$
for the representation of $u_I$.
\end{remark}


\subsection{The limit $h \to 0$}
\label{ssec:lim}
We next investigate the limit $h\to 0$ for problem \ref{prob:FD:Ph}.
Let $u^\star,\varphi^\star$ denote a solution to \ref{prob:TD:Ptau}
and $u_h,\varphi_h$ denote a solution to
\ref{prob:FD:Ph}.
Since $u_h,\varphi_h$ is a minimizer for $J$ in the discrete setting, we have
$J(u_h,\varphi_h)\leq J(P_hu^\star,P_h\varphi^\star) \leq CJ(u^\star,\varphi^\star) 
=Cj$, where $P_h$ denotes any $H^1$-stable projection onto the discrete spaces.
Thus
\begin{equation}
  \begin{aligned}
    \frac{1}{2}&  \|\varphi^M_h - \varphi_d\|^2
    + \frac{\alpha}{2}
    \left(
    \alpha_I
    \int_\Omega \frac{\epsilon}{2} |\nabla u_{I,h}|^2 +
    \epsilon^{-1}W_u(u_{I,h})\dx
    \right.\\
    &\left.\vphantom{\int_\Omega}
    + \alpha_V \|u_{B,h}\|^2_{L^2(0,T;\mathbb{R}^{u_v})}
    + \alpha_B \|u_{V,h}\|^2_{L^2(0,T;\mathbb{R}^{u_b})}
    \right) \leq Cj.
  \end{aligned}
  \label{eq:lim:J_bounded}
\end{equation} 
Note that the mean value of $u_{I,h}$ is fixed and thus by Poincar\'es
inequality we have $\|u_{I,h}\|_{H^1(\Omega)} \leq C(1+\|\nabla u_{I,h}\|)$.

Thus from \eqref{eq:lim:J_bounded} we obtain the following bounds uniform in
$h$:
\begin{align*}
  \|u_{I,h}\|_{H^1(\Omega)}
+\|u_{B,h}\|_{L^2(0,T;\mathbb{R}^{u_v})}
+   \|u_{V,h}\|_{L^2(0,T;\mathbb{R}^{u_b})}\leq C.
\end{align*}
Using Theorem \ref{thm:FD:exSol} we further get the bounds
\begin{align*}
  \|(v_h^m)_{m=1}^M\|_{H^1(\Omega)^n}
  +\|(\mu_h^m)_{m=1}^M\|_{W^{1,3}(\Omega)}
  +  \|(\varphi_h^m)_{m=1}^M\|_{W^{1,4}(\Omega)}
  \leq C.
\end{align*}
Using Lax-Milgram's theorem and the above bounds  
 we further obtain bounds
\begin{align*}
  \| (p_{v,h}^m)_{m=1}^M\|_{H^1(\Omega)^n}
  +  \| (p_{\varphi,h}^m)_{m=1}^M\|_{H^1(\Omega)}
  + \| (p_{\mu,h}^m)_{m=1}^M\|_{H^1(\Omega)}  \leq C
\end{align*}
for the adjoint variables.

Now there exist $u_I^\star \in H^1(\Omega)$,
$u_V^\star \in L^2(0,T;\mathbb{R}^{u_v})$,
$u_B^\star \in L^2(0,T;\mathbb{R}^{u_b})$
such that
\begin{align*}
  u_{I,h} \rightharpoonup u_I^\star, \quad
  u_{V,h} \rightharpoonup u_V^\star, \quad
  u_{B,h} \rightharpoonup u_B^\star.
\end{align*}
There further exist
$(v^{m,\star})_{m=1}^M \in (H^1(\Omega)^n)^M$,
$(\varphi^{m,\star})_{m=1}^M \in W^{1,4}(\Omega)^M$, and
$(\mu^{m,\star})_{m=1}^M \in W^{1,3}(\Omega)^M$
such that
\begin{align*}
  v_h^m       \rightharpoonup v^{m,\star},\quad
  \varphi_h^m \rightharpoonup \varphi^{m,\star},\quad
  \mu_{h}^m   \rightharpoonup \mu^{m,\star} \quad \forall m=1,\ldots,M.
\end{align*}
And there further exist
$(p_v^{m,\star})_{m=1}^M \in H^1(\Omega)^M$,
$(p_\varphi^{m,\star})_{m=1}^M \in H^1(\Omega)^M$, and
$(p_\mu^{m,\star})_{m=1}^M \in H^1(\Omega)^M$
such that
\begin{align*}
  p_{v,h}^m       \rightharpoonup p_v^{m,\star},\quad
  p_{\varphi,h}^m \rightharpoonup p_\varphi^{m,\star},\quad
  p_{\mu,h}^m   \rightharpoonup p_\mu^{m,\star} \quad \forall m=1,\ldots,M.
\end{align*}

Now let us proceed to the limit in the fully discrete optimality system.
To this end we will especially show the following strong convergence results
\begin{align*}
  \varphi_h^m &\to \varphi^m \quad \mbox{in } H^1(\Omega),\\
  \mu_h^m &\to \mu^m \quad \mbox{in } W^{1,3}(\Omega),\\
  v_h^m &\to v^m \quad \mbox{in } H_\sigma(\Omega),\\
  u_{I,h} & \to u_I \quad \mbox{in } H^1(\Omega), 
\end{align*} 
for $m=1,\ldots,M$.

\medskip
 
\noindent\textbf{The limit $h\to 0$ in the primal equation}\\
The convergence of \eqref{eq:FD:chns3_solenoidal} to
\eqref{eq:TD:chns3_solenoidal} and of \eqref{eq:FD:chns3_solenoidal_init} to
\eqref{eq:TD:chns3_solenoidal_init} follows directly from the proposed weak
convergences together with the strong convergence $\varphi^m_h \to \varphi^m$ in
$L^{\infty}$ obtained by compact Sobolev embedding.
 To obtain strong convergence in $H^1(\Omega)$ we argue as 
in the proof of Theorem~\ref{thm:FD:exSol_oneStep}. 

Let $B:H^1(\Omega)\times H^1(\Omega) \to \mathbb R$ denote the coercive bilinear form
$B(u,v) = \sigma\epsilon (\nabla u,\nabla v) + (u,v)$ and let $Q_h\varphi^1 \in \mathcal V^1_1$
denote the projection of $\varphi^1$ onto $\mathcal V_1^1$ with respect to $B$ fulfilling
$\|Q_h\varphi^1-\varphi^1\|_{H^1(\Omega)} \to 0$ for $h\to 0$, since $\varphi^1 \in H^2(\Omega)$.

Then it holds
\begin{align*}
 & \|\varphi^1_h - \varphi^1\|_{H^1(\Omega)} 
  \leq \|\varphi^1_h - Q_h\varphi^1\|_{H^1(\Omega)}+\|Q_h\varphi^1 - \varphi^1\|_{H^1(\Omega)},
  \end{align*}
  and
  \begin{align*}
 &C\|\varphi^1_h - Q_h\varphi^1\|^2_{H^1(\Omega)}\\
&\leq B(\varphi^1_h - Q_h\varphi^1,\varphi^1_h - Q_h\varphi^1)
= B(\varphi^1_h-\varphi^1,\varphi^1_h - Q_h\varphi^1)\\
\leq & | (\mu_h^1-\mu^1,\varphi_h^1-Q_h\varphi^1)|  
+ \|\varphi_h^1-\varphi^1\|_{L^2(\Omega)}\|\varphi_h^1-Q_h\varphi^1\|_{L^2(\Omega)}\\
 &+ \frac{\sigma}{\epsilon} |(W^\prime_+(\varphi_h^1)-W^\prime_+(\varphi^1),\varphi_h^1-Q_h\varphi^1)| 
 + \frac{\sigma}{\epsilon} |
 (W^\prime_-(P^1\varphi_h^0)-W^\prime_-(\varphi^0),\varphi_h^1-Q_h\varphi^1)|\\
 \leq & \|\mu_h^1-\mu^1\|_{L^2(\Omega)}\|\varphi_h^1-Q_h\varphi^1\|_{L^2(\Omega)} 
 +  \|\varphi_h^1-\varphi^1\|_{L^{2}(\Omega)} \|\varphi_h^1-Q_h\varphi^1\|_{L^{2}(\Omega)}\\
& +
\frac{\sigma}{\epsilon}\|W_+^\prime(\varphi_h^1)-W_+^\prime(\varphi^1)\|_{L^{5/3}(\Omega)}\|\varphi_h^1-Q_h\varphi^1\|_{L^{5/2}(\Omega)}\\
 &+
 \frac{\sigma}{\epsilon}\|W_-^\prime(P^1\varphi_h^0)-W_-^\prime(\varphi^0)\|_{L^{5/3}(\Omega)}\|\varphi_h^1-Q_h\varphi^1\|_{L^{5/2}(\Omega)}.
\end{align*}
Using Sobolev embedding $H^1(\Omega) \hookrightarrow L^{p}(\Omega)$, $p\leq 6$ and dividing by
$\|\varphi_h^1-Q_h\varphi^1\|_{H^1(\Omega)}$ the resulting differences tend to zero by 
compact Sobolev embedding, 
or by Lebesgue's generalized convergence theorem and Assumption~\ref{ass:psi_boundedPoly}.
The same arguments apply for the case $m>1$.

\medskip

The convergence of equation \eqref{eq:FD:chns2_solenoidal} to \eqref{eq:TD:chns2_solenoidal} and
\eqref{eq:FD:chns2_solenoidal_init} to \eqref{eq:TD:chns2_solenoidal_init} is
shown using the strong convergence $v_h^m \to v^m$ in $L^3(\Omega)$ together
with weak convergence $\nabla \varphi_h^{m-1} \rightharpoonup \nabla
\varphi^{m-1}$ in $L^2(\Omega)$ yielding weak convergence of the transport term
$v^m_h\nabla \varphi^{m-1}_h$ in $L^{6/5}$. For $m=1$ $v^0\nabla \varphi_h^0$  converges weakly in
$L^2(\Omega)$.
Further, strong convergence $\mu_h^m \to \mu^m$ in $H^1(\Omega)$ follows as above.

To show strong convergence in $W^{1,3}$ it is thus sufficient to show strong convergence for $\nabla
\mu_h^1 \to \nabla \mu^1$ in $L^3(\Omega)$. 
We define $Q_hv \in \mathcal V^1_1$ by 
\begin{align*}
(\nabla (Q_hv - v) , \nabla w_h)& = 0 \quad 
\forall w_h \in \mathcal V^1_1,\\  
(Q_hv,1)& = (v,1),
\end{align*}
satisfying $\|Q_hv\|_{W^{1,\frac{3}{2}}(\Omega)} \leq C\|v\|_{W^{1,\frac{3}{2}}(\Omega)}$,
Lemma~\ref{lem:FD:NeumannLaplace_Stability}.

We adapt the idea from Theorem~\ref{thm:FD:exSol_oneStep} and proceed
\begin{align*}
  &C\|\nabla \mu_h^1-\nabla\mu^1\|_{L^{3}(\Omega)}\\
  &\leq \sup_{\stackrel{v\in W^{1,\frac{3}{2}}(\Omega),(v,1)=0} {\|v\|_{W^{1,\frac{3}{2}}(\Omega)}=1}}
   (\nabla(\mu_h^1-\mu^1),\nabla v) = \sup\left[(\nabla (\mu_h^1-\mu^1),\nabla Q_hv) + (\nabla
   (\mu_h^1-\mu^1),\nabla (v-Q_hv) )\right] \\
  &\leq \sup\left[ (\nabla (\mu_h^1-\mu^1),\nabla Q_hv) + (\nabla\mu^1,\nabla (Q_hv-v)) \right]\\
  &\leq C\sup\left[ |\tau^{-1}(\varphi_h^1-\varphi^1,Q_hv)| 
  + |\tau^{-1}(P^1\varphi_h^0-\varphi^0,Q_hv)| 
  + |(v^0\nabla \varphi_h^0 -   v^0\nabla\varphi^0,Q_hv)\right.\\
  & \left. + |\tau^{-1}(\varphi^1-\varphi^0,Q_hv-v)|
  + |(v^0\nabla \varphi^0,Q_hv-v)| \right]\\
  & \leq C \left[
  \|\varphi_h^1-\varphi^1\|_{L^2(\Omega)}
  +  \|P^1\varphi^0_h-\varphi^0\|_{L^2(\Omega)}\right.\\
  &\left.+ \|\varphi^1-\varphi^0\|_{L^2(\Omega)}\sup \|Q_hv-v\|_{L^2(\Omega)}
  + \|v^0\nabla \varphi^0\|_{L^2(\Omega)}\sup \|Q_hv-v\|_{L^2(\Omega)}
  \right]\\
&  + C\sup |(v^0\nabla Q_hv,\varphi^0_h-\varphi^0)|\\
&\leq C \left[
  \|\varphi_h^1-\varphi^1\|_{L^2(\Omega)}
  +  \|P^1\varphi^0_h-\varphi^0\|_{L^2(\Omega)}\right.\\
  &\left.+\|\varphi^1-\varphi^0\|_{L^2(\Omega)}\sup \|Q_hv-v\|_{L^2(\Omega)}
  + \|v^0\nabla \varphi^0\|_{L^2(\Omega)}\sup \|Q_hv-v\|_{L^2(\Omega)}
  \right]\\
 & + C\|v^0\|_{L^\infty(\Omega)}\|\varphi^0_h-\varphi^0\|_{L^2(\Omega)}.
\end{align*}
Note that we used integration by parts to deal with the transport term.
From the H\"older and Sobolev inequalities it follows
\begin{align*}
  \|Q_hv-v\|_{L^2(\Omega)}^2 \leq \|Q_hv-v\|_{L^{\frac{3}{2}}(\Omega)}\|Q_hv-v\|_{L^{3}(\Omega)}.
\end{align*} 
The last term is bounded due to the fact, that $\|v\|_{W^{1,\frac{3}{2}}(\Omega)}\leq 1$ and 
$Q_h$ is stable in $W^{1,\frac{3}{2}}(\Omega)$. 
Since $\|Q_hv-v\|_{L^{\frac{3}{2}}(\Omega)} \leq Ch\|v\|_{W^{1,\frac{3}{2}}(\Omega)}$ we obtain
$\|Q_hv-v\|_{L^2(\Omega)} \to 0$ for $h\to 0$ and thus the strong convergence of $\nabla \mu_h$ in
$L^3(\Omega)$.
If $m>1$ we can use the strong convergence $\varphi_h^{m-1} \to \varphi^{m-1}$ in $H^1(\Omega)
\hookrightarrow L^6(\Omega)$ together with $\|v_h^m\|_{L^6(\Omega)} \leq C$ to treat the transport
term.

\medskip

Next we consider the convergence of
\eqref{eq:FD:chns1_solenoidal} to \eqref{eq:TD:chns1_solenoidal} and
\eqref{eq:FD:chns1_solenoidal_init} to \eqref{eq:TD:chns1_solenoidal_init}.
Here the convergence $(\eta_h^{m-1}Dv_h^m : Dw) \to (\eta^{m-1}Dv^m : Dw)$
follows from the strong convergence $\varphi_h^{m-1} \to \varphi^{m-1}$ in
$L^\infty(\Omega)$ (by compact embedding $W^{1,4}(\Omega) \hookrightarrow
L^\infty(\Omega)$) and the weak convergence $Dv_h^m \rightharpoonup Dv^m$ in
$L^2(\Omega)$.
The convergence of the trilinear form is obtained by using the just shown strong
convergence $\nabla \mu_h^m \to \nabla \mu^m$ in $L^3(\Omega)$ together with the
weak convergence of $v_h^m \rightharpoonup v^m$ in $L^6(\Omega)$.

Let us finally show strong convergence $v_h^m \to v^m$ in
$H^1(\Omega)^n$.
Let $B:H_{\sigma}\times H_{\sigma} \to \mathbb R$ denote the coercive bilinear form 
$B(u,v) = (\eta_h^1Du:Dv) + (u,v)$.
The coercivity of $B$ follows from Korn's
inequality.
Let $w_h\in H_{\sigma,1}$ denote a sequence, such that
$\|w_h-v^1\|_{H^1(\Omega)^n} \to 0$ for $h\to 0$ and 
$w_h|_{\partial\Omega} \equiv v_h^1|_{\partial\Omega}$. 
The weak continuity of $\Pi^1$ ensures that $\Pi^1(v_h^1) \to v^1|_{\partial\Omega}$ and thus such
sequence $w_h$ exists.
 Then we have $w_h \rightharpoonup v_h^1$ in $H^1(\Omega)$ for $h\to0$ and it
holds
\begin{align*}
\|v_h^1-v^1\|_{H^1(\Omega)^n} 
\leq \|v_h^1-w_h\|_{H^1(\Omega)^n} + \|w_h-v^1\|_{H^1(\Omega)^n}.
\end{align*} 
Now we proceed with
\begin{align*}
  C &\|v_h^1 -  w_h\|_{H^1(\Omega)^n}^2\\
  &\leq B(v_h^1-w_h,v_h^1-w_h)
  = B(v_h^1,v_h^1-w_h) - B(w_h,v_h^1-w_h)\\
  &\leq  (B_Vu_h^1,v_h^1-w_h) 
   + (\mu_h^1\nabla \varphi_h^0,v_h^1-w_h) 
   + (\rho_h^0 g,v_h^1-w_h)\\
   &-a(\rho^1_hv^0 + J_h^1,v_h^1,v_h^1-w_h)
    - \tau^{-1}\left( \frac{\rho_h^1 + \rho_h^0}{2}v_h^1 - \rho_h^0v^0,v_h^1-w_h\right)\\
    &+(v_h^1,v_h^1-w_h) - 2(\eta_h^1Dw_h:D(v_h^1-w_h)) - (w_h,v_h^1-w_h)\\
   &\leq \|u_{v,h}^1\|_{L^2(\Omega)} \|v_h^1-w_h\|_{L^2(\Omega)}
   + \|\mu_h^1\nabla \varphi_h^0\|_{L^{\frac{3}{2}}(\Omega)}\|v_h^1-w_h\|_{L^3(\Omega)}\\
    &+ \|\rho_h^0g\|_{L^2(\Omega)^n}\|v_h^1-w_h\|_{L^2(\Omega)}  \\
    & +|a(\rho^1_hv^0 + J_h^1,v_h^1,v_h^1-w_h)|\\
    & + \tau^{-1}\|\frac{1}{2}(\rho_h^1 + \rho_h^0)v_h^1 -
    \rho_h^0v^0\|_{L^2(\Omega)}\|v_h^1-w_h\|_{L^2(\Omega)}\\
    & + \|v_h^1-w_h\|_{L^2(\Omega)}^2 + |2(\eta_h^1Dw_h:D(v_h^1-w_h))|.  
\end{align*}
Now $|(\eta_h^1Dv^1:D(v_h^1-w_h))| \to 0$ for $h\to 0$ since $\eta_h^1Dw_h \to \eta^1Dv^1$ in
$L^2(\Omega)$ and $D(v_h^1-w_h) \rightharpoonup 0$ in $L^2(\Omega)$, and thus beside the
trilinear form all terms directly vanish for $h\to 0$.

For the trilinear form we use the antisymmetry $a(\cdot,v_h^1-w_h,v_h^1-w_h) = 0$ and proceed
\begin{align*}
   \left|a(\rho^1_hv^0 + J_h^1,v_h^1,v_h^1-w_h)\right|
  & = \left|a(\rho^1_hv^0 + J_h^1,w_h,v_h^1-w_h)\right|\\
  &= \left|\frac{1}{2}(t_h\nabla w_h,v_h^1-w_h)- \frac{1}{2}(t_h \nabla (v_h^1-w_h),w_h)\right|\\
  & \leq \frac{1}{2} \|t_h\|_{L^3(\Omega)}\|w_h\|_{H^1(\Omega)}\|v_h^1-w_h\|_{L^2(\Omega)}\\
 & +\frac{1}{2}
 \left|(t_h \nabla (v_h^1-w_h),w_h)\right|,
\end{align*}
We note the strong convergence $w_h \to v^1$ in $L^6(\Omega)$ and $t_h\to t$ in $L^3(\Omega)$.
Thus the last term tends to zero for $h\to 0$.

For $m>1$ we use $\rho_h^{m-1} \to \rho^{m-1}$ in $L^\infty(\Omega)$ to again obtain the strong
convergence $\rho^{m-1}_hv_h^{m-1} + J_h^{m-1} \to \rho^{m-1}v^{m-1} + J^{m-1}$ in $L^3(\Omega)$.

\medskip

\noindent\textbf{The limit $h\to0$ in the dual equation}\\
The convergence of \eqref{eq:FD:adj:Dv} and \eqref{eq:FD:adj:Dv_init} to
\eqref{eq:TD:adj:Dv} and \eqref{eq:TD:adj:Dv_init}, i.e. the adjoint
Navier--Stokes equation, is shown as in the primal equation using the strong
convergence of $\varphi_h^m$ in $L^\infty(\Omega)$ and $\mu_h^m$ in
$W^{1,3}(\Omega)$ to show convergence of the trilinear form and of the 
diffusion term.

The convergence of \eqref{eq:FD:adj:Dmu} and \eqref{eq:FD:adj:Dmu_init} to
\eqref{eq:TD:adj:Dmu} and \eqref{eq:TD:adj:Dmu_init} uses strong convergence of
$p_{v,h}^{m+1}$ in $L^4(\Omega)$ and of $v_h^{m+1}$ in $L^6(\Omega)$, where
the additional regularity for $v_h$ is required.

The convergence of \eqref{eq:FD:adj:Dphi} and \eqref{eq:FD:adj:Dphi_init}
to \eqref{eq:TD:adj:Dphi} and \eqref{eq:TD:adj:Dphi_init} also follows directly
using the above shown strong convergence of the primal variables.
Especially for the term 
$(\eta^\prime \tilde \varphi Dv_h^{m+1}:Dp_{v,h}^{m+1})$
we need the strong convergence $v_h^{m+1} \to v^{m+1}$ in $H^{1}(\Omega)$.

\medskip

\noindent\textbf{The limit $h\to0$ in the derivative w.r.t. the control}\\
The convergence of \eqref{eq:FD:opt_UV} to \eqref{eq:TD:opt_UV} is shown using the
strong convergence $p_{v,h}^m$ in $L^2(\Omega)$.

The convergence of \eqref{eq:FD:opt_UB} to \eqref{eq:TD:opt_UB} is shown
using the various strong convergence results.

Finally we show the convergence of \eqref{eq:FD:opt_UI} to \eqref{eq:TD:opt_UI}.
Since $J(u_h,\varphi_h) \to J(u^\star,\varphi^\star)$, 
we observe convergence $\|\nabla u_{I,h}\|_{L^2(\Omega)} \to \|\nabla u_I^\star\|_{L^2(\Omega)}$.
Together with Poincar\'e's inequality and the weak convergence 
$u_{I,h} \rightharpoonup u_I^\star$ in $H^1(\Omega)$ we observe strong convergence $u_{I,h} \to
u_I^\star$ in $H^1(\Omega)$.
The convergence \eqref{eq:FD:opt_UI} to \eqref{eq:TD:opt_UI} now readily follows.


\section{Numerical examples}\label{sec:num}

In this section we show numerical results for the optimal control problem
\ref{prob:FD:Ph}. The implementation is done in \verb!C++! using the
finite element toolbox FEniCS \cite{fenics_book} together with the PETSc linear algebra
backend  \cite{petsc_webpage} and the linear solver MUMPS \cite{mumps}. For the
adaptation of the spatial meshes the toolbox ALBERTA \cite{alberta_book} is used.
The minimization problem is solved by steepest descent method. If the
initial phase field is not used as control, we use the GNU scientific library \cite{gsl_webpage},
if the initial value is used as
control we use a self written implementation using the $H^1$ regularity of
the control $u_1$.

Let us next  define some data, that is used throughout all examples.
We use
$\rho(\varphi) = \frac{\rho_2-\rho_1}{2}\varphi +
\frac{\rho_1+\rho_2}{2}$ and
$\eta(\varphi) = \frac{\eta_2-\eta_1}{2}\varphi +
\frac{\eta_1+\eta_2}{2}$, where $\rho_1, \rho_2$ and $\eta_1,\eta_2$ depend on
the actual example.
For the free energy we always use \eqref{eq:freeEnergy},
with $s =1e4$, and the mobility is set to $b\equiv \epsilon/500$.

\subsection{The adaptive concept}
For the construction of the spatially adapted meshes we use the error indicators
that are constructed in \cite{GarckeHinzeKahle_CHNS_AGG_linearStableTimeDisc}
for the primal equation and use the series of meshes that we construct for the
primal equation also for the dual equation.
This means that we use classical residual based error estimation to obtain
suitable error indicators.
We note that following \cite{CarstensenVerfuerth_EdgeResidualDominate} the
cell-wise residuals for the Cahn--Hilliard equation can be subsumed to
the edge-wise error indicators. We further note that from our numerical tests we
obtain that the cell-wise residuals of the momentum equation is much smaller
than the edge-wise indicators, while it turns out to be very expensive to
evaluate. Thus we neglect this term.
The final error indicator is the cell-wise sum of the jumps of the normal
derivatives of the phase field variable, the chemical potential and the velocity
field over the cell boundary.
The final adaptation scheme for the primal equation is a
D\"orfler marking scheme based on this indicator, see e.g.
\cite{Doerfler,GarckeHinzeKahle_CHNS_AGG_linearStableTimeDisc}.


For the D\"orfler marking we set the largest cell volume to $V_{max} = 0.0003$,
while the smallest cell volume is set to
$V_{min} =\frac{1}{2}\left(\frac{\pi\epsilon}{8}\right)^2$ which results in 8
triangles across the interface of thickness $\mathcal{O}(\pi\epsilon)$.

Concerning the temporal resolution, we stress that we did not discretize the
control $u_V$ and $u_B$ with respect to time, i.e. we use the variational
discretization approach from \cite{HinzeVariDisk}.
Thus we can adapt the time step size during the optimization to fulfill a CFL-condition without changing the
actual control space.
Thus we start with a given large time step size $\tau$ and reduce this steps size whenever the
CFL-condition $\max_{T} \frac{|y^m|_T|\tau}{\mbox{diam}(T)} \leq 1$ is violated
for any $m=1,\ldots,M$ by halven $\tau$.

\subsection{A rising bubble}
In this example investigate the pure boundary control $\alpha_V \equiv
\alpha_I \equiv 0$.
Here we use $u_I = \varphi_0$ as given data that we represent on a
adapted mesh using the proposed adaptive concept.

We investigate the example of a rising bubble, compare
\cite{kahle_dissertation} and use the parameters from the benchmark paper
\cite{Hysing_Turek_quantitative_benchmark_computations_of_two_dimensional_bubble_dynamics},
i.e.
$\rho_1 = 1000$, $\rho_2 = 100$, $\eta_1 = 10$, $\eta_2 = 1$.
The surface tension is $24.5$ which due to our choice of free energy corresponds
to $\sigma = 15.5972$. The gravitational constant is $g=(0,-0.981)^t$ and the
computational domain is $\Omega = (0,1)\times (0,1.5)$.
The time interval is $I=[0,1.0]$ 
and we start with a step size $\tau = 5e-3$, that is refined to $\tau = 2.5e-3$ throughout the
optimization.

The initial phase field is given by
\begin{align}
  \label{eq:num:c2s:phi0}
  \varphi_0(x) =
  \begin{cases}
\sin((\|x-M_1\|-r)/\epsilon) & \mbox{ if } |\|x-M_1\|-r|/\epsilon \leq \pi/2,\\
\mbox{sign}(\|x-M_1\|-r) & \mbox{ else,}
\end{cases}
 \end{align}
  with $M_1 =(0.5,0.75)^t$ and $r=0.25$. The desired phase field is given by the same
expression but with $M_1 = (0.5,0.5)^t$. Thus we aim to move a bubble to the
bottom without changing its shape.

Concerning the ansatz functions for the operator $B_B$ we introduce
the vector field
\begin{align*}
  (f[m,\xi,c](x))_i =
  \begin{cases}
\cos \left( (\pi/2) \|\xi^{-1}(x-m) \| \right)^2 &
\mbox{ if } c \equiv i \mbox{ and } \|\xi^{-1}(x-m) \|\leq 1,\\
0 & \mbox{else.}
\end{cases}
\end{align*}  
This describes an approximation to the Gaussian bell with local support. The
center is given by $m$ and the diagonal matrix $\xi$ describes the width of
the bell in unit directions. We identify a scalar value for
$\xi$ with $\xi I$, where $I$ denotes the identity matrix.
The parameter $c$ is the number of  the component in which the vector field $f$
is non-zero.
On the left and right boundary of $\Omega$ we provide 10 equidistantly
distributed ansatz functions $f[m_i,\xi_i,c_i](x)$. 
Here $\xi_i = 1.5/10$ and $\xi_i = 1.0/10$ if $m_i$ is located on bottom or top.
We always choose $c_i$ such that the ansatz function is tangential to $\Omega$.

We set $\alpha=1e-10$ and  $\epsilon = 0.04$
and stop the optimization as soon as $\|\nabla J(u)\|_U$ is decreased by a
factor of 0.1.

In Figure \ref{fig:num:nRB:phi0_phid_Bu} we present the initial phase field
$\varphi_0$, the desired phse field $\varphi_d$ and the control areas together
with the zero-level lines of $\varphi_0$ and $\varphi_d$.

\begin{figure}
  \centering
  \fbox{
  \includegraphics[width=0.25\textwidth]{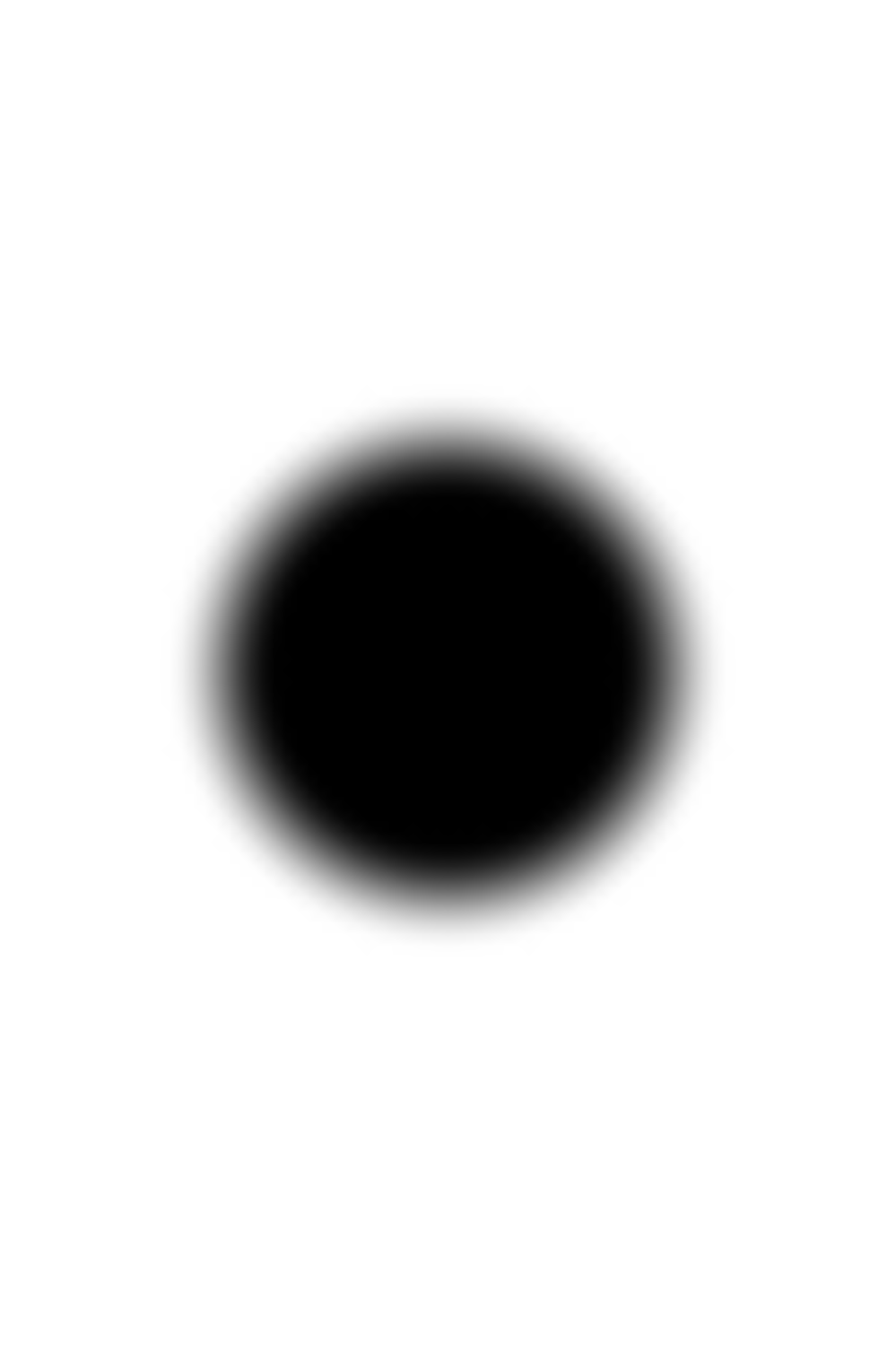}
  }
  \hfill
  \fbox{
  \includegraphics[width=0.25\textwidth]{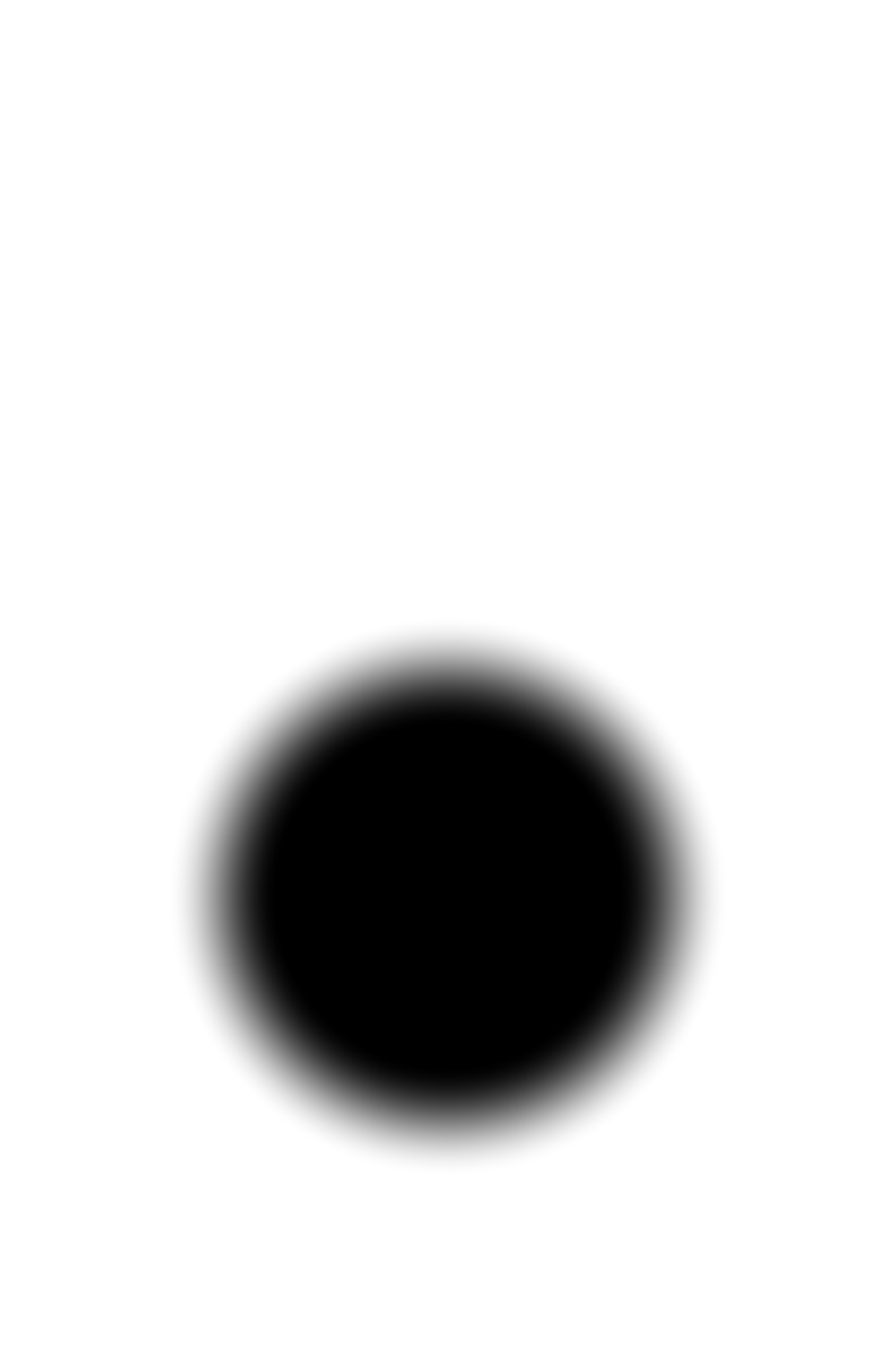}
  }
  \hfill
  \fbox{
  \includegraphics[width=0.25\textwidth]{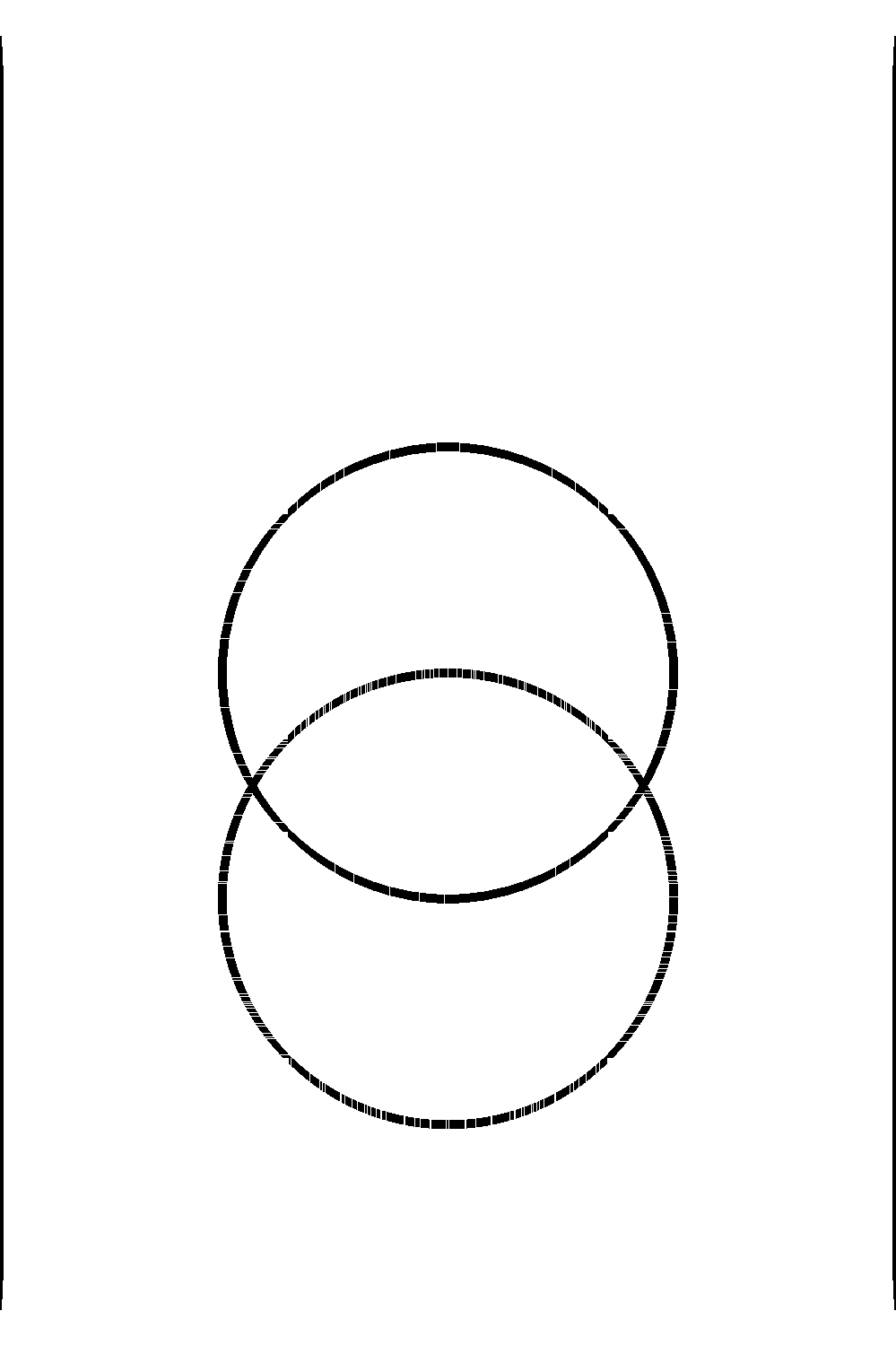}
  }
  
  \caption{The initial phase field $\varphi_0$ (left), the desired phase field
  $\varphi_d$ (middle) and the control areas together with the zero-level lines
  of $\varphi_0$ and $\varphi_d$ (right) for the rising bubble example.
  Note that each of the control areas contains 10 controls of the type
  $f[m,\xi,c](x)$ that point tangential to $\partial \Omega$ with overlapping
  support.
  }
  \label{fig:num:nRB:phi0_phid_Bu}
\end{figure}

The steepest descent method is able to reduce $\|\nabla J\|_{U}$
from $6e-2$ to $4.6e-2$ in 67 iterations and stagnates due to no further
decrease in $\|\nabla J\|_U$. Mean while the functional 
 $J$ is reduced from $0.509$ to $0.033$.
In Figure \ref{fig:num:nRB:phiOpt_lr} we show the evolution of $\varphi$ for the
optimal control together with the magnitude of the velocity field.

In Figure \ref{fig:num:nRB:nu_over_time} we show the evolution of the
control action over time. We observe a rapid decay of the control strength
at the end of the time horizon, while the first peak corresponds to a strong
control at the side walls in the region above the bubble, that is rather
inactive after this initial stage.

\begin{figure}
  \centering
  \fbox{
  \includegraphics[width=0.2\textwidth]{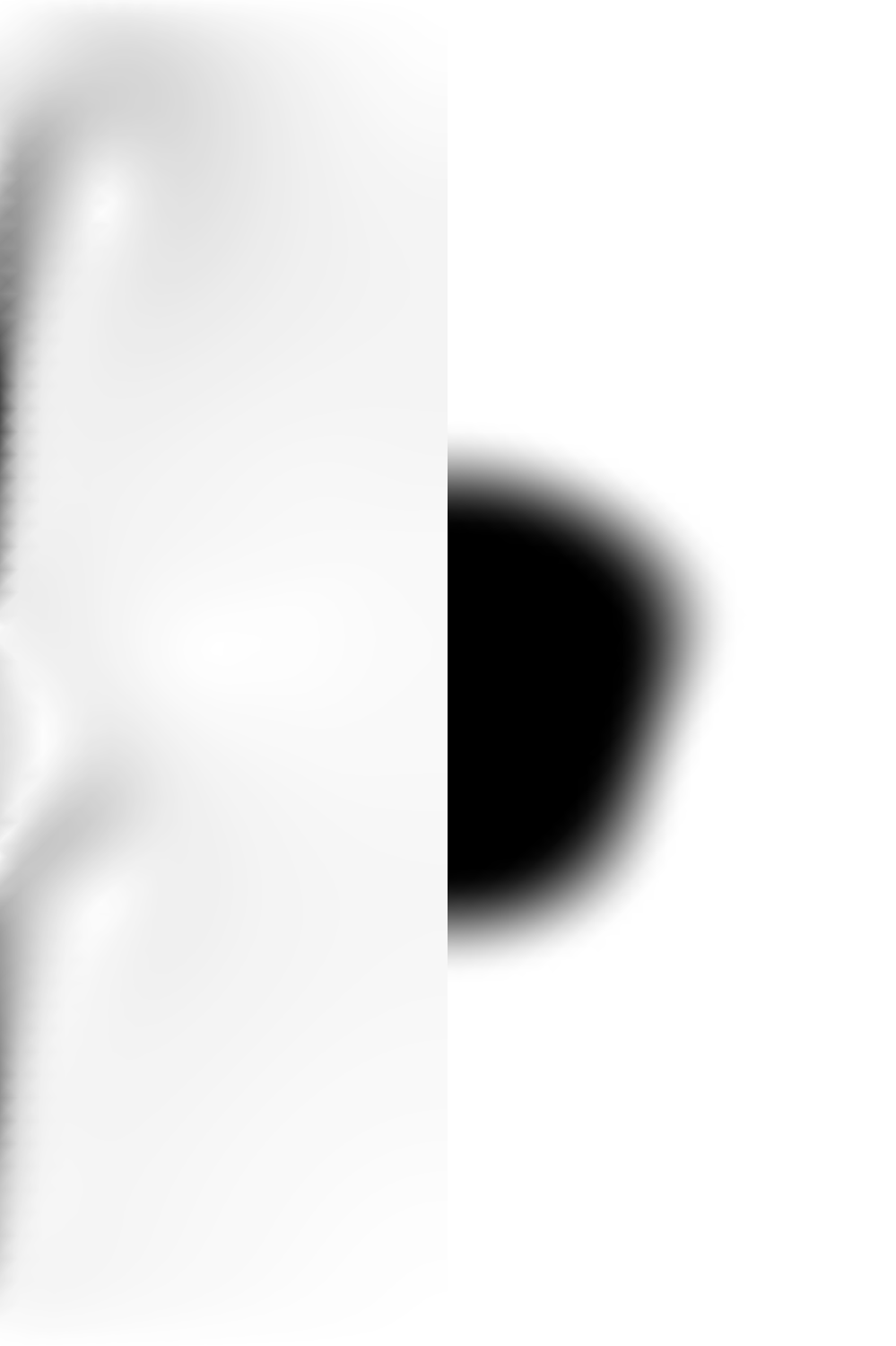}
  }
  \hfill
  \fbox{
  \includegraphics[width=0.2\textwidth]{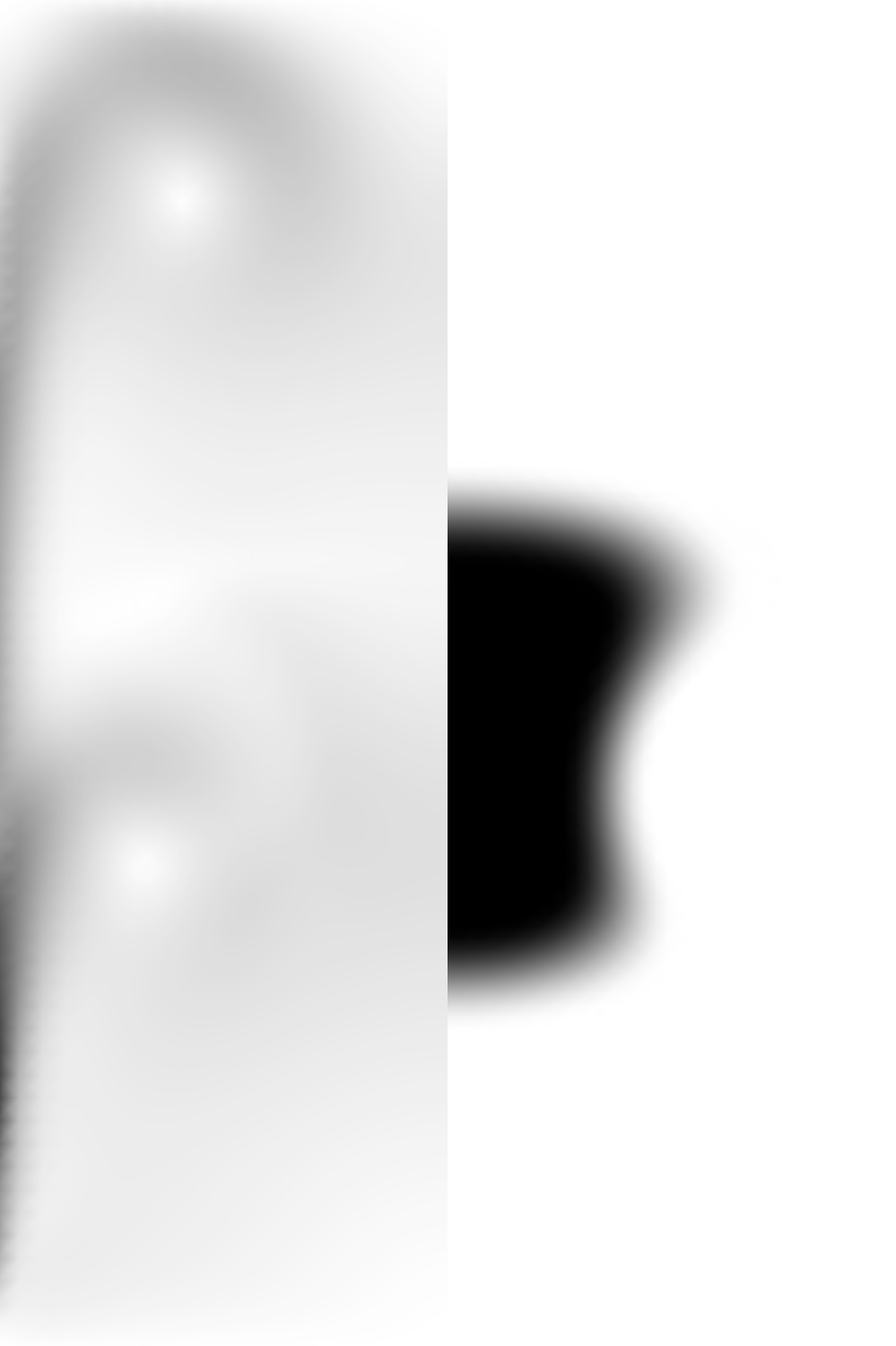}
  }
  \hfill
  \fbox{
  \includegraphics[width=0.2\textwidth]{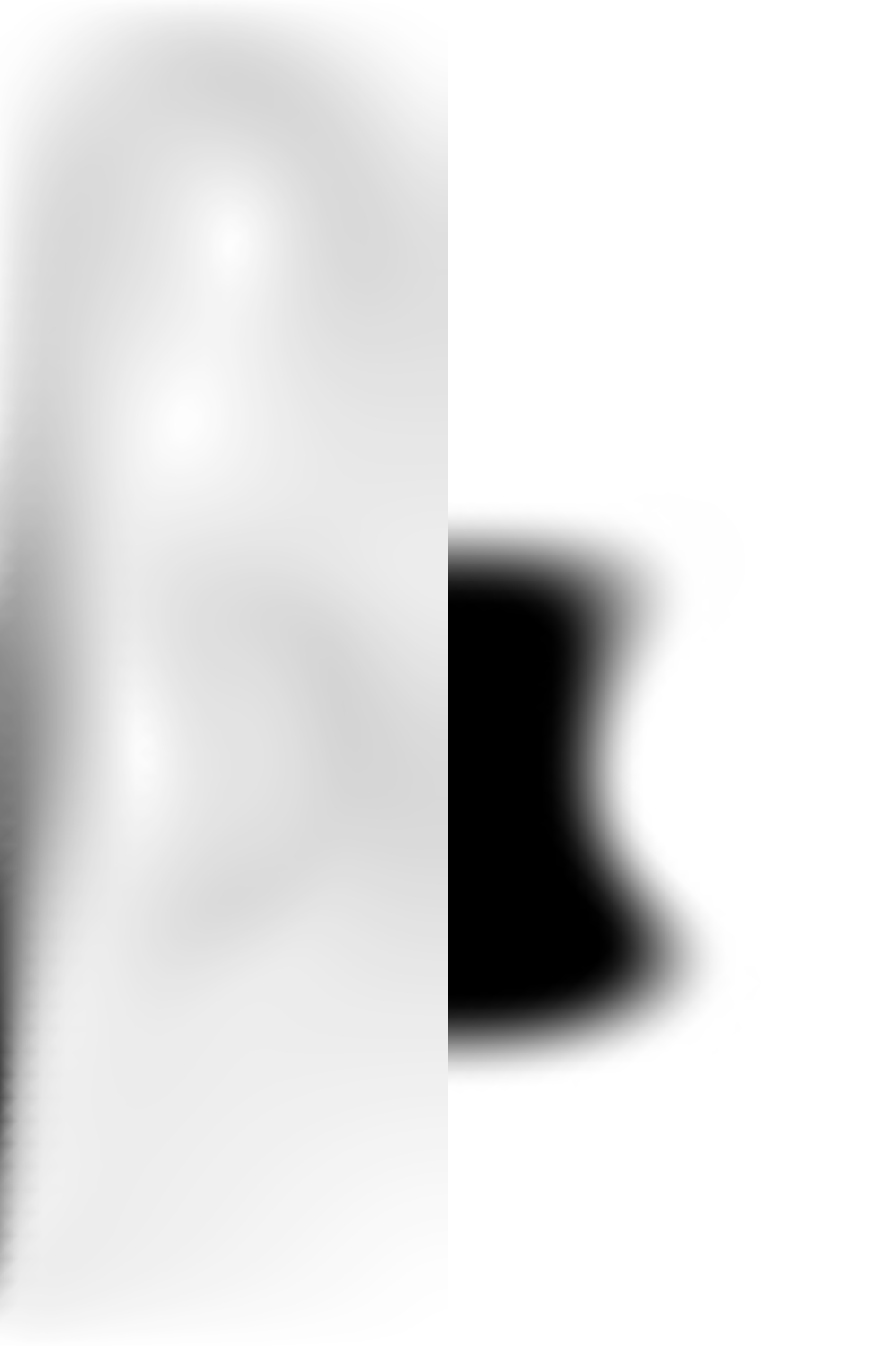}
  }
  \hfill
  \fbox{
  \includegraphics[width=0.2\textwidth]{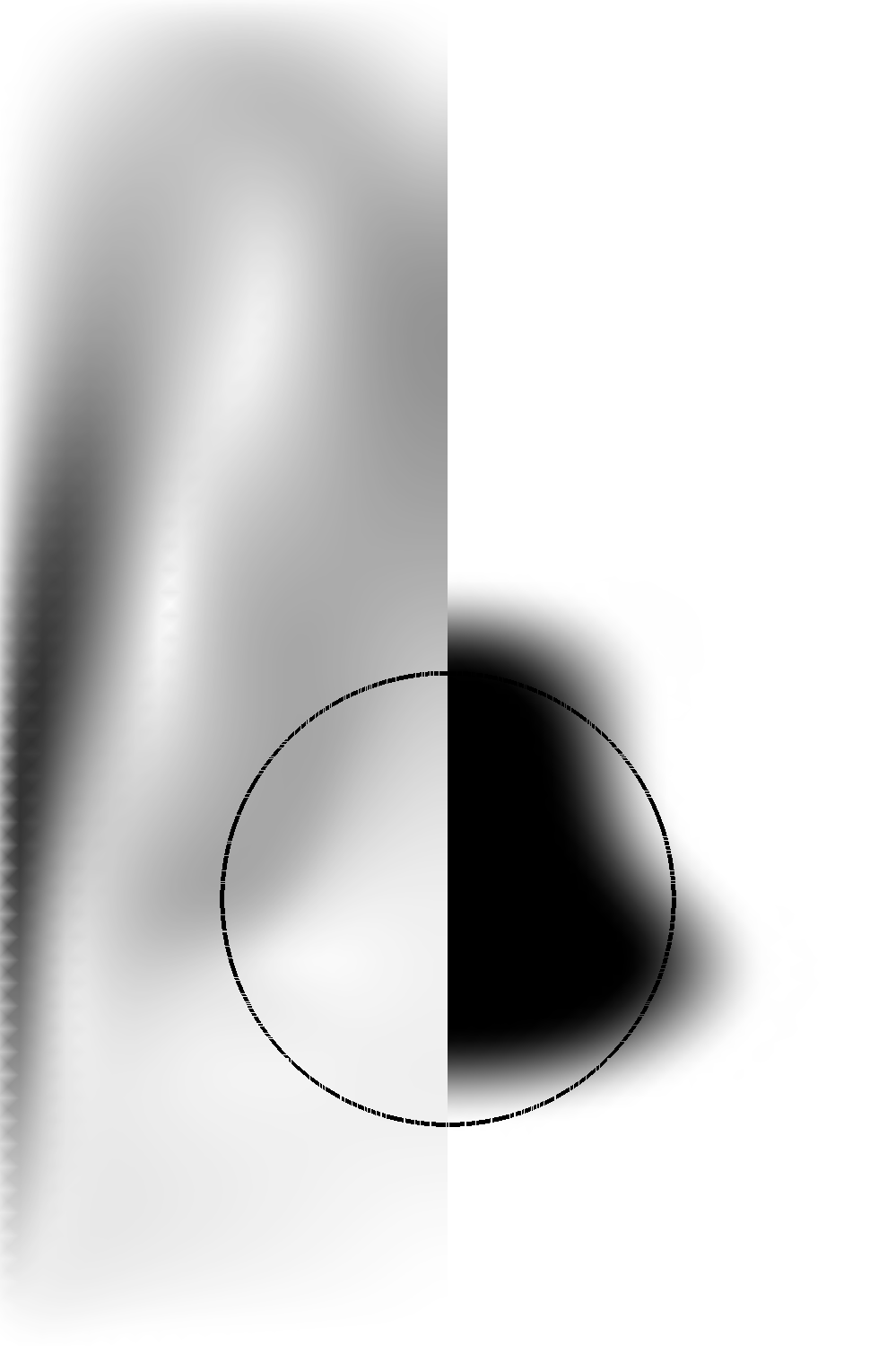}
  }
 
  \caption{
  The evolution of the optimally controlled phase field and velocity field at
  times $t=0.25,0.5,0.75,1.0$ (left to right) when control is only applied
  to the side walls and not at the bottom and the top part of the boundary.
  The pictures show the magnitude of the velocity field on the left and the phase field on the right.
  For $t=0.1$ we additionally indicate the zero-level line of $\varphi_d$ by a
  black line. Note that the velocity field coincides with $B_Bu_B$ on the
  boundary. }
  \label{fig:num:nRB:phiOpt_lr}
\end{figure}

\begin{figure}
  
  \centering
  \input{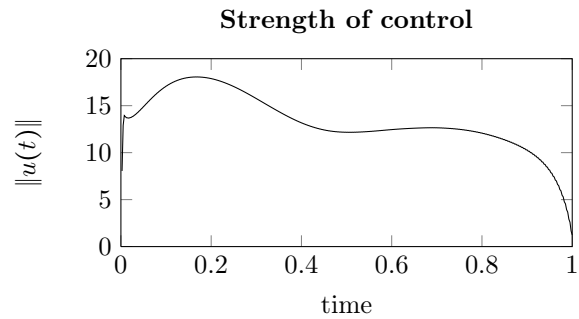}

  \caption{The evolution of the optimal control action over time, i.e.
  $\|u(t)\|$ for the rising bubble example.}
  \label{fig:num:nRB:nu_over_time}
\end{figure}

\subsection{Reconstruction of the initial value}
Finally we investigate an example of finding an initial phase field, such that
after a given amount of time without further control action a desired phase
field is achieved.
Here we apply only initial value control, i.e. $\alpha_V = \alpha_B = 0$, and we
use no-slip boundary conditions for the velocity field.

Let us turn to the representation of $u_I$.
We initialize $u_I$ with a constant value $u_I = -0.8$ and use a homogeneously
refined initial mesh for its representation. We use this mesh for $\mathcal T_1$.

 After each step of
the minimization algorithm we use the jumps accross edges in normal direction of $\nabla u_I$  to construct  a new
grid for the representation of $u_I$ and interpolate the current control
to the new grid. The marking is evaluated based on a D\"orfler approach.

The parameter for this example are given as $\rho_1 = 1000$,
$\rho_2 = 1$, $\eta_1 = 10$,
$\eta_2 = 0.1$, $\sigma = 1.245$ and $g\equiv -0.981$.
These are the parameters of the second benchmark from
\cite{Hysing_Turek_quantitative_benchmark_computations_of_two_dimensional_bubble_dynamics}, where
$\sigma$ was rescaled due to our specific choice of energy.
We note that due to the large
ratio in density, the bubble undergoes strong deformation during rising.
The optimization
horizon again is $I = [0,1.5]$, and $\Omega = (0,1)^2$.
We set $\alpha = 0.2$ and 
 solve the  optimization problem for $\epsilon = 0.02$.

We initialize the optimization with $u_I \equiv -0.8$ 
and use a circle around $M = (0.5,0.6)$ with radius $r = 0.1763040551$ as defined in \eqref{eq:num:c2s:phi0}
as desired shape. These values are used such that $\int_\Omega \varphi_d - u_I
\dx = 0$ is fulfilled.

The optimization problem is solved using the VMPT method, proposed in
\cite{Blank_Rupprecht__projected_gradient_in_Banach_space}. It is an extension of the projected
gradient method to the Banach space setting. In our situation this is
$H^1(\Omega)\cap L^\infty(\Omega)$.

We stop the allover algorithm as soon as $|(DJ_{u_I}(\cdot), v)| < 1e-3 $, where $v$ denotes the
current normalized search direction.
In our example this is reached after 31 iterations, where $J$ is reduced from 3.8e-1 to 1.9e-1, and
especially $\|\varphi^K-\varphi_d\|$ is reduced from 0.43 to 0.16.

In Figure \ref{fig:num:fs:opt} we show the initial shape at the end of the
optimization process, on the left and the corresponding shape at the end of
the optimization time interval together with the zero level line of the desired
shape on the right.

\begin{figure} 
  \centering
  \fbox{
  \includegraphics[width=0.35\textwidth]{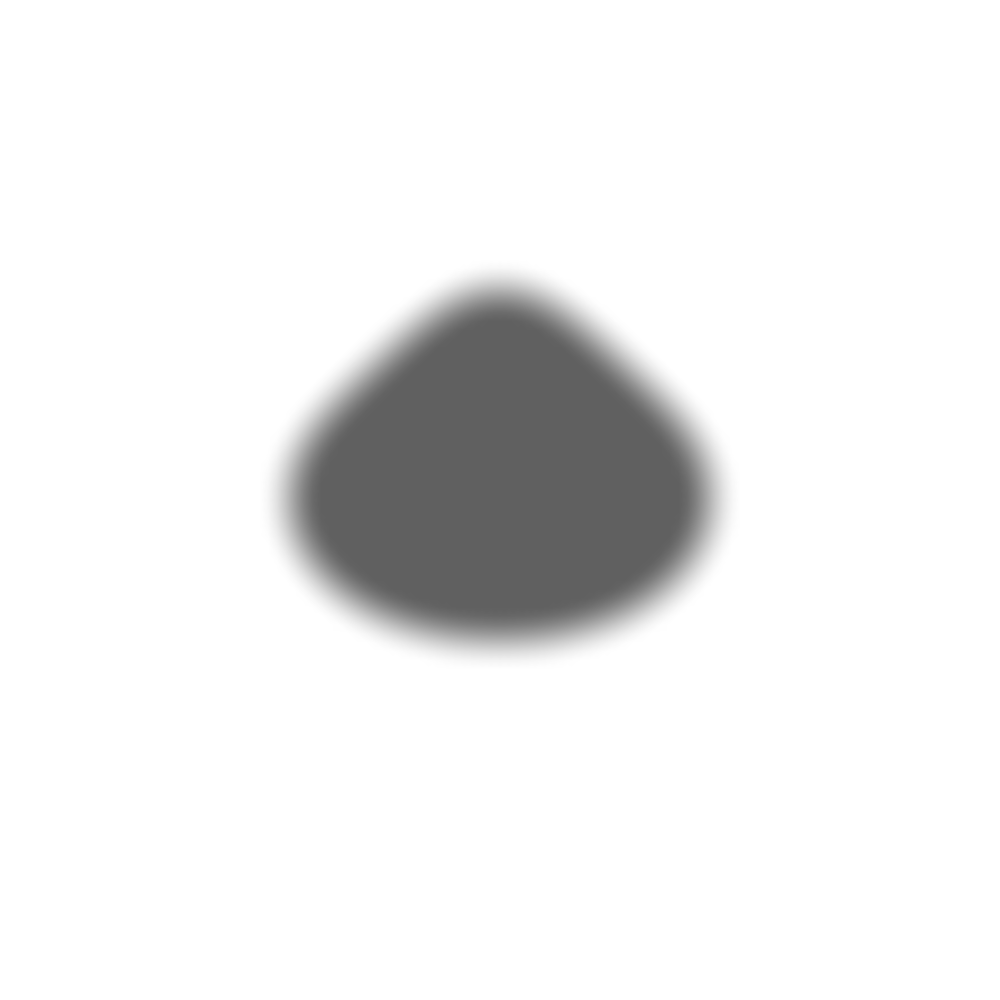}
  }
  \hspace{1em}
  \fbox{
  \includegraphics[width=0.35\textwidth]{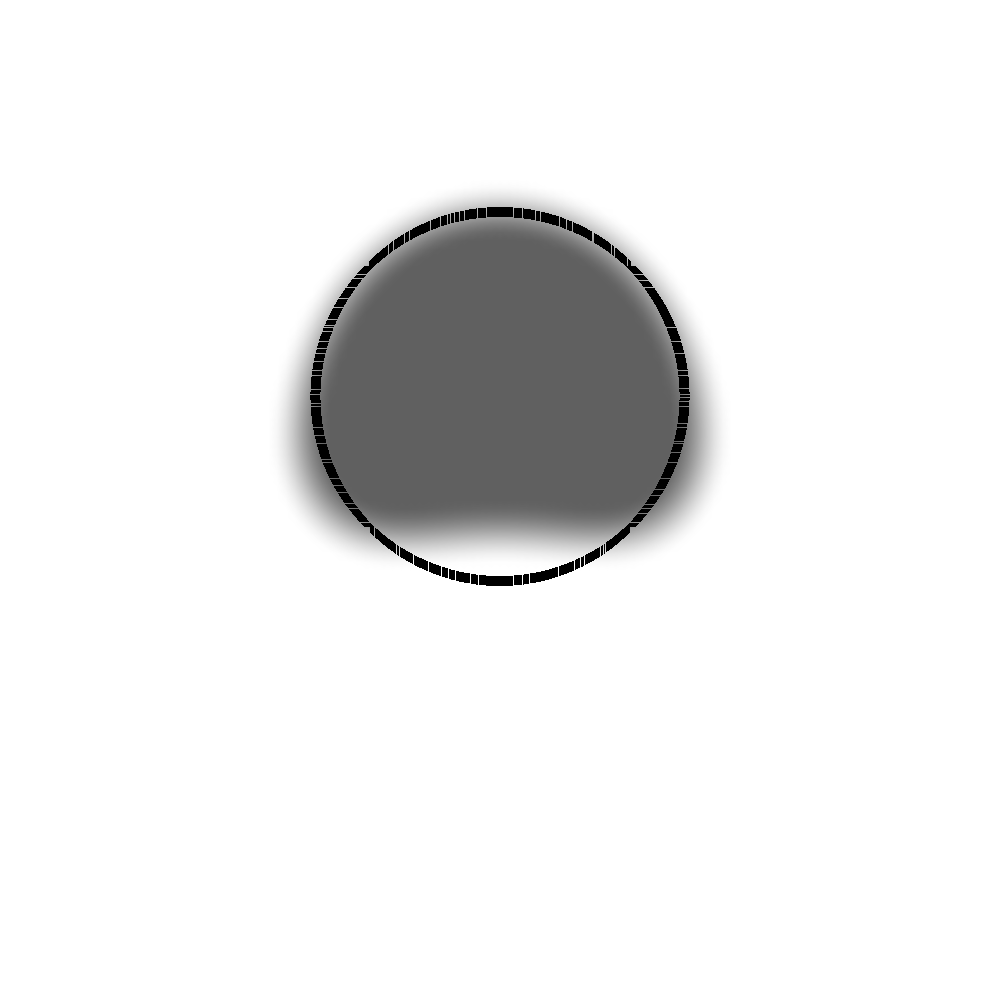}
  }
  \caption{The optimal control $u_I$ (left) and the resulting distribution at
  the end of the time interval (right), where the bubble is shown in gray. The black line indicates
  the zero level line of the desired shape.}
  \label{fig:num:fs:opt}
\end{figure}

\begin{remark}
  In first examples we used an energy for $W_u$ that fulfills
  Assumptions~\ref{ass:psi_C2}--\ref{ass:phi_meanZero} and the method of steepest descent to solve
  the resulting optimization problem. There we only got very slow convergence of the algorithm and
  the resulting optimal $u_I$ had much broader interfaces. So it seems that it is recommended to use
  the non-smoth free energy as we propose here.
\end{remark}

\bibliographystyle{apalike}

\end{document}